\theoremstyle{plain}
\newtheorem{thm}{Theorem}[section]
\newtheorem{cor}[thm]{Corollary}
\newtheorem{lem}[thm]{Lemma}
\newtheorem{prop}[thm]{Proposition}
\theoremstyle{definition}
\theoremstyle{remark}
\newtheorem{rmk}{Remark}
\numberwithin{equation}{section}
\numberwithin{figure}{section}
\numberwithin{table}{section}
\newcommand{\wt}{\operatorname{wt}}
\newcommand{\M}{\operatorname{M}}
\DeclareMathOperator{\AR}{AR}
\begin{document}
\setlength{\baselineskip}{13truept}
\title{Enumeration of Hybrid Domino-Lozenge Tilings}
\author{TRI LAI}
\address{Indiana University\\
Department of Mathematics\\
 Bloomington, IN 47405, USA}
\date{\today}

\begin{abstract}
We solve and generalize an open problem posted by James Propp (Problem 16 in \emph{New Perspectives in Geometric Combinatorics},  Cambridge University Press, 1999) on the number of tilings of  quasi-hexagonal regions on the square lattice with every third diagonal drawn in. We also obtain a generalization of Douglas' Theorem on the number of tilings of a family of regions of the square lattice with every second diagonal drawn in.
\end{abstract}

\maketitle

\section{Introduction}

The field of exact enumeration of tilings (equivalently, perfect matchings) dates back to the early 1900's when MacMahon proved his classical theorem on the number of plane partitions that fit in a given box (see \cite{McM}). This theorem is equivalent to the fact that the number of unit rhombus (or lozenge) tilings of a hexagon $H_{a,b,c}$ of side-lengths $a$, $b$, $c$, $a$, $b$, $c$ (in cyclic order) drawn on the triangular lattice is equal to
\begin{equation}\label{McMahon}
\M(H_{a,b,c})=\prod^{a}_{i=1}\prod^{b}_{j=1}\prod^{c}_{k=1}\frac{i+j+k-1}{i+j+k-2}
\end{equation}
(we use the operator $\M$ to denote both the number of tilings of a lattice region and the number of perfect matchings of a graph, as the two objects can be identified by a well known bijection).

Another classical result, from the early 1960's, is the enumeration of domino tilings of a rectangle on the square lattice, due independently to Kasteleyn \cite{Kas} and Temperley and Fisher \cite{Fish}. This states that the number of domino tilings of a $2m$ by $2n$ rectangle on the square lattice equals
\begin{equation}\label{Kasteleyn}
\M(G_{2m,2n})=2^{2mn}\prod_{j=1}^m\prod_{k=1}^n\bigg(\cos^2\Big(\frac{j\pi}{2m+1}\Big)+\cos^2\Big(\frac{k\pi}{2n+1}\Big)\bigg).
\end{equation}

\begin{figure}\centering
\includegraphics[width=7cm]{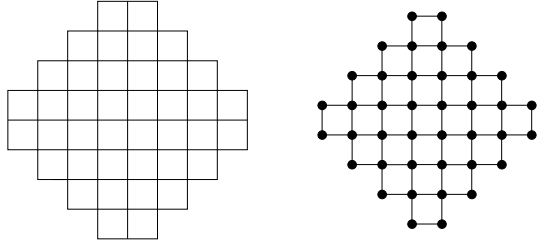}
\caption{The Aztec diamond region (left) and  the Aztec diamond graph (right) of order 4.}
\label{aztecdiamond}
\end{figure}

In the early 1990's, Elkies, Kuperberg, Larsen and Propp \cite{Elkies} considered another family of simple regions on the square lattice called Aztec diamonds (see Figure \ref{aztecdiamond} for an example), and proved that the number of domino tilings of the Aztec diamond of order $n$ is given by the simple formula
\begin{equation}\label{diamond}
\M(AD_n)=2^{n(n+1)/2}.
\end{equation}

A large body of related work followed (see e.g. \cite{Ciucu2},  \cite{Ciucu5}, \cite{Cohn}, \cite{Doug}, \cite{Gessel}, \cite{Yang}, and the references in \cite{Propp} for a more extensive list), centered on families of lattice regions whose tilings are enumerated by simple product formulas. The state of affairs at the end of that decade is captured by Propp's paper \cite{Propp} published in 1999, which presented a list of 32 open problems in the field of enumeration of perfect matchings.

Most of those 32 problems have been solved in the meanwhile, but some are still open. In this paper we solve and generalize one of these open problems (Problem 16 on Propp's list \cite{Propp}). Our methods also provide a new proof and a generalization for a related result of Douglas \cite{Doug}.

\section{Statement of main results}

Problem 16 on Propp's list \cite{Propp} concerns a family of quasi-hexagonal regions on the lattice obtained from the square lattice by drawing in every third southwest-to-northeast diagonal. The case when the side-lengths of the quasi-hexagon are $3$, $2$, $2$, $3$, $2$, $2$ (clockwise from top) is illustrated in Figure \ref{hexagonp}. Problem 16 of \cite{Propp} asks for a formula for the number of tilings of the quasi-hexagon of sides $a$, $b$, $c$, $a$, $b$, $c$ (the sides of length $a$ are the ones along \emph{diagonals}\footnote{From now on, ``diagonal(s)" refers to  ``southwest-to-northeast diagonal(s)".} of the square lattice), where the allowed tiles are unions of two fundamental regions of the resulting dissection of the square lattice sharing an edge. 

\begin{figure}\centering
\begin{picture}(0,0)%
\includegraphics{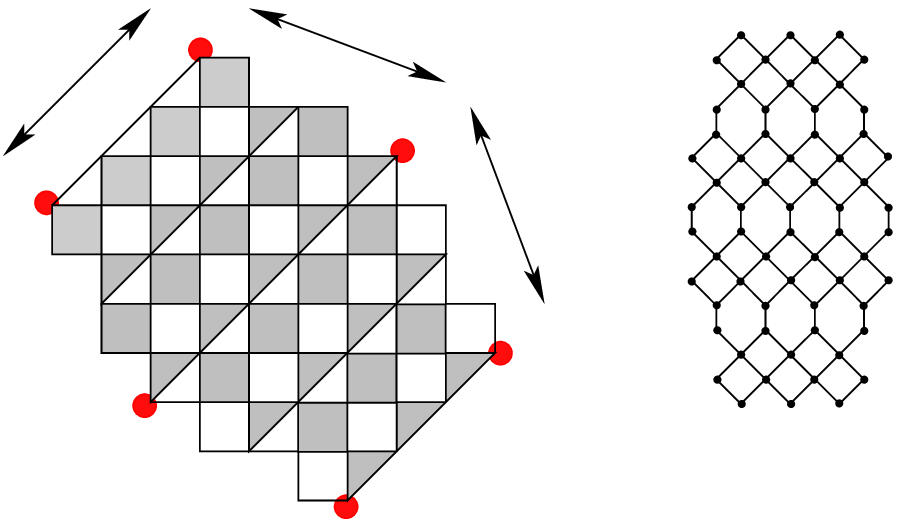}%
\end{picture}%
%
%
\setlength{\unitlength}{3947sp}%
\begingroup\makeatletter\ifx\SetFigFont\undefined%
\gdef\SetFigFont#1#2#3#4#5{%
  \reset@font\fontsize{#1}{#2pt}%
  \fontfamily{#3}\fontseries{#4}\fontshape{#5}%
  \selectfont}%
\fi\endgroup%
\begin{picture}(4289,2665)(222,-1852)
\put(237,485){\makebox(0,0)[lb]{\smash{{\SetFigFont{12}{14.4}{\familydefault}{\mddefault}{\updefault}{$a=3$}%
}}}}
\put(2009,603){\makebox(0,0)[lb]{\smash{{\SetFigFont{12}{14.4}{\familydefault}{\mddefault}{\updefault}{$b=2$}%
}}}}
\put(2836,-224){\makebox(0,0)[lb]{\smash{{\SetFigFont{12}{14.4}{\familydefault}{\mddefault}{\updefault}{$c=2$}%
}}}}
\put(2481,-1523){\makebox(0,0)[lb]{\smash{{\SetFigFont{12}{14.4}{\familydefault}{\mddefault}{\updefault}{$a$}%
}}}}
\put(1182,-1759){\makebox(0,0)[lb]{\smash{{\SetFigFont{12}{14.4}{\familydefault}{\mddefault}{\updefault}{$b$}%
}}}}
\put(237,-933){\makebox(0,0)[lb]{\smash{{\SetFigFont{12}{14.4}{\familydefault}{\mddefault}{\updefault}{$c$}%
}}}}
\end{picture}%
\caption{An example of the quasi-hexagonal regions considered in \cite{Propp}, Problem 16 (left), and its dual graph (right).}
\label{hexagonp}
\end{figure}

As mentioned in \cite{Propp}, the case $a=b=c$ (to which also the cases $a=b< c$ and, by symmetry,  $a=c<b$ turn out to reduce) has been solved by Ben Wieland (in unpublished work), who showed that the number of tilings is given in these situations by powers of 2.

In this paper we prove a simple product formula in the case $b=c$, which was previously open. The result turns out to fully justify Propp's initial motivation, stated in \cite{Propp}: ``One reason for my special interest in Problem 16 is that it seems to be a genuine hybrid of domino tilings of Aztec diamonds and lozenge tilings of hexagons.'' Indeed, we show that by a sequence of graph transformations --- which can also be used to prove the Aztec diamond theorem (\ref{diamond}) --- the dual graph can be transformed into a honeycomb graph whose number of perfect matchings is given by MacMahon's formula (\ref{McMahon}).

We in fact enumerate the tilings of more general regions, which we describe next. Rather than considering quasi-hexagonal regions on the square lattice on which diagonals are drawn in so that the distance\footnote{The unit here is the distance between two consecutive (southwest-to-northeast) diagonals of the square lattice, i.e. $\sqrt{2}/2$.} between any two successive ones is 3, we allow the general situation when the distances between successive diagonals are \emph{arbitrary}. Let $\ell$ be a fixed diagonal (this will contain the middle two vertices of the quasi-hexagon), and assume that $k$ diagonals have been drawn in above it, with the distances between successive ones, starting from top, being $d_1,\dotsc,d_k$ ($d_k$ is the distance between the bottommost of these $k$ diagonals and $\ell$). Assume also that $l$ diagonals have been drawn in below $\ell$,  with the distances between successive ones, starting from the bottom, being $d'_1,\dotsc,d'_l$ ($d'_l$ is the distance between the topmost of these $l$ diagonals and $\ell$).

Given a positive integer $a$, we define the quasi-hexagonal region \[H_a(d_1,\dotsc,d_k;d'_1,\dotsc,d'_l)\] as follows (see Figure \ref{hexagonregion} for an example). Its top and bottom boundaries are along the top and bottom diagonals that have been drawn in. Its southwestern and northeastern boundaries are defined in the next two paragraphs.

Color the resulting dissection of the square lattice black and white so that any two fundamental regions that share an edge have opposite color, and assume that the triangles just below the top diagonal are white. Let $A$ be a lattice point on the top diagonal. Start from $A$ and take unit steps south or east so that for each step the color of the fundamental region on the left is black. We arrive $\ell$  at a lattice point $B$. Continue downward from $B$ in similar fashion, with the one difference that now we require the fundamental region on the left to be white for each step. Let $C$ be the lattice point on the bottom diagonal that is reached by these steps. The described path from $A$ to $C$ is the southwestern boundary of our region.
\begin{figure}\centering
\begin{picture}(0,0)%
\includegraphics{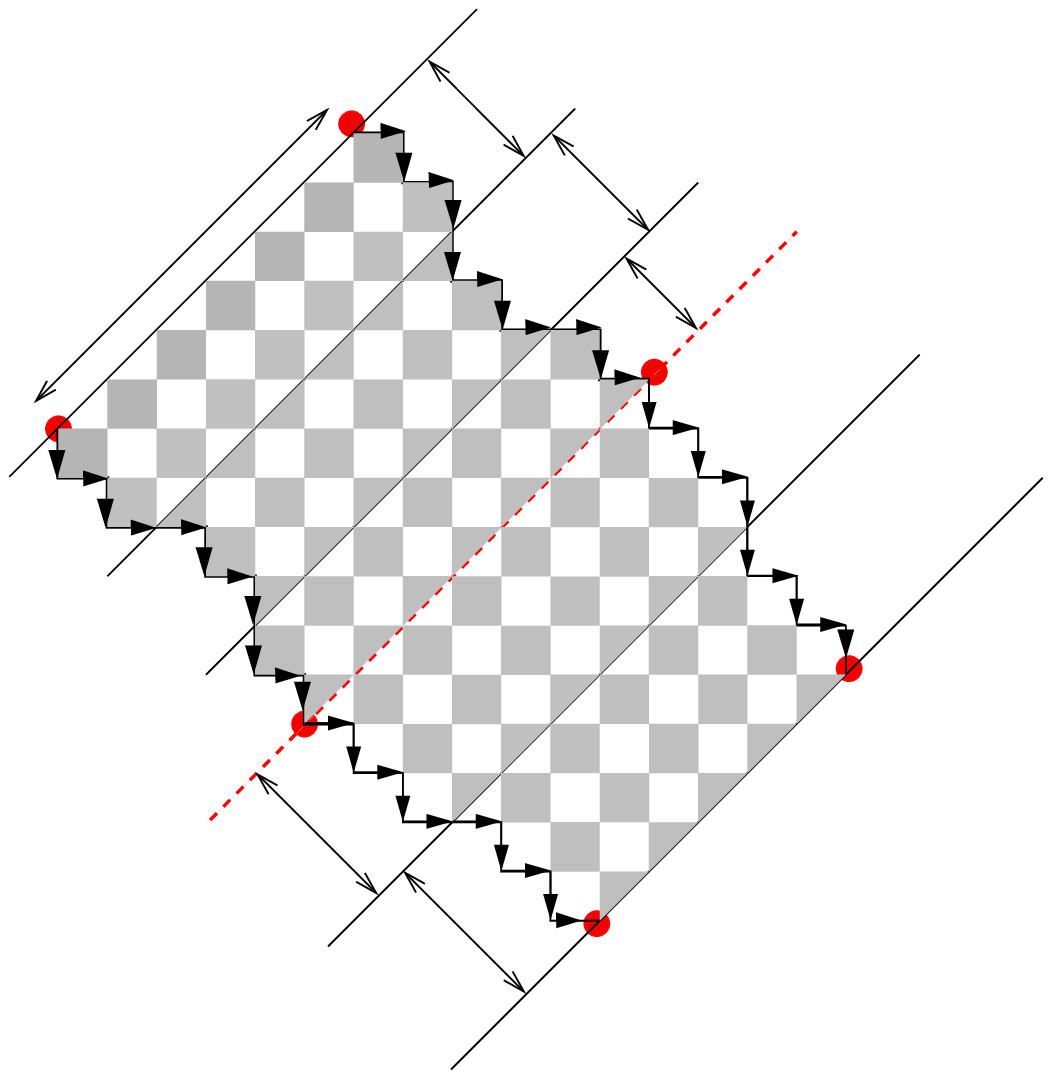}%
\end{picture}%
\setlength{\unitlength}{3947sp}%
\begingroup\makeatletter\ifx\SetFigFont\undefined%
\gdef\SetFigFont#1#2#3#4#5{%
  \reset@font\fontsize{#1}{#2pt}%
  \fontfamily{#3}\fontseries{#4}\fontshape{#5}%
  \selectfont}%
\fi\endgroup%
\begin{picture}(5276,5169)(406,-4616)
\put(421,-1200){\makebox(0,0)[lb]{\smash{{\SetFigFont{12}{14.4}{\familydefault}{\mddefault}{\updefault}{$A$}%
}}}}
\put(1350,-2900){\makebox(0,0)[lb]{\smash{{\SetFigFont{12}{14.4}{\familydefault}{\mddefault}{\updefault}{$B$}%
}}}}
\put(3200,-4141){\makebox(0,0)[lb]{\smash{{\SetFigFont{12}{14.4}{\familydefault}{\mddefault}{\updefault}{$C$}%
}}}}
\put(4500,-2986){\makebox(0,0)[lb]{\smash{{\SetFigFont{12}{14.4}{\familydefault}{\mddefault}{\updefault}{$D$}%
}}}}
\put(1500,0){\makebox(0,0)[lb]{\smash{{\SetFigFont{12}{14.4}{\familydefault}{\mddefault}{\updefault}{$F$}%
}}}}
\put(3900,-1329){\makebox(0,0)[lb]{\smash{{\SetFigFont{12}{14.4}{\familydefault}{\mddefault}{\updefault}{$E$}%
}}}}
\put(1239,-586){\makebox(0,0)[lb]{\smash{{\SetFigFont{12}{14.4}{\familydefault}{\mddefault}{\updefault}{$a$}%
}}}}
\put(4300,-646){\makebox(0,0)[lb]{\smash{{\SetFigFont{12}{14.4}{\familydefault}{\mddefault}{\updefault}{$\ell$}%
}}}}
\put(3025,131){\makebox(0,0)[lb]{\smash{{\SetFigFont{12}{14.4}{\familydefault}{\mddefault}{\updefault}{$d_1$}%
}}}}
\put(3400,-283){\makebox(0,0)[lb]{\smash{{\SetFigFont{12}{14.4}{\familydefault}{\mddefault}{\updefault}{$d_2$}%
}}}}
\put(3500,-706){\makebox(0,0)[lb]{\smash{{\SetFigFont{12}{14.4}{\familydefault}{\mddefault}{\updefault}{$d_3$}%
}}}}
\put(2500,-4231){\makebox(0,0)[lb]{\smash{{\SetFigFont{12}{14.4}{\familydefault}{\mddefault}{\updefault}{$d'_1$}%
}}}}
\put(1800,-3700){\makebox(0,0)[lb]{\smash{{\SetFigFont{12}{14.4}{\familydefault}{\mddefault}{\updefault}{$d'_2$}%
}}}}
\end{picture}%
\caption{The quasi-hexagonal region $H_{6}(4,4,3;\  5,5)$}
\label{hexagonregion}
\end{figure}

The northeastern boundary is defined analogously, starting from the lattice point $F$ on the top diagonal that is $a$ unit square diagonals to the northeast of $A$ (i.e. $|AF|=a\sqrt{2}$). Take unit steps south or east so that all fundamental regions on the left are white, until a point $E$ on $\ell$ is reached, and continue with similar unit steps so that the fundamental regions on the left are black, until a point $D$ on the bottom diagonal is reached. The described path from $F$ to $D$ completes the boundary of $H_a(d_1,\dotsc,d_k;d'_1,\dotsc,d'_l)$. We are only interested in connected regions, so we assume in addition that \textit{the northeastern and the southwestern boundaries do not intersect each other.}

Our generalization of Problem 16 of \cite{Propp} consists of providing an explicit formula for the number of tilings of such regions. We need some additional definitions and terminology as below.

Call the fundamental regions inside $H_a(d_1,\dotsc,d_k;d'_1,\dotsc,d'_l)$ \textit{cells}, and call them black or white according to the coloring described above.  Note that there are two kinds of cells, square and triangular. The latter in turn come in two orientations: they may point towards $\ell$ or away from $\ell$.  A cell is called \textit{regular} if it is a square cell or a triangular cell pointing away from $\ell$.

A \textit{row of cells} consists of all the triangular cells of a given color with bases resting on a fixed lattice diagonal, or  consists of all the square cells (of a given color) passed through by a fixed lattice diagonal.

Define the \textit{upper height} of our region to be the number of rows of regular black cells above $\ell$. The \textit{lower height} is the number of rows of white regular cells below $\ell$ (for example, the quasi-hexagon in Figure \ref{hexagonregion} has both the upper and lower heights equal to 6).

We are now ready to state our main result.
\begin{thm}\label{main}
 Let $a$ be a positive integer, and let $d_1,\dotsc,d_k,d'_1,\dotsc,d'_l$ be positive integers.

$(${\rm a}$)$. $\M(H_a(d_1,\dotsc,d_k;d'_1,\dotsc,d'_l))=0$ unless the upper and lower heights are equal and the bottom row of cells is black.

$(${\rm b}$)$. Suppose that the upper and lower heights of $H_a(d_1,\dotsc,d_k;d'_1,\dotsc,d'_l)$ are both equal to $h$, and that the bottom row of cells is black. Let $C$ be the number of regular black cells above $\ell$, and $C'$ the number of regular white cells below $\ell$.
Then for any positive integer $a$ we have
\begin{align}\label{maineq}
\M(H_a(d_1,\dotsc,d_k;d'_1,\dotsc,d'_l)) &= 2^{C+C'-h(2a+2m-2n-h+1)}\notag\\
& \times
\begin{cases}
\M(H_{h,a+m-n-h,h}), &\text{if $a+m-n\geq h$,}\\
0, & \text{if  $a+m-n < h$,}
\end{cases}
\end{align}
where  $m$ is the number of rows of black triangular cells above $\ell$ pointing away from $\ell$, and $n$ is the number of rows of black triangular cells above $\ell$ pointing towards $\ell$.
\end{thm}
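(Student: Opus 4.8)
The plan is to pass to the dual graph, so that tilings of $H_a(d_1,\dotsc,d_k;d'_1,\dotsc,d'_l)$ become perfect matchings of a planar bipartite graph $G$, with one vertex per cell, two cells joined by an edge exactly when they share the side crossed by an admissible tile, and the bipartition given by the black/white coloring. The square cells contribute vertices of degree four and the triangular cells contribute vertices of degree at most three. Following the hint in the introduction, my overall strategy is to reduce $G$ by a sequence of local replacement moves --- chiefly the \emph{urban renewal} (spider) move, together with the removal of forced edges --- until what remains is the honeycomb graph dual to a lozenge-tiled hexagon, at which point MacMahon's formula (\ref{McMahon}) finishes the count. A single urban renewal applied to a unit-weight square face multiplies the number of matchings by $1\cdot 1+1\cdot 1=2$, and the fractional weights it introduces are cleared by a standard rescaling that again only contributes powers of $2$; hence the accumulated factor is automatically a power of $2$, which is the source of the prefactor in (\ref{maineq}). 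This is precisely the mechanism that yields (\ref{diamond}) for the Aztec diamond.

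For part (a) I would argue by forcing. The zig-zag southwestern and northeastern boundaries each create cells of degree one in $G$, whose incident tiles are forced; deleting a forced edge together with its endpoints exposes new degree-one cells, so the forcing cascades inward row by row. I would show that this cascade can be completed (every cell eventually covered) only when the bottom row of cells is black and the numbers of rows of regular black and white cells on the two sides of $\ell$ agree --- equivalently, when the upper and lower heights are equal. If either condition fails, the cascade strands an uncoverable cell; in bipartite language the two color classes of $G$ then have unequal size, so $\M=0$. The color-balance inequality is the cleanest way to package this.

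For part (b), assume the hypotheses of (a). First I would remove all forced boundary edges; in the unit-weight setting this leaves the count unchanged and trims $G$ to a graph supported on the regular cells. Next I would apply urban renewal to the square cells, processing them diagonal by diagonal as in the Aztec-diamond reduction, each layer contributing a clean power of $2$ and absorbing the square structure into the surrounding triangular (lozenge) structure. After all square cells have been processed I would verify that $G$ has become a honeycomb graph and then identify its shape: the two sides of length $h$ come from the common height, while the middle side length $a+m-n-h$ arises from the horizontal extent $a$ of the top diagonal, corrected by the rows of black triangles pointing away from $\ell$ (contributing $+m$) and toward $\ell$ (contributing $-n$), and then reduced by $h$. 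The inequality $a+m-n\geq h$ is exactly nonnegativity of the middle side length $a+m-n-h$; when it fails the honeycomb admits no perfect matching, giving the second case of (\ref{maineq}). Applying (\ref{McMahon}) to $H_{h,a+m-n-h,h}$ then produces the stated value, up to the tracked power of $2$.

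The main obstacle is the exact bookkeeping. I expect the delicate points to be: (i) proving that the sequence of urban renewals terminates in \emph{precisely} the honeycomb of $H_{h,a+m-n-h,h}$, which requires carefully tracking how each row of square cells and each row of triangular cells is transported through the moves and how the two zig-zag boundaries mesh at $\ell$; and (ii) showing that all the factor-of-$2$ contributions --- from the urban renewals, from the weight renormalizations between layers, and from forced edges that acquire nontrivial weight after earlier moves --- sum to exactly $C+C'-h(2a+2m-2n-h+1)$. Pinning down this exponent is essentially a count of the regular cells processed minus the cells that survive into the final hexagon, and I would organize it by induction on the number of drawn-in diagonals (on $k$ and $l$), reducing the outermost diagonal strip at each step so that the exponent can be matched term by term.
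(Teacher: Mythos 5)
Your overall framework (dual graph, urban renewal, power-of-$2$ bookkeeping, induction on the number of drawn-in diagonals) matches the paper's machinery, but the central step of your part (b) has a genuine gap: you assert that after processing all square cells by urban renewal the graph ``has become a honeycomb graph,'' and that is not what happens. The moves you describe (spider moves plus vertex splitting, rescaling, and forced-edge removal, organized layer by layer --- essentially the paper's Proposition \ref{composite}) keep you inside the family of Aztec-rectangle-like, square-lattice graphs: applied to the upper and lower halves of the quasi-hexagon they reduce the whole dual graph to the connected sum $\overline G:={}_|\AR_{h-\frac12,a+m-n-1}\#\AR_{h-\frac12,a+m-n-1}$, not to a honeycomb. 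Urban renewal turns quadrilateral faces into quadrilateral faces; it never manufactures the hexagonal faces you need, so the verification you defer in your obstacle (i) would fail. The idea you are missing is the paper's ``meet in the middle'' argument: the lozenge hexagon $H_{h,a+m-n-h,h}$ is \emph{itself} a quasi-hexagon (the one with all inter-diagonal distances equal to $1$), so the \emph{same} composite reduction applies to it and produces the \emph{same} intermediate graph $\overline G$, giving $\M(H_{h,a+m-n-h,h})=2^{-h(h-1)}\M(\overline G)$; combining the two reductions yields (\ref{maineq}) without ever having to recognize the reduced quasi-hexagon graph as a honeycomb.

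Two further points. First, your plan gives nothing when $a+m-n\le h$: for $a+m-n<h$ the hexagon $H_{h,a+m-n-h,h}$ does not exist (so one cannot argue that it ``admits no perfect matching''), and for $a+m-n=h$ it degenerates out of the family; the paper settles both cases by applying the Graph Splitting Lemma \ref{graphsplitting} directly to $\overline G$ (its two induced Aztec-rectangle subgraphs force $\M(\overline G)=0$ when $a+m-n<h$, and $\M(\overline G)=2^{h(h-1)}$ when $a+m-n=h$). Second, in part (a) your forced-edge cascade is not developed: the paper disposes of a white bottom row by noting that the bottom vertices cannot all be covered by disjoint edges, and derives the condition ``upper height equals lower height'' from a one-line color count on the \emph{reduced} graph $\overline G$, not on the original graph, where such a count is far messier. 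Finally, the proof must split according to whether the row of triangles resting on $\ell$ is black or white (these require parts (b) and (a) of the composite transformation, respectively, the latter case being reduced to the former by additional $T_1$-type moves); your proposal does not register this case distinction.
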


\begin{rmk}
By MacMahon's formula (\ref{McMahon}) and equality (\ref{maineq}),  the number of tilings of the quasi-hexagon is equal to
\begin{equation}\label{MMreduce}
2^{C+C'-h(2a+2m-2n-h+1)}\prod^{h}_{r=1}\prod^{a+m-n-h}_{s=1}\prod^{h}_{t=1}\frac{r+s+t-1}{r+s+t-2},
\end{equation}
when $a+m-n\geq h$.
\end{rmk}

Note that the original problem corresponds to $d_1=\cdots=d_k=d'_1=\cdots=d'_l=3$ and $k=l$. The resulting region is then precisely the quasi-hexagon of sides $a$, $k$, $k$, $a$, $k$, $k$.  In particular, for $k=l=2$ and $d_1=d_2=d'_1=d'_2=3$, (\ref{maineq}) states that the number of tilings of the quasi-hexagon shown in Figure \ref{hexagonp} is equal to
$2^{8}\M(H_{4,1,4})=2^9\cdot5\cdot7$, thus explaining how this factorization (mentioned explicitly in \cite{Propp}) comes about.

\medskip
In the case of odd $d_i$'s and $d'_j$'s, our quasi-hexagons have the following nice facts: the bottom row of cells is always black, all the black cells above $\ell$ and all the white cells below $\ell$ are regular,  and the upper and lower heights are equal to $\frac{1}{2}\left(k+\sum_{i=1}^k d_i\right)$ and  $\frac{1}{2}\left( l+\sum_{j=1}^l d'_i\right)$, respectively. Therefore, we have a special form of Theorem \ref{maineq} as follows.

\begin{cor}\label{odd-main} Let $d_1,\dotsc,d_k$ and $d'_1,\dotsc,d'_l$ be odd positive integers. Assume that
\begin{equation}\label{balancing}
k+\sum_{i=1}^k d_i = l+\sum_{j=1}^l d'_i=2h.
\end{equation}
Let $C$ be the number of black cells above $\ell$, and $C'$ be the number of white cells below $\ell$. Then for any positive integer $a$ we have
\begin{align}\label{maineqodd}
\M(H_a(d_1,\dotsc,d_k;d'_1,\dotsc,d'_l)) &=2^{C+C'-h(2a+2k-h+1)} \notag\\
&\times
\begin{cases}
 \M(H_{h,a+k-h,h}),& \text{ if $a+k\geq h$,} \\
0,& \text{ if  $a+k < h$.}
\end{cases}
\end{align}
\end{cor}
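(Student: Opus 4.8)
The plan is to derive Corollary \ref{odd-main} as a direct specialization of Theorem \ref{main}. The entire task reduces to verifying, in the all-odd case, the three structural facts asserted just before the corollary, and then reading off the values of $m$, $n$, $C$, $C'$, and the two heights that feed into formula (\ref{maineq}).

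First I would set up a clean row-by-row description of the region above $\ell$. Between two consecutive drawn-in diagonals at distance $d$, the cells organize into a row of triangular cells with base on the upper diagonal and apex pointing towards $\ell$ (a \emph{towards} row), then $d-1$ rows of square cells, and finally a row of triangular cells with base on the lower diagonal and apex pointing away from $\ell$ (an \emph{away} row), for a total of $d+1$ rows per gap. Listing all rows from the top diagonal down to $\ell$, I would establish the \textbf{alternation claim}: any two consecutive rows in this list share a cell edge, so they receive opposite colors. The only nontrivial case is the passage across a drawn-in diagonal, where the \emph{away} row of one gap and the \emph{towards} row of the next both lie on that diagonal and share the hypotenuse edge of each cut square; hence colors still flip. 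By the convention that the triangles just below the top diagonal are white, the colors therefore alternate strictly along the whole list, starting from white.

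The parity count is where oddness is essential, and I expect it to be the crux. The list above $\ell$ has $\sum_{i=1}^k (d_i+1) = k+\sum_i d_i = 2h$ rows, using the balancing condition (\ref{balancing}). Assigning each row a rank $0,1,\dotsc,2h-1$ from the top, the alternation claim makes even ranks white and odd ranks black, so exactly $h$ rows above $\ell$ are black. Since each $d_i$ is odd, each block length $d_i+1$ is even, so the \emph{towards} row opening gap $i$ occupies an even rank (white) and the \emph{away} row closing it an odd rank (black). Thus every \emph{towards} row is white and every \emph{away} row is black, giving $n=0$ and $m=k$. In particular the only non-regular cells above $\ell$, the \emph{towards} triangles, are all white, so every black cell above $\ell$ is regular; consequently the upper height equals the number of black rows, namely $h$, and $C$ is simply the total number of black cells above $\ell$. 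The symmetric argument below $\ell$ (with black and white exchanged, using the $d'_j$) gives lower height $h$, shows that the bottom row is black, and shows that all white cells below $\ell$ are regular so that $C'$ counts all of them.

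Finally I would substitute $m-n=k$, the values of $C$ and $C'$, and the common height $h$ into (\ref{maineq}): the exponent $h(2a+2m-2n-h+1)$ becomes $h(2a+2k-h+1)$, the factor $\M(H_{h,a+m-n-h,h})$ becomes $\M(H_{h,a+k-h,h})$, and the case condition $a+m-n\geq h$ becomes $a+k\geq h$, producing (\ref{maineqodd}) exactly. Aside from the parity argument, I anticipate the remaining obstacle to be purely bookkeeping: making the alternation claim fully rigorous at the boundary diagonals and on $\ell$ itself, and confirming that the rows lying on $\ell$ (the \emph{away} triangles based on $\ell$, which belong above it) are correctly included in the rank count so that the total indeed comes out to $2h$.
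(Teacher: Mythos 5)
Your proposal is correct and follows essentially the same route as the paper: the paper simply asserts the structural facts of the all-odd case (bottom row black, all black cells above $\ell$ and all white cells below $\ell$ regular, both heights equal to $\frac{1}{2}\bigl(k+\sum_i d_i\bigr)$) and then specializes Theorem \ref{main}, which is exactly your substitution $m=k$, $n=0$, upper and lower heights $=h$. The only difference is that you supply the row-alternation and parity argument verifying those facts, which the paper leaves unproved; your verification is sound.
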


Note that when $d_1=\cdots=d_k=d'_1=\cdots=d'_l=3$ in the original quasi-hexagons, the balancing condition (\ref{balancing}) is equivalent to $k=l$.

\medskip

As mentioned in \cite{Propp}, we do not have a simple product formula for the number of tilings of the quasi-hexagon of  Problem 16 in \cite{Propp} when $a$, $b$, $c$  are all distinct. However, we show that our graph transformations can still be used to turn the dual graph of the quasi-hexagon into a conceptually simple graph consisting of a honeycomb part and an Aztec rectangle part, and therefore the number of perfect matchings of the former is equal to a power of $2$ times the number of perfect matchings of the latter (see Theorems \ref{asymodd} and \ref{asymmetric1}).

\begin{figure}\centering
\begin{picture}(0,0)%
\includegraphics{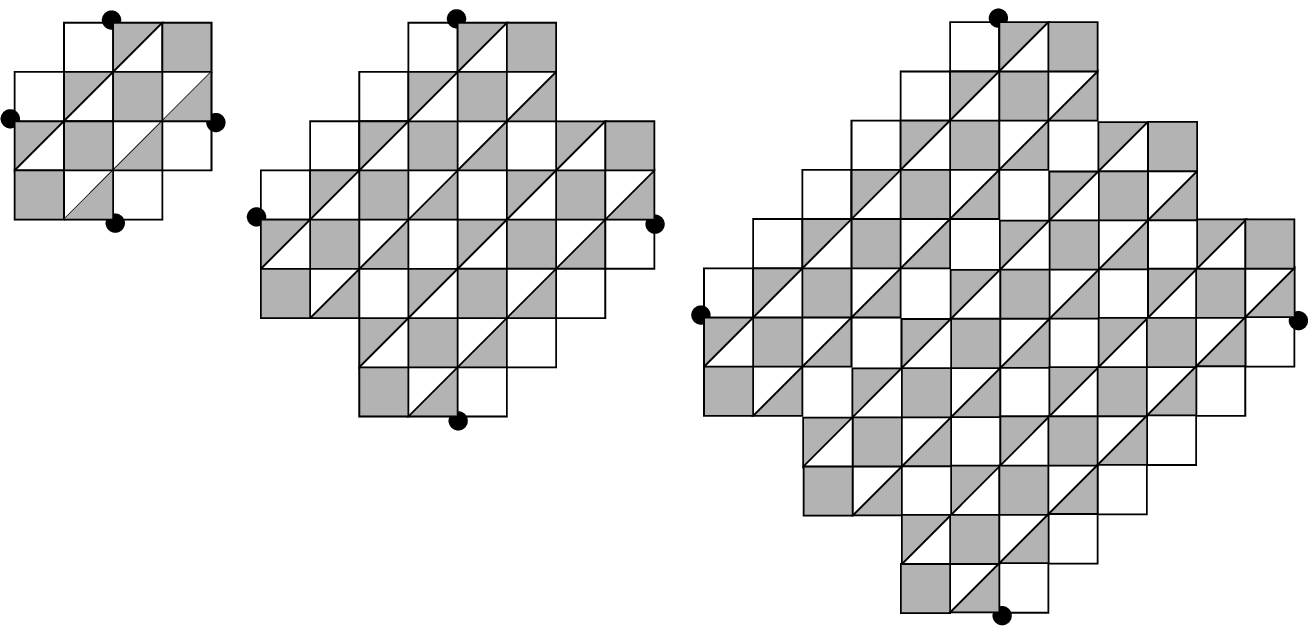}%
\end{picture}%
\setlength{\unitlength}{3947sp}%
\begingroup\makeatletter\ifx\SetFigFont\undefined%
\gdef\SetFigFont#1#2#3#4#5{%
  \reset@font\fontsize{#1}{#2pt}%
  \fontfamily{#3}\fontseries{#4}\fontshape{#5}%
  \selectfont}%
\fi\endgroup%
\begin{picture}(6283,2966)(402,-2292)
\put(2363,-1641){\makebox(0,0)[lb]{\smash{{\SetFigFont{12}{14.4}{\familydefault}{\mddefault}{\updefault}{$n=2$}%
}}}}
\put(6260,-1878){\makebox(0,0)[lb]{\smash{{\SetFigFont{12}{14.4}{\familydefault}{\mddefault}{\updefault}{$n=3$}%
}}}}
\put(828,-696){\makebox(0,0)[lb]{\smash{{\SetFigFont{12}{14.4}{\familydefault}{\mddefault}{\updefault}{$n=1$}%
}}}}
\end{picture}%
\caption{The Douglas' regions of order $n=1$, $n=2$ and $n=3$.}
\label{douglas}
\end{figure}

We also extend a theorem of Douglas \cite{Doug} concerning a particular family of regions on the lattice obtained from $\mathbb{Z}^2$ by drawing in every  \textit{second} diagonal (see Figure \ref{douglas}), to the case of arbitrary distances between the diagonals. Let $D_a(d_1,\dotsc,d_k)$ be the portion of $H_a(d_1,\dotsc,d_k;d'_1,\dotsc,d'_l)$ that is above the diagonal $\ell$ (i.e. the top half of the generalized quasi-hexagon); we call it a \textit{generalized Douglas region}. The \textit{height} of
$D_a(d_1,\dotsc,d_k)$ is defined to be equal to the upper height of $H_a(d_1,\dotsc,d_k;d'_1,\dotsc,d'_l)$. A triangle pointing towards $\ell$ in $D_a(d_1,\dotsc,d_k)$ is called a \textit{down-pointing triangle}, and a triangle pointing away from $\ell$ in this region is called a \textit{up-pointing triangle}.

Our extension of Douglas' result is the following.

\begin{thm}\label{douggen} Let $a$ be a positive integer, and let $d_1,\dotsc,d_k$ be positive integers. Let $h$ be the height of $D_a(d_1,\dotsc,d_k)$, $m$ (resp., $n$) be the number of rows of up-pointing (resp., down-pointing) triangular black cells in $D_a(d_1,\dotsc,d_k)$, and let $C$ be the number of black cells that are not down-pointing triangles.

$(${\rm a}$)$. If the the bottom row of cells of $D_a(d_1,\dotsc,d_k)$ is black, or if the equality
\begin{equation}\label{balancing2}
h=a+m-n
\end{equation}
fails, then $\M(D_a(d_1,\dotsc,d_k))=0$.

$(${\rm b}$)$.  Suppose (\ref{balancing2}) holds and the bottom row of cells of $D_a(d_1,\dotsc,d_k)$ is white. Then we have
\begin{equation}\label{douggeneq}
\M(D_a(d_1,\dotsc,d_k))
=
2^{C-h(h+1)} \M(AD_h)\notag\\
=
2^{C-h(h+1)/2}.
\end{equation}
\end{thm}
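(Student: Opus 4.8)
The plan is to work with the planar dual graph $G$ of $D_a(d_1,\dots,d_k)$, whose vertices are the cells (colored black and white as in the region) and whose edges join two cells sharing a side; perfect matchings of $G$ are exactly the tilings of the region, and $G$ is bipartite with the two color classes as its parts. All of the work then takes place on $G$, and the target is to reduce $G$ to the Aztec diamond graph $AD_h$, after which the Aztec diamond theorem (\ref{diamond}) finishes the computation.

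For part (a) I would argue by the bipartite balance condition together with boundary forcing: a bipartite graph whose two color classes have different sizes admits no perfect matching, so $\M(G)=0$ whenever the number of black cells differs from the number of white cells. Counting cells row by row — each drawn-in diagonal contributes a predictable surplus depending on $d_i$ and on whether its triangles point toward or away from $\ell$ — I would show that the two hypotheses of part (a) are precisely the obstructions to this balance: if the bottom row is black, the lowest cells produce an unavoidable color imbalance (equivalently, forced edges along $\ell$ cannot be completed), while the equality $h=a+m-n$ of (\ref{balancing2}) is exactly the condition under which the total black and white counts agree. Either failing forces $\M(D_a(d_1,\dots,d_k))=0$.

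For part (b), the heart is a sequence of matching-preserving local transformations of the same kind announced in the introduction for Theorem \ref{main}, but now terminating at an Aztec diamond rather than a honeycomb. First I would strip off forced edges: along the slanted northeastern and southwestern boundaries, and at the up-pointing triangular cells, there are degree-one vertices whose unique incident edge lies in every matching, and deleting such an edge together with its two endpoints leaves $\M$ unchanged. Iterating these forced reductions peels away the triangular (lozenge-type) structure introduced by the drawn-in diagonals. I would then apply the standard moves — series reduction of degree-two vertices, which preserves $\M$, and, crucially, urban renewal (the ``spider move'') at the square cells, each such move contributing a factor of $2$ to $\M$ in the unweighted setting. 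The balancing equality (\ref{balancing2}) guarantees that, once all triangular structure has been absorbed, the graph that remains is exactly $AD_h$. Keeping a running tally of the collected factors, I would show that the passage from $G$ to $AD_h$ multiplies the matching count by $2^{C-h(h+1)}$, where $C$ counts the regular (non–down-pointing) black cells, giving
\begin{equation*}
\M(D_a(d_1,\dots,d_k)) = 2^{C-h(h+1)}\,\M(AD_h);
\end{equation*}
substituting $\M(AD_h)=2^{h(h+1)/2}$ from (\ref{diamond}) then yields the closed form $2^{C-h(h+1)/2}$ claimed in (\ref{douggeneq}).

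The main obstacle I anticipate is the bookkeeping under arbitrary diagonal spacings $d_1,\dots,d_k$: because the local picture near each drawn-in diagonal depends on $d_i$ and on the orientation of its triangles, I must verify that the forced reductions and urban renewal moves apply with the correct preconditions at every stage, that no cell is counted twice, and that the accumulated exponent is exactly $C-h(h+1)$ for every admissible configuration. Equally delicate is proving that the terminal graph is precisely $AD_h$ — of the correct order $h$ and with no residual defects — which is where the balancing condition (\ref{balancing2}) must be invoked in an essential way.
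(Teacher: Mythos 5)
Your overall strategy---reduce the dual graph to $AD_h$ by forced-edge removal, vertex splitting/contraction, and urban renewal, keeping a tally of factors of $2$---is indeed the strategy of the paper, but your proposal has a genuine gap: the entire content of the theorem is the bookkeeping that you defer to your last paragraph, and you give no mechanism for carrying it out. The paper resolves exactly this difficulty with an induction on the number of layers (Proposition \ref{composite}, the Composite Transformation): the top layer of the dual graph is an Aztec rectangle $\AR_{p,a}$ when $d_1$ is even, or a baseless Aztec rectangle $\AR_{p-\frac12,a}$ when $d_1$ is odd; Lemma \ref{T1} (resp.\ Lemma \ref{T2}) replaces that layer wholesale, producing a clean factor $2^{p}$ (resp.\ $2^{-p}$) and leaving the dual graph of a \emph{new} generalized Douglas region, $D_{a-1}(d_1+d_2-1,d_3,\dotsc,d_k)$ (resp.\ $D_{a+1}(d_1+d_2+1,d_3,\dotsc,d_k)$), with one fewer layer. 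One then tracks how $C$, $h$, $m$, $n$, $a$ change under this single replacement (e.g.\ $C-C'=\pm p$, $h'=h$) and invokes the induction hypothesis. It is this inductive organization that proves, uniformly in arbitrary $d_1,\dotsc,d_k$, that the terminal graph is $\AR_{h,a+m-n}$ (hence $AD_h$ exactly when (\ref{balancing2}) holds, and an unbalanced Aztec rectangle or a baseless Aztec rectangle---both matchingless---otherwise), and that the accumulated exponent is exactly $C-h(a+m-n+1)$. Without such a mechanism, ``verify that the moves apply at every stage and that the exponent comes out right for every admissible configuration'' is the statement to be proved, not a step of a proof.

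Two further concrete points. First, your accounting ``each urban renewal contributes a factor of $2$ in the unweighted setting'' fails after the first round: the spider move (Lemma \ref{spider}) leaves edges of weight $1/2$, so all subsequent moves act on a weighted graph, and the total factor is \emph{not} $2^{\#\text{moves}}$. The paper repairs this with the Star Lemma (Lemma \ref{star}), rescaling the affected vertices by $t=2$; per layer the net factor is $2^{pq}\cdot 2^{-p(q-1)}=2^{p}$, and this rescaling is where a substantial part of the exponent $C-h(h+1)$ comes from. Your tally, as stated, would come out wrong. Second, for part (a) your color-count claim is correct in substance (all the moves preserve the difference of the two color-class sizes, and one can check the imbalance equals $h$ when the bottom row is black, and $|a+m-n-h|$ when it is white), but note that the paper gets part (a) for free from the same composite transformation: the black-bottom case lands on a baseless Aztec rectangle, whose $q+1$ bottom vertices have all their neighbors among the $q$ vertices of the row above it, so no perfect matching exists; a separate row-by-row census of the region---which would itself require a layer induction---is unnecessary.
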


Douglas' theorem corresponds to the special case when $k$ is odd and $d_1=d_k=1$, $d_2=\cdots=d_{k-1}=2$, when the balancing condition (\ref{balancing2}) requires $a=k-1$ (Strictly speaking, in Douglas' original definition the top and bottom rows of cells consist of squares rather than triangles; however, the dual graph is isomorphic to the dual graph of our region, so the two versions have the same number of tilings). Thus, one can view Theorem \ref{main} as a common generalization of Douglas' theorem \cite{Doug} and Propp's open problem on quasi-hexagons.

\section{Preliminaries}

A \textit{perfect matching} of a graph $G$ is a collection of edges such that each vertex of $G$ is incident to precisely one edge in the collection. The number of perfect matchings of $G$ is denoted by $\M(G)$. More generally, if the edges of $G$ have weights on them, $\M(G)$ denotes the sum of the weights of all perfect matchings of $G$, where the weight of a perfect matching is the product of the weights on its constituent edges.

Given a lattice in the plane, a (lattice) \textit{region} is a finite connected union of fundamental regions of that lattice. A \textit{tile} is the union of any two fundamental regions sharing an edge. A \textit{tiling} of the region $R$ is a covering of $R$ by tiles with no gaps or overlaps. The tilings of a region $R$ can be naturally identified with the perfect matchings of its dual graph (i.e., the graph whose vertices are the fundamental regions of $R$, and whose edges connect two fundamental regions precisely when they share an edge). In view of this, we denote the number of tilings of $R$ by $\M(R)$.

A \textit{forced edge} of a graph is an edge contained in every perfect matching of $G$. Assume that $G$ is a weighted graph with weight function $\wt$ on its edges, and $G'$ is obtained from $G$ by removing forced edges $e_1,\dotsc,e_k$, and removing the vertices incident to those edges. Then one clearly has
\begin{equation*}
\M(G)=\M(G')\prod_{i=1}^k\wt(e_i).
\end{equation*}
\textit{Hereafter, whenever we remove some forced edges, we remove also the vertices incident to them.}

The main results of this section are two graph transformation rules that change the number of perfect matchings in a simple predictable way (see Lemmas 3.4 and 3.5). In proving them we will employ three basic preliminary results stated below.

\begin{figure}\centering
\begin{picture}(0,0)%
\includegraphics{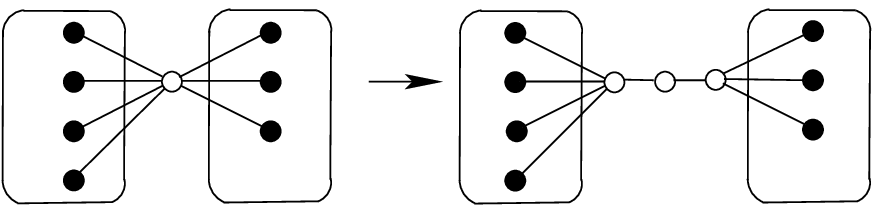}%
\end{picture}%
%
%
\setlength{\unitlength}{3947sp}%
\begingroup\makeatletter\ifx\SetFigFont\undefined%
\gdef\SetFigFont#1#2#3#4#5{%
  \reset@font\fontsize{#1}{#2pt}%
  \fontfamily{#3}\fontseries{#4}\fontshape{#5}%
  \selectfont}%
\fi\endgroup%
\begin{picture}(4188,1361)(593,-556)
\put(1336,591){\makebox(0,0)[lb]{\smash{{\SetFigFont{10}{12.0}{\familydefault}{\mddefault}{\updefault}{$v$}%
}}}}
\put(3549,621){\makebox(0,0)[lb]{\smash{{\SetFigFont{10}{12.0}{\familydefault}{\mddefault}{\updefault}{$v'$}%
}}}}
\put(3757, 89){\makebox(0,0)[lb]{\smash{{\SetFigFont{10}{12.0}{\familydefault}{\mddefault}{\updefault}{$x$}%
}}}}
\put(3999,621){\makebox(0,0)[lb]{\smash{{\SetFigFont{10}{12.0}{\familydefault}{\mddefault}{\updefault}{$v''$}%
}}}}
\put(820,-541){\makebox(0,0)[lb]{\smash{{\SetFigFont{10}{12.0}{\familydefault}{\mddefault}{\updefault}{$H$}%
}}}}
\put(1840,-535){\makebox(0,0)[lb]{\smash{{\SetFigFont{10}{12.0}{\familydefault}{\mddefault}{\updefault}{$K$}%
}}}}
\put(3031,-535){\makebox(0,0)[lb]{\smash{{\SetFigFont{10}{12.0}{\familydefault}{\mddefault}{\updefault}{$H$}%
}}}}
\put(4426,-484){\makebox(0,0)[lb]{\smash{{\SetFigFont{10}{12.0}{\familydefault}{\mddefault}{\updefault}{$K$}%
}}}}
\end{picture}%
\caption{}
\label{vertexsplitting}
\end{figure}
%
%

\begin{lem} [Vertex-Splitting Lemma]\label{VS}
 Let $G$ be a graph, $v$ be a vertex of it, and denote the set of neighbors of $v$ by $N(v)$.
  For any disjoint union $N(v)=H\cup K$, let $G'$ be the graph obtained from $G\setminus v$ by including three new vertices $v'$, $v''$ and $x$ so that $N(v')=H\cup \{x\}$, $N(v'')=K\cup\{x\}$, and $N(x)=\{v',v''\}$ (see Figure \ref{vertexsplitting}). Then $\M(G)=\M(G')$.
\end{lem}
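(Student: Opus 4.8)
The plan is to exhibit a weight-preserving bijection between the perfect matchings of $G$ and those of $G'$, using the fact that the newly added vertex $x$ has degree $2$ to pin down the local structure of every matching of $G'$. First I would fix weights on $G'$ so that the claim holds even in the weighted setting: give the two new edges $v'x$ and $v''x$ weight $1$, let each edge joining $v'$ to a vertex $h\in H$ inherit the weight $\wt(vh)$, and likewise let each edge $v''k$ with $k\in K$ inherit the weight $\wt(vk)$, while all edges lying entirely in $G\setminus v$ keep their original weights.

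The key observation is that in any perfect matching $M'$ of $G'$ the vertex $x$, whose only neighbors are $v'$ and $v''$, must be matched either to $v'$ or to $v''$. Suppose $x$ is matched to $v'$. Then $v'$ is saturated, so no edge from $v'$ into $H$ is used; meanwhile $v''$ must be matched, and its only remaining neighbors lie in $K$, so $v''$ is matched to some $k\in K$. Symmetrically, if $x$ is matched to $v''$, then $v'$ is matched to some $h\in H$ and no edge from $v''$ into $K$ is used. In either case, after deleting $v',v'',x$ the surviving edges form a matching of $G\setminus v$ saturating every old vertex except the single one ($k$ or $h$) that was matched across the split.

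I would then define the correspondence explicitly. Given a perfect matching $M$ of $G$, the vertex $v$ is matched to a unique neighbor $w\in N(v)=H\cup K$. If $w\in K$, I map $M$ to the matching of $G'$ obtained by replacing the edge $vw$ with the two edges $v''w$ and $v'x$, keeping all other edges; if $w\in H$, I replace $vw$ by $v'w$ and $v''x$ instead. Conversely, given a matching $M'$ of $G'$, I use the dichotomy above: if $x$ is matched to $v'$ (so $v''$ is matched to some $k$), delete $v'x$ and $v''k$ and insert $vk$; if $x$ is matched to $v''$ (so $v'$ is matched to some $h$), delete $v''x$ and $v'h$ and insert $vh$. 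Since $H$ and $K$ are disjoint with union exactly $N(v)$, these two operations are mutually inverse, so they constitute a bijection; moreover each step replaces one edge by edges of the same total weight (the weight-$1$ edge $v'x$ or $v''x$ contributes nothing, and the split edge inherits the original weight), so the bijection is weight-preserving and hence $\M(G)=\M(G')$.

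This argument is essentially routine; the only point requiring genuine care is verifying that the degree-$2$ vertex $x$ really forces the clean dichotomy — that once the partner of $x$ is chosen, the matchings of $v'$ and $v''$ are completely determined up to the choice of partner in $H$ or $K$ — and that the disjointness of $H$ and $K$ guarantees both the map and its inverse land in the correct graph. I expect no obstacle beyond this bookkeeping.
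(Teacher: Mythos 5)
Your proof is correct. The paper itself states the Vertex-Splitting Lemma without proof, quoting it as one of three basic preliminary results from the literature (it is due to Ciucu), and your argument---using the degree-$2$ vertex $x$ to force the dichotomy and building the explicit weight-preserving bijection that trades the edge $vw$ for the pair $\{v'w, v''x\}$ or $\{v''w, v'x\}$ according to whether $w\in H$ or $w\in K$---is precisely the standard proof of this fact, including the correct handling of the weighted case needed for the paper's later applications.
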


\begin{lem}[Star Lemma]\label{star}
Let $G$ be a weighted graph, and let $v$ be a vertex of~$G$. Let $G'$ be the graph obtained from $G$ by multiplying the weights of all edges incident to $v$ by $t>0$. Then $\M(G')=t\M(G)$.
\end{lem}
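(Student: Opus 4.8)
The plan is to exploit the fact that $G$ and $G'$ have \emph{exactly the same underlying graph}: the operation in the statement alters only the weights of the edges incident to $v$, not which edges or vertices are present. Consequently the perfect matchings of $G'$ are literally the same collections of edges as the perfect matchings of $G$, and it suffices to compare, matching by matching, the weight each one contributes in $G'$ against the weight it contributes in $G$.

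First I would recall that in any perfect matching $M$ the vertex $v$ is incident to precisely one edge of $M$; call it $e_M$. This edge is among those whose weight was scaled, so $\wt_{G'}(e_M)=t\,\wt_{G}(e_M)$, while every other edge of $M$ retains its original weight (the only edges whose weights changed are the ones incident to $v$, and $M$ contains exactly one such edge). Since the weight of a matching is the product of the weights of its edges, this gives $\wt_{G'}(M)=t\,\wt_{G}(M)$ for every perfect matching $M$. Summing this identity over all perfect matchings of the common graph, and using $\M(G)=\sum_{M}\wt_{G}(M)$ together with $\M(G')=\sum_{M}\wt_{G'}(M)$, yields $\M(G')=t\,\M(G)$ at once.

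The only point requiring any care — and it is the crux, albeit a very mild one — is the observation that a perfect matching meets $v$ in exactly one edge, which is precisely the defining property of a perfect matching and so needs no separate argument. I therefore do not expect any genuine obstacle; the entire content of the lemma is the bookkeeping that exactly one scaled edge appears in each matching, contributing a single factor of $t$.
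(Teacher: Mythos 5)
Your proof is correct: each perfect matching uses exactly one edge incident to $v$, so its weight picks up exactly one factor of $t$, and summing over matchings gives $\M(G')=t\M(G)$. The paper states this lemma without proof, treating it as a standard preliminary fact, and your argument is precisely the canonical one-line justification it implicitly relies on.
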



Part (a) of the following result is a generalization (due to Propp) of the ``urban renewal" trick first observed by Kuperberg. Parts (b) and (c) are due to Ciucu (see Lemma 2.6 in [5]).

\begin{figure}\centering
\begin{picture}(0,0)%
\includegraphics{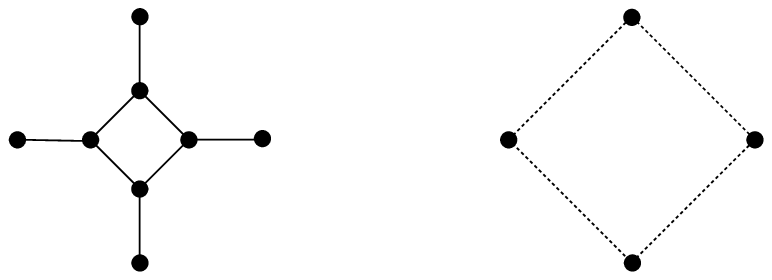}%
\end{picture}%
\setlength{\unitlength}{3947sp}%
\begingroup\makeatletter\ifx\SetFigFont\undefined%
\gdef\SetFigFont#1#2#3#4#5{%
  \reset@font\fontsize{#1}{#2pt}%
  \fontfamily{#3}\fontseries{#4}\fontshape{#5}%
  \selectfont}%
\fi\endgroup%
\begin{picture}(4054,1735)(340,-948)
\put(355,-156){\makebox(0,0)[lb]{\smash{{\SetFigFont{10}{12.0}{\familydefault}{\mddefault}{\updefault}{$A$}%
}}}}
\put(1064,-933){\makebox(0,0)[lb]{\smash{{\SetFigFont{10}{12.0}{\familydefault}{\mddefault}{\updefault}{$B$}%
}}}}
\put(1891,-106){\makebox(0,0)[lb]{\smash{{\SetFigFont{10}{12.0}{\familydefault}{\mddefault}{\updefault}{$C$}%
}}}}
\put(1182,603){\makebox(0,0)[lb]{\smash{{\SetFigFont{10}{12.0}{\familydefault}{\mddefault}{\updefault}{$D$}%
}}}}
\put(2717,-189){\makebox(0,0)[lb]{\smash{{\SetFigFont{10}{12.0}{\familydefault}{\mddefault}{\updefault}{$A$}%
}}}}
\put(3426,-933){\makebox(0,0)[lb]{\smash{{\SetFigFont{10}{12.0}{\familydefault}{\mddefault}{\updefault}{$B$}%
}}}}
\put(4253,-106){\makebox(0,0)[lb]{\smash{{\SetFigFont{10}{12.0}{\familydefault}{\mddefault}{\updefault}{$C$}%
}}}}
\put(3426,603){\makebox(0,0)[lb]{\smash{{\SetFigFont{10}{12.0}{\familydefault}{\mddefault}{\updefault}{$D$}%
}}}}
\put(904,-382){\makebox(0,0)[lb]{\smash{{\SetFigFont{10}{12.0}{\familydefault}{\mddefault}{\updefault}{$x$}%
}}}}
\put(1396,-388){\makebox(0,0)[lb]{\smash{{\SetFigFont{10}{12.0}{\familydefault}{\mddefault}{\updefault}{$y$}%
}}}}
\put(1418,130){\makebox(0,0)[lb]{\smash{{\SetFigFont{10}{12.0}{\familydefault}{\mddefault}{\updefault}{$z$}%
}}}}
\put(946,130){\makebox(0,0)[lb]{\smash{{\SetFigFont{10}{12.0}{\familydefault}{\mddefault}{\updefault}{$t$}%
}}}}
\put(2968,284){\makebox(0,0)[lb]{\smash{{\SetFigFont{10}{12.0}{\familydefault}{\mddefault}{\updefault}{$y/\Delta$}%
}}}}
\put(3934,311){\makebox(0,0)[lb]{\smash{{\SetFigFont{10}{12.0}{\familydefault}{\mddefault}{\updefault}{$x/\Delta$}%
}}}}
\put(3964,-544){\makebox(0,0)[lb]{\smash{{\SetFigFont{10}{12.0}{\familydefault}{\mddefault}{\updefault}{$t/\Delta$}%
}}}}
\put(2965,-526){\makebox(0,0)[lb]{\smash{{\SetFigFont{10}{12.0}{\familydefault}{\mddefault}{\updefault}{$z/\Delta$}%
}}}}
\put(2197,-817){\makebox(0,0)[lb]{\smash{{\SetFigFont{10}{12.0}{\familydefault}{\mddefault}{\updefault}{$\Delta= xz+yt$}%
}}}}
\end{picture}%
\caption{}
\label{spider1}
\end{figure}


\begin{figure}\centering
\resizebox{!}{3.7cm}{
\begin{picture}(0,0)%
\includegraphics{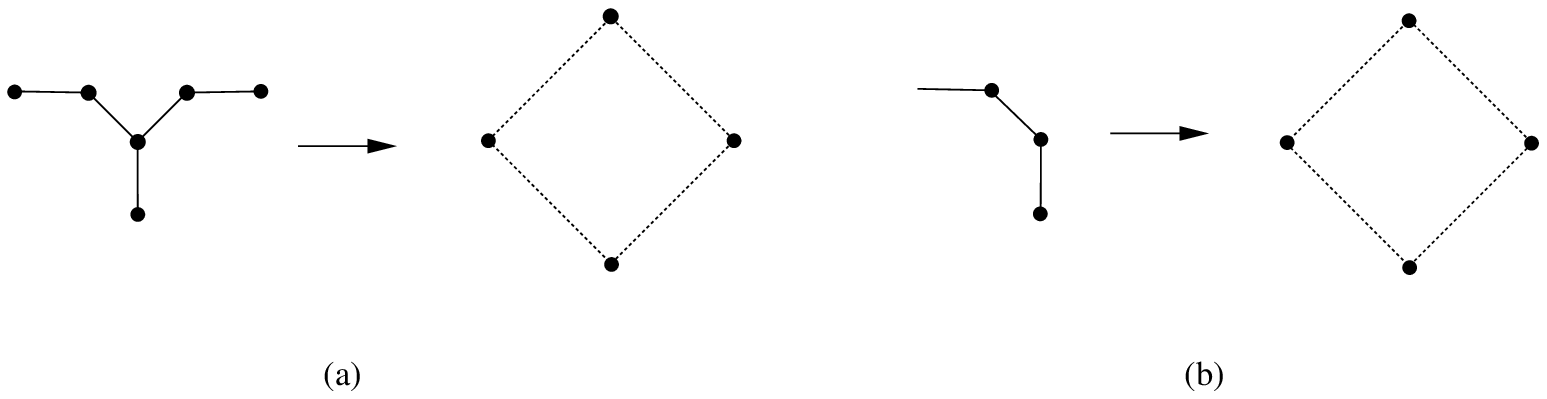}%
\end{picture}%
\setlength{\unitlength}{3947sp}%
\begingroup\makeatletter\ifx\SetFigFont\undefined%
\gdef\SetFigFont#1#2#3#4#5{%
  \reset@font\fontsize{#1}{#2pt}%
  \fontfamily{#3}\fontseries{#4}\fontshape{#5}%
  \selectfont}%
\fi\endgroup%
\begin{picture}(7605,2136)(222,-1352)
\put(237,-106){\makebox(0,0)[lb]{\smash{{\SetFigFont{10}{12.0}{\familydefault}{\mddefault}{\updefault}{$A$}%
}}}}
\put(828,-697){\makebox(0,0)[lb]{\smash{{\SetFigFont{10}{12.0}{\familydefault}{\mddefault}{\updefault}{$B$}%
}}}}
\put(1592,  0){\makebox(0,0)[lb]{\smash{{\SetFigFont{10}{12.0}{\familydefault}{\mddefault}{\updefault}{$C$}%
}}}}
\put(519,-249){\makebox(0,0)[lb]{\smash{{\SetFigFont{10}{12.0}{\familydefault}{\mddefault}{\updefault}{$x$}%
}}}}
\put(1164,-264){\makebox(0,0)[lb]{\smash{{\SetFigFont{10}{12.0}{\familydefault}{\mddefault}{\updefault}{$y$}%
}}}}
\put(2566,426){\makebox(0,0)[lb]{\smash{{\SetFigFont{10}{12.0}{\familydefault}{\mddefault}{\updefault}{$y/2$}%
}}}}
\put(3616,396){\makebox(0,0)[lb]{\smash{{\SetFigFont{10}{12.0}{\familydefault}{\mddefault}{\updefault}{$x/2$}%
}}}}
\put(2244,-714){\makebox(0,0)[lb]{\smash{{\SetFigFont{10}{12.0}{\rmdefault}{\mddefault}{\updefault}{$1/(2x)$}%
}}}}
\put(3646,-744){\makebox(0,0)[lb]{\smash{{\SetFigFont{10}{12.0}{\rmdefault}{\mddefault}{\updefault}{$1/(2y)$}%
}}}}
\put(2363,-129){\makebox(0,0)[lb]{\smash{{\SetFigFont{10}{12.0}{\familydefault}{\mddefault}{\updefault}{$A$}%
}}}}
\put(3131,600){\makebox(0,0)[lb]{\smash{{\SetFigFont{10}{12.0}{\familydefault}{\mddefault}{\updefault}{$D$}%
}}}}
\put(3830,-174){\makebox(0,0)[lb]{\smash{{\SetFigFont{10}{12.0}{\rmdefault}{\mddefault}{\updefault}{$C$}%
}}}}
\put(3119,-933){\makebox(0,0)[lb]{\smash{{\SetFigFont{10}{12.0}{\rmdefault}{\mddefault}{\updefault}{$B$}%
}}}}
\put(4489,249){\makebox(0,0)[lb]{\smash{{\SetFigFont{10}{12.0}{\rmdefault}{\mddefault}{\updefault}{$A$}%
}}}}
\put(5198,-814){\makebox(0,0)[lb]{\smash{{\SetFigFont{10}{12.0}{\rmdefault}{\mddefault}{\updefault}{$B$}%
}}}}
\put(6150,-115){\makebox(0,0)[lb]{\smash{{\SetFigFont{10}{12.0}{\rmdefault}{\mddefault}{\updefault}{$A$}%
}}}}
\put(6977,-942){\makebox(0,0)[lb]{\smash{{\SetFigFont{10}{12.0}{\rmdefault}{\mddefault}{\updefault}{$B$}%
}}}}
\put(7686,-115){\makebox(0,0)[lb]{\smash{{\SetFigFont{10}{12.0}{\rmdefault}{\mddefault}{\updefault}{$C$}%
}}}}
\put(6977,594){\makebox(0,0)[lb]{\smash{{\SetFigFont{10}{12.0}{\rmdefault}{\mddefault}{\updefault}{$D$}%
}}}}
\put(5251,179){\makebox(0,0)[lb]{\smash{{\SetFigFont{10}{12.0}{\rmdefault}{\mddefault}{\updefault}{$x$}%
}}}}
\put(6369,441){\makebox(0,0)[lb]{\smash{{\SetFigFont{10}{12.0}{\rmdefault}{\mddefault}{\updefault}{$1/2$}%
}}}}
\put(7501,-781){\makebox(0,0)[lb]{\smash{{\SetFigFont{10}{12.0}{\rmdefault}{\mddefault}{\updefault}{$1/2$}%
}}}}
\put(6046,-736){\makebox(0,0)[lb]{\smash{{\SetFigFont{10}{12.0}{\rmdefault}{\mddefault}{\updefault}{$1/(2x)$}%
}}}}
\put(7426,374){\makebox(0,0)[lb]{\smash{{\SetFigFont{10}{12.0}{\rmdefault}{\mddefault}{\updefault}{$x/2$}%
}}}}
\end{picture}}
\caption{}
\label{spider2}
\end{figure}

\begin{lem} [Spider Lemma]\label{spider}
(a) Let $G$ be a weighted graph containing the subgraph $K$ shown on the left in Figure \ref{spider1} (the labels indicate weights, unlabeled edges have weight 1). Suppose in addition that the four inner black vertices in the subgraph $K$, different from $A,B,C,D$, have no neighbors outside $K$. Let $G'$ be the graph obtained from $G$ by replacing $K$ by the graph $\overline{K}$ shown on right in Figure \ref{spider1}, where the dashed lines indicate new edges, weighted as shown. Then $\M(G)=(xz+yt)\M(G')$.

(b) Consider the above local replacement operation when $K$ and $\overline{K}$ are graphs shown in Figure \ref{spider2}(a) with the indicated weights (in particular, $K'$ has a new vertex $D$, that is incident only to $A$ and $C$). Then $\M(G)=2\M(G')$.

(c) The statement of part (b) is also true when $K$ and $\overline{K}$ are the graphs indicated in Figure \ref{spider2}(b) (in this case $G'$ has two new vertices $C$ and $D$, they are adjacent only to one another and to $B$ and $A$, respectively).
\end{lem}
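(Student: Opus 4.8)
The plan is to prove the three parts in order, treating part (a) --- the general urban-renewal identity --- by a direct analysis of the perfect matchings near the subgraph $K$, and then deducing the degenerate versions (b) and (c) from (a) by means of the Vertex-Splitting Lemma (Lemma \ref{VS}) and the Star Lemma (Lemma \ref{star}). For part (a), the starting observation is that the four inner black vertices of $K$ have no neighbors outside $K$, so in \emph{any} perfect matching $\mu$ of $G$ each of them is matched either to its unique outer neighbor among $A,B,C,D$ along a weight-$1$ edge, or to an adjacent inner vertex along one of the four weighted edges of the central $4$-cycle. I would classify the matchings $\mu$ according to the \emph{interface} $T\subseteq\{A,B,C,D\}$ consisting of those outer vertices that $\mu$ matches to the \emph{outside} of $K$. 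Because a $4$-cycle has no edge joining opposite vertices, a short adjacency argument shows that the only interfaces admitting an internal completion are $T=\{A,B,C,D\}$, $T=\emptyset$, and the four cyclically adjacent pairs. The weight of $\mu$ then factors as a contribution $F_T$ coming entirely from $G\setminus K$ together with the weight-$1$ pendant edges to the vertices of $T$, times a local contribution $w_K(T)$ coming from the edges of $\mu$ lying inside $K$, so that
\[
\M(G)=\sum_{T}F_T\,w_K(T).
\]

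A direct enumeration then gives the local weights: $w_K(\{A,B,C,D\})=xz+yt=\Delta$ (the two matchings of the central $4$-cycle by opposite pairs of its edges), $w_K(\emptyset)=1$ (all four pendant edges), and $w_K$ equal to $x,y,z,t$ respectively on the four adjacent-pair interfaces (a single cycle edge used). Performing the identical analysis for $\overline K$ --- whose complement $G'\setminus\overline K$ equals $G\setminus K$, so that the \emph{same} factors $F_T$ occur --- and reading off the weights $x/\Delta,y/\Delta,z/\Delta,t/\Delta$ from Figure \ref{spider1}, one finds $w_{\overline K}(T)=\Delta^{-1}w_K(T)$ for every admissible $T$. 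Substituting into the two expansions yields $\M(G)=\Delta\sum_T F_T\,w_{\overline K}(T)=\Delta\,\M(G')$, which is part (a).

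For parts (b) and (c) the plan is to realize the reduced configurations of Figure \ref{spider2} as specializations of the move in (a). In each case the replacement introduces new internal vertices ($D$ in (b); $C$ and $D$ in (c)) that are incident only to the two indicated interface vertices, so I would first use the Vertex-Splitting Lemma (Lemma \ref{VS}) to introduce these vertices without changing the number of matchings, bringing the local picture into a form to which part (a) applies, and then use the Star Lemma (Lemma \ref{star}) to rescale the weights at the affected vertices to the normalized values $x/2$, $y/2$, $1/(2x)$, $1/(2y)$, $1/2$ displayed in the figure. Under the weight specialization forced by the degeneration the multiplier $\Delta=xz+yt$ collapses to exactly $2$, giving $\M(G)=2\,\M(G')$ in both cases; alternatively each of (b) and (c) can be checked by the same finite local enumeration carried out directly on the small graphs of Figure \ref{spider2}, matching the boundary behavior of $K$ and $\overline K$ term by term.

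I expect the main obstacle to lie in the bookkeeping of part (a): verifying that the list of admissible interfaces $T$ is complete (in particular that no matching joins two opposite outer vertices through the interior), and that the outside factor $F_T$ is genuinely shared between $G$ and $G'$, so that the two expansions may be compared term by term. Once these points are secured, the identity $w_{\overline K}(T)=\Delta^{-1}w_K(T)$ reduces to a finite check. The remaining delicacy is in parts (b) and (c), where the vertex-splitting and rescaling steps must be carried out in the correct order, and the interface behavior tracked carefully, so that the accumulated scalar collapses to precisely $2$ rather than to some other multiple of it.
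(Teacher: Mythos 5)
The paper itself offers no proof of this lemma: part (a) is quoted as Propp's generalization of Kuperberg's urban renewal trick, and parts (b), (c) are quoted from Ciucu (Lemma 2.6 of \cite{Ciucu4}), so your argument must stand on its own. Your part (a) does stand: the interface decomposition $\M(G)=\sum_T F_T\,w_K(T)$, the identification of the admissible interfaces (the full set, the empty set, and the four cyclically adjacent pairs), and the verification $w_{\overline K}(T)=\Delta^{-1}w_K(T)$ from the diagonally swapped weights of Figure \ref{spider1} constitute a complete and correct proof --- indeed the standard one.

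The genuine gap is in your primary mechanism for parts (b) and (c). First, the Vertex-Splitting Lemma \ref{VS} cannot ``bring the local picture into a form to which part (a) applies'': splitting a vertex $v$ creates a new vertex adjacent only to the two halves of $v$, so iterated splitting produces only tree-like expansions and can never manufacture the missing fourth cycle vertex, which must be adjacent to \emph{two} prescribed existing vertices. Second, the Star Lemma \ref{star} may not be applied at $A$, $B$, $C$: these vertices carry edges into the rest of $G$, and rescaling all edges at them would alter $G$ outside the local window; the only legal site is the internal vertex $D$, which can produce the weights $x/2$, $y/2$ but never $1/(2x)$, $1/(2y)$. Third, with the unit weights that vertex splitting could supply, the multiplier is $\Delta=x+y$, not $2$. (The graph you reach this way does in fact have the same matching sum as $G'$, but proving that requires exactly the local computation you were trying to avoid.) The reduction that does work --- essentially Ciucu's --- is different: adjoin to $G$ a new vertex $d$ joined to the two free inner vertices of $K$ by edges of weight $1/x$ and $1/y$, together with a pendant vertex $D$ attached to $d$ by a weight-one edge; the pendant edge is forced, so $\M$ is unchanged, and part (a) now applies verbatim with $\Delta=x\cdot\tfrac1x+y\cdot\tfrac1y=2$, yielding precisely the weights shown in Figure \ref{spider2}.

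A related caution concerns your fallback enumeration: it succeeds only with the correct reading of Figure \ref{spider2}. In parts (b) and (c), $K$ is not the bare labelled path $A$--$B$--$C$ (respectively the bare edge $A$--$B$); it contains unlabelled inner black vertices, namely a path of three inner vertices carrying the weights $x,y$, joined by weight-one legs to $A$, $B$, $C$ in (b), and two such inner vertices in (c). If $K$ consisted of the labelled vertices alone, $G$ and $G'$ would differ by an odd number of vertices, so one of $\M(G)$, $\M(G')$ would vanish identically and the claimed identity could not hold --- your enumeration would expose a contradiction rather than ``collapse to $2$.'' With the inner vertices in place, your interface method goes through: in (b) the admissible interfaces are $\emptyset$, $\{A,B\}$, $\{B,C\}$, with local weights $1$, $x$, $y$ for $K$ and $\tfrac12$, $\tfrac{x}{2}$, $\tfrac{y}{2}$ for $\overline K$, whence $\M(G)=2\M(G')$; part (c) is analogous.
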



\begin{figure}\centering%
\includegraphics[width=9cm]{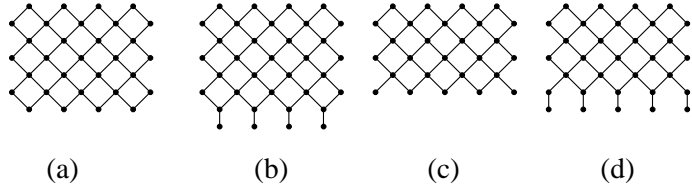}
\caption{Four graphs (a) $\AR_{3,4}$, (b) ${}_|\AR_{3,4}$, (c) $\AR_{3-\frac{1}{2},4}$, and (d) ${}_|\AR_{3-\frac{1}{2},4}$}
\label{ARvariants}
\end{figure}

The following four families of graphs will play a special role in our proofs. Consider a $(2m+1)\times(2n+1)$ rectangular chessboard $B$ and suppose the corners are black. The \textit{Aztec rectangle} $\AR_{m,n}$ is the graph whose vertices are the white squares and whose edges connect precisely those pairs of white squares that are diagonally adjacent (the case $m=3$, $n=4$ is shown in Figure \ref{ARvariants}(a)). If one removes the bottom row of the board $B$ and then applies the same procedure, the resulting graph is denoted by $\AR_{m-\frac12,n}$, and called a \textit{baseless Aztec rectangle} (see Figure \ref{ARvariants}(c) for an illustration)

The \textit{combed Aztec diamond }$_{|}\AR_{m,n}$ is the graph obtained from $\AR_{m,n}$ by appending a vertical edge to each of its bottom vertices (an example is shown in Figure \ref{ARvariants}(b)). The \textit{combed baseless Aztec rectangle} $_{|}\AR_{m-\frac12,n}$ is the graph obtained from $\AR_{m-\frac12,n}$ by appending a vertical edge to each of its bottom vertices (see  Figure \ref{ARvariants}(d) for an example).

The \textit{connected sum} $G\#G'$ of two disjoint graphs $G$ and $G'$ along the ordered sets of vertices $\{v_1,\dotsc,v_n\}\subset V(G)$ and $\{v'_1,\dotsc,v'_n\}\subset V(G')$ is the graph obtained from $G$ and $G'$ by identifying vertices $v_i$ and $v'_i$, for $i=1,\dotsc,n$.

\begin{figure}\centering
\resizebox{!}{8.5cm}{
\begin{picture}(0,0)%
\includegraphics{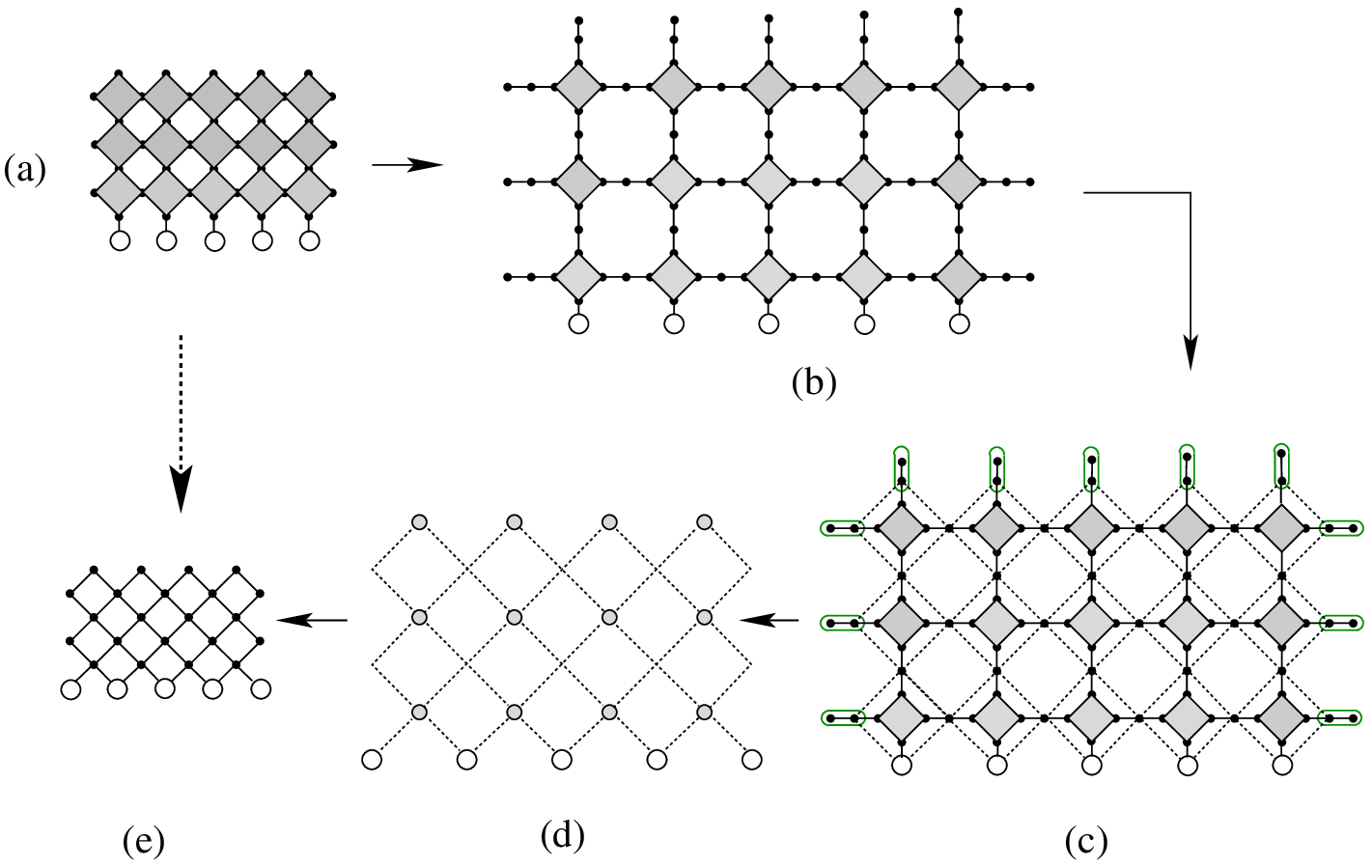}%
\end{picture}%
%
%
\setlength{\unitlength}{3947sp}%
\begingroup\makeatletter\ifx\SetFigFont\undefined%
\gdef\SetFigFont#1#2#3#4#5{%
  \reset@font\fontsize{#1}{#2pt}%
  \fontfamily{#3}\fontseries{#4}\fontshape{#5}%
  \selectfont}%
\fi\endgroup%
\begin{picture}(6787,4284)(282,-3699)
\put(807,-796){\makebox(0,0)[lb]{\smash{{\SetFigFont{9}{10.8}{\rmdefault}{\mddefault}{\updefault}{$v'_1$}%
}}}}
\put(1053,-793){\makebox(0,0)[lb]{\smash{{\SetFigFont{9}{10.8}{\rmdefault}{\mddefault}{\updefault}{$v'_2$}%
}}}}
\put(1293,-796){\makebox(0,0)[lb]{\smash{{\SetFigFont{9}{10.8}{\rmdefault}{\mddefault}{\updefault}{$v'_3$}%
}}}}
\put(1524,-802){\makebox(0,0)[lb]{\smash{{\SetFigFont{9}{10.8}{\rmdefault}{\mddefault}{\updefault}{$v'_4$}%
}}}}
\put(1779,-799){\makebox(0,0)[lb]{\smash{{\SetFigFont{9}{10.8}{\rmdefault}{\mddefault}{\updefault}{$v'_5$}%
}}}}
\put(534,-3065){\makebox(0,0)[lb]{\smash{{\SetFigFont{9}{10.8}{\rmdefault}{\mddefault}{\updefault}{$v'_1$}%
}}}}
\put(780,-3062){\makebox(0,0)[lb]{\smash{{\SetFigFont{9}{10.8}{\rmdefault}{\mddefault}{\updefault}{$v'_2$}%
}}}}
\put(1020,-3065){\makebox(0,0)[lb]{\smash{{\SetFigFont{9}{10.8}{\rmdefault}{\mddefault}{\updefault}{$v'_3$}%
}}}}
\put(1251,-3071){\makebox(0,0)[lb]{\smash{{\SetFigFont{9}{10.8}{\rmdefault}{\mddefault}{\updefault}{$v'_4$}%
}}}}
\put(1506,-3068){\makebox(0,0)[lb]{\smash{{\SetFigFont{9}{10.8}{\rmdefault}{\mddefault}{\updefault}{$v'_5$}%
}}}}
\end{picture}}
\caption{Transformation $T_1$. The dotted edges have weight $1/2$.}
\label{LemmaT1}
\end{figure}

\begin{lem}[Transformation $T_1$]\label{T1}

 Let $G$ be a graph and let  $\{v_1,\dotsc,v_q\}$ be an ordered subset of its vertices. Then
\begin{equation}\label{T1eq1}
\M\left({}_|\AR_{p,q}\#G\right)=2^p\M\left(\AR_{p-\frac12,q-1}\#G\right),
\end{equation}
where the connected sum acts on $G$ along $\{v_1,\dotsc,v_q\}$, and on ${}_|\AR_{p,q}$ and $\AR_{p-\frac12,q}$ along their bottom $q$ vertices (ordered from left to right).

\end{lem}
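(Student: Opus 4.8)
The plan is to transform the subgraph ${}_|\AR_{p,q}$ into $\AR_{p-\frac12,q-1}$ by a sequence of the local operations supplied by Lemmas \ref{VS}, \ref{star} and \ref{spider}, performed entirely inside the Aztec part and fixing the $q$ glue vertices along which the connected sum is taken. Each of these operations changes the number of perfect matchings of the \emph{whole} graph by an explicit scalar that does not depend on $G$ --- this is exactly the content of those lemmas, whose hypotheses constrain only the local neighborhood being modified --- so it suffices to exhibit one sweep of moves carrying ${}_|\AR_{p,q}$, with its bottom $q$ vertices marked, to $\AR_{p-\frac12,q-1}$, with its bottom $q$ vertices marked, and to record the product of all the scalars. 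Because $G$ is arbitrary I cannot appeal to any global matching count; the entire argument has to be local and must preserve the marked glue vertices $v_1,\dotsc,v_q$ setwise.

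Concretely, I would sweep the bottom portion of the Aztec rectangle. Where needed, the Vertex-Splitting Lemma first exposes, at each cell that is to be renewed, the inner-vertex structure required by the Spider Lemma. I then apply the Spider Lemma to these cells: the generic interior cells use part (a), each contributing the factor $\Delta = 1\cdot 1 + 1\cdot 1 = 2$ and leaving behind the weight-$\tfrac12$ dotted edges of Figure \ref{LemmaT1}, while the cells abutting the combed pendant edges and the two slanted boundaries use the tailored boundary variants (b) and (c), which introduce the new degree-two vertices and the $1/(2x)$-type weights of Figure \ref{spider2}. The combing is precisely what makes parts (b),(c) applicable, and it is this boundary interaction that simultaneously (i) forces certain edges near the two bottom corners, whose removal (with their incident vertices) shrinks the board width from $2q+1$ to $2q-1$, i.e. $q \mapsto q-1$, and (ii) turns the full bottom row into the baseless bottom, i.e. $p \mapsto p-\tfrac12$. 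Finally I clear every weight-$\tfrac12$ edge by repeated use of the Star Lemma, each application contributing a further power of $2$, and collect all the scalars; the graph that remains is exactly $\AR_{p-\frac12,q-1}\#G$.

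The main obstacle is the exact bookkeeping of the power of $2$: I must show that the accumulated exponent equals $p$ and, in particular, is \emph{independent of $q$}, even though the number of interior cells renewed by part (a) grows with $q$. The resolution should be that the Spider-Lemma factors $\Delta=2$ created at interior cells are exactly undone by the Star-Lemma factors spent clearing the weight-$\tfrac12$ edges those same cells produce, so that only the boundary cells and the two bottom corners leave a net contribution, telescoping to $2^p$. I would make this precise by fixing a definite order for the sweep, computing the contribution column by column, and checking that the $q$-dependence cancels between the urban-renewal and weight-clearing steps; I would then sanity-check against $p=q=1$, where the identity reduces to the observation that the combed order-$1$ diamond forces its pendant edge unused and its $4$-cycle admits two matchings, giving the factor $2=2^{1}$, and against $p=q=2$. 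A secondary point to verify is that the degenerate widths (e.g. $q=1$, which produces an isolated vertex on the baseless side) make both sides vanish, so that \eqref{T1eq1} holds unconditionally.
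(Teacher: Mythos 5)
Your overall architecture --- vertex-splitting to expose spider cells, urban renewal carried out entirely inside the Aztec part, the Star Lemma to clear the resulting $\tfrac12$-weights, with every scalar independent of $G$ because the moves are local and fix the glue vertices --- is exactly the paper's proof, and your sanity checks are correct. But the boundary mechanism you propose is a step that would fail. Parts (b) and (c) of Lemma \ref{spider} go in the ``growing'' direction: they \emph{create} new vertices ($D$, respectively $C$ and $D$), which is what is needed for the enlarging transformation $T_2$ of Lemma \ref{T2}, not for the shrinking transformation $T_1$; and read in reverse they require the $4$-cycle to carry the weight pattern $y/2$, $x/2$, $1/(2x)$, $1/(2y)$, which a unit-weight cell of ${}_|\AR_{p,q}$ never has (one would need $x=y=2$ and $x=y=1/2$ simultaneously). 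You have also inverted the role of the combing: the pendant edges are precisely what make part (a) applicable at the bottom row of cells --- they serve as the fourth legs of those spiders, with the glue vertices $v_1,\dotsc,v_q$ as outer spider vertices, which is how the $v_i$ end up as the bottom row of $\AR_{p-\frac12,q-1}$. The top and side cells are not handled by (b)/(c) either, but by degenerate vertex splitting (take $K=\emptyset$ in Lemma \ref{VS}), which hangs a pendant path off each top vertex and each extreme side vertex; after that, \emph{all} $pq$ cells are genuine spiders and part (a) applies uniformly.

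With the moves set up this way, the trimming and the bookkeeping you worry about are immediate. After renewing all $pq$ cells, the leftover pendant vertices at the top and sides have degree one, and their $2p+q$ incident edges are forced \emph{weight-one} edges; removing them (with their endpoints) is what deletes the old top and side vertices and produces the narrower, baseless rectangle --- the trimming happens at the top and the two slanted sides, not at the bottom corners as you assert. This gives $\M\bigl({}_|\AR_{p,q}\#G\bigr)=2^{pq}\,\M\bigl(\AR^{\frac12}_{p-\frac12,q-1}\#G\bigr)$ with no correction from the forced edges. Then apply the Star Lemma with $t=2$ at the $p(q-1)$ vertices of $\AR_{p-\frac12,q-1}$ lying in its rows of length $q-1$: every edge of the baseless rectangle is incident to exactly one such vertex, and none of them is a glue vertex, so $G$ is untouched and all weights become $1$ at the cost of $2^{p(q-1)}$. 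The net exponent is $pq-p(q-1)=p$ by a single global count; no cell-by-cell cancellation, column sweep, or telescoping argument is needed.
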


For $p=3,q=5$, the above transformation is illustrated in Figures \ref{LemmaT1}(a) and (e).
\begin{proof}
Let $G_1$ be the graph obtained from ${}_|AR_{p,q}\#G$ by applying the Vertex-splitting Lemma at all vertices of ${}_|AR_{p,q}$ that are not $v'_1,\dotsc,v'_q$ and not adjacent to any of them (Figures \ref{LemmaT1}(a) and (b) illustrate this for the case \\$p=3$ and $q=5$). Apply the Spider Lemma around all $pq$ shaded diamond in the graph $G_1$, and remove the $2p+q$ edges adjacent to a vertex of degree 1, which are forced edges (see Figure \ref{LemmaT1}(c), the forced edges are the circled ones). The resulting graph is isomorphic to $\AR^{\frac{1}{2}}_{p-\frac12,q-1}\#G$, where $\AR^{\frac{1}{2}}_{p-\frac12,q-1}$ is the graph obtained from $\AR_{p-\frac12,q-1}$ by changing all the weights of edges to $1/2$ (see Figure \ref{LemmaT1}(d); dotted edges have weight $1/2$). By Lemmas \ref{VS} and \ref{spider}, we have
\begin{equation}\label{T1eq2}
\M\left({}_|\AR_{p,q}\#G\right)=\M(G_1)=2^{pq}\M\left(\AR^{\frac{1}{2}}_{p-\frac12,q-1}\#G\right).
\end{equation}
Applying the Star Lemma with factor $t=2$ to all $p(q-1)$ shaded vertices of the graph $\AR^{\frac{1}{2}}_{p-\frac12,q-1}$ (see Figure \ref{LemmaT1}(d)), the graph $\AR^{\frac{1}{2}}_{p-\frac12,q-1}\#G$ is turned into  $\AR_{p-\frac12,q-1}\#G$. By (\ref{T1eq2}) and Lemma \ref{star}, we obtain
\begin{align}\label{T1eq3}
\M\left({}_|\AR_{p,q}\#G\right)&=2^{pq}\M\left(\AR^{\frac{1}{2}}_{p-\frac12,q-1}\#G\right)\notag\\
&=2^{pq}2^{-p(q-1)}\M\left(\AR_{p-\frac12,q-1}\#G\right),
\end{align}
which proves (\ref{T1eq1}).
\end{proof}




\begin{figure}\centering%
\resizebox{!}{9.5cm}{
\begin{picture}(0,0)%
\includegraphics{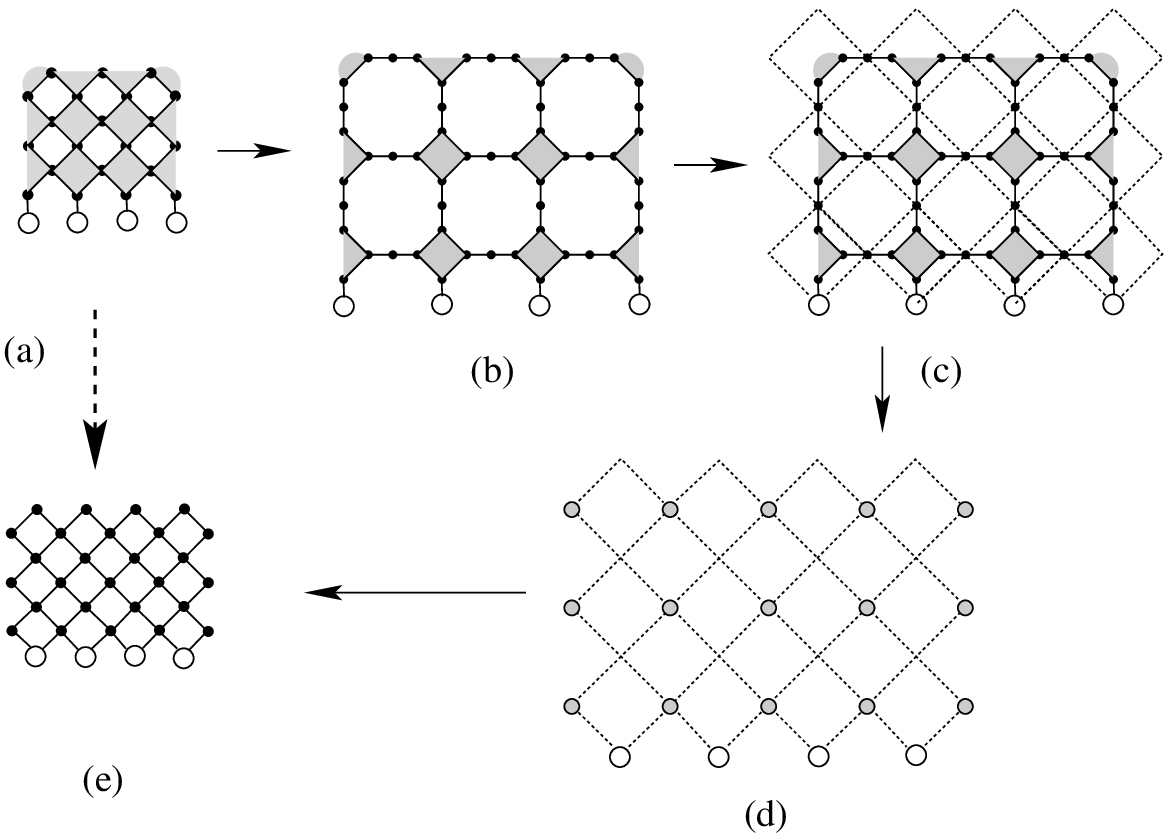}%
\end{picture}%
%
%
\setlength{\unitlength}{3947sp}%
\begingroup\makeatletter\ifx\SetFigFont\undefined%
\gdef\SetFigFont#1#2#3#4#5{%
  \reset@font\fontsize{#1}{#2pt}%
  \fontfamily{#3}\fontseries{#4}\fontshape{#5}%
  \selectfont}%
\fi\endgroup%
\begin{picture}(5593,3998)(489,-3710)
\put(554,-998){\makebox(0,0)[lb]{\smash{{\SetFigFont{9}{10.8}{\rmdefault}{\mddefault}{\updefault}{$v'_1$}%
}}}}
\put(797,-1004){\makebox(0,0)[lb]{\smash{{\SetFigFont{9}{10.8}{\rmdefault}{\mddefault}{\updefault}{$v'_2$}%
}}}}
\put(1034,-1001){\makebox(0,0)[lb]{\smash{{\SetFigFont{9}{10.8}{\rmdefault}{\mddefault}{\updefault}{$v'_3$}%
}}}}
\put(1268,-1007){\makebox(0,0)[lb]{\smash{{\SetFigFont{9}{10.8}{\rmdefault}{\mddefault}{\updefault}{$v'_4$}%
}}}}
\put(616,-3088){\makebox(0,0)[lb]{\smash{{\SetFigFont{9}{10.8}{\rmdefault}{\mddefault}{\updefault}{$v'_1$}%
}}}}
\put(859,-3094){\makebox(0,0)[lb]{\smash{{\SetFigFont{9}{10.8}{\rmdefault}{\mddefault}{\updefault}{$v'_2$}%
}}}}
\put(1096,-3091){\makebox(0,0)[lb]{\smash{{\SetFigFont{9}{10.8}{\rmdefault}{\mddefault}{\updefault}{$v'_3$}%
}}}}
\put(1330,-3097){\makebox(0,0)[lb]{\smash{{\SetFigFont{9}{10.8}{\rmdefault}{\mddefault}{\updefault}{$v'_4$}%
}}}}
\end{picture}}
\caption{Transformation $T_2$. The dotted edges have weight $1/2$.}
\label{LemmaT2}
\end{figure}

\begin{lem}[Transformation $T_2$]\label{T2}
 Let $G$ be a graph and let  $\{v_1,\dotsc,v_{q+1}\}$ be an ordered subset of its vertices. Then
\begin{equation}\label{T2eq1}
\M\left({}_|\AR_{p-\frac12,q}\#G\right)=2^{-p}\M(\AR_{p,q+1}\#G),
\end{equation}
where the connected sum acts on $G$ along $\{v_1,\dotsc,v_{q+1}\}$, and on ${}_|\AR_{p-\frac12,q}$ and $\AR_{p,q+1}$ along their bottom $q+1$ vertices ordered from left to right (see Figures \ref{LemmaT2}(a) and (e), for $p=3,q=3$).
\end{lem}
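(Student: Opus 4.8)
The plan is to prove $T_2$ by running the very same pipeline used for Transformation $T_1$ (Lemma \ref{T1}) --- Vertex-Splitting, then Spider, then Star --- now starting from the combed baseless rectangle ${}_|\AR_{p-\frac12,q}\#G$; the four stages are exactly those pictured in Figure \ref{LemmaT2}(a)--(e).

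First I would apply the Vertex-Splitting Lemma (Lemma \ref{VS}) at every vertex of ${}_|\AR_{p-\frac12,q}$ that is neither one of $v'_1,\dots,v'_{q+1}$ nor adjacent to one of them, producing a graph $G_1$ with $\M(G_1)=\M({}_|\AR_{p-\frac12,q}\#G)$ whose Aztec part is an array of disjoint shaded diamonds (4-cycles carrying legs), as in Figure \ref{LemmaT2}(b). Because the board here is baseless, this array has $p(q+1)$ diamonds (the analog of the $pq$ diamonds of $T_1$), one for each black square lying in an odd-indexed row and odd-indexed column of the $2p\times(2q+1)$ board.

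Next I would apply the Spider Lemma (Lemma \ref{spider}) around each of these $p(q+1)$ diamonds --- part (a) in the interior and the boundary variants (b)--(c) along the top and combed bottom rows --- and delete the edges that become forced (those incident to a new degree-one vertex). Since every weight starts at $1$, each replacement contributes the factor $\Delta=xz+yt=2$, and the forced edges carry weight $1$, so
\begin{equation*}
\M\bigl({}_|\AR_{p-\tfrac12,q}\#G\bigr)=\M(G_1)=2^{\,p(q+1)}\,\M\bigl(\AR^{\frac12}_{p,q+1}\#G\bigr),
\end{equation*}
where $\AR^{\frac12}_{p,q+1}$ is $\AR_{p,q+1}$ with all edge-weights equal to $1/2$, glued to $G$ along its $q+1$ bottom vertices (Figure \ref{LemmaT2}(d)). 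Finally I would restore the weights with the Star Lemma (Lemma \ref{star}): the $q+1$ bottom vertices shared with $G$ lie in the odd-row color class of $\AR_{p,q+1}$, so applying the Star Lemma with $t=2$ at each of the $p(q+2)$ vertices of the even-row color class multiplies every internal Aztec edge by $2$ --- turning $\AR^{\frac12}_{p,q+1}$ into $\AR_{p,q+1}$ --- while leaving the edges of $G$ untouched. Hence $\M(\AR^{\frac12}_{p,q+1}\#G)=2^{-p(q+2)}\M(\AR_{p,q+1}\#G)$, and combining the two facts gives
\begin{equation*}
\M\bigl({}_|\AR_{p-\tfrac12,q}\#G\bigr)=2^{\,p(q+1)-p(q+2)}\,\M(\AR_{p,q+1}\#G)=2^{-p}\,\M(\AR_{p,q+1}\#G),
\end{equation*}
which is (\ref{T2eq1}).

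The hard part will be the step compressed into the third paragraph: verifying that after the vertex-splittings, the $p(q+1)$ spider replacements, and the forced-edge deletions, the resulting graph is \emph{exactly} $\AR^{\frac12}_{p,q+1}\#G$ --- with the right number of rows and columns, and with precisely $v_1,\dots,v_{q+1}$ as its bottom vertices. In the interior the local picture is identical to $T_1$, so the genuine work is at the two horizontal boundaries, namely the baseless top edge and the combed bottom edge: one must check that there the diamonds, their legs, and the forced edges reassemble into the top and bottom rows of $\AR_{p,q+1}$ (and not of a rectangle of a neighboring size), and in particular confirm the diamond count $p(q+1)$ and that the shared bottom vertices land in the odd-row color class. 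Once this identification is in hand, the power-of-two bookkeeping above is routine.
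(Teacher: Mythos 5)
Your proposal is correct and follows essentially the same route as the paper, which proves Lemma \ref{T2} by exactly this Vertex-Splitting $\to$ Spider $\to$ Star pipeline (``similarly to Lemma \ref{T1}'', with the process shown in Figure \ref{LemmaT2}); your bookkeeping of $p(q+1)$ spider factors against $p(q+2)$ star applications, giving the net factor $2^{-p}$, matches the required identity and mirrors the $2^{pq}\cdot 2^{-p(q-1)}$ computation in the proof of Lemma \ref{T1}.
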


One can prove Lemma \ref{T2} similarly to Lemma \ref{T1} by using Vertex-splitting Lemma, Spider Lemma and Star Lemma  to transform the graph on the left hand side of (\ref{T2eq1}) into the graph on the right hand side. This transforming process is illustrated by Figure \ref{LemmaT2}, for $p=3,q=3$.

\medskip

An \textit{induced subgraph} of a graph $G$ is a graph whose vertex set is a subset $U$ of the vertex set of $G$, and whose edge set consists of all edges of $G$ with endpoints in $U$.  For a graph $G$, we denote by $V(G)$ its vertex set.

\begin{lem}[Graph Splitting Lemma]\label{graphsplitting}
Let $G$ be a bipartite graph, and let $V_1$ and $V_2$ be the two vertex classes. Let $H$ be an induced subgraph of $G$.

(a) Assume that $H$ satisfies the following  two conditions.
\begin{enumerate}
\item[(i)] The \textit{separating condition}: there are no edges of $G$ connecting a vertex in $V(H)\cap V_1$ and a vertex in $V(G-H)$,

\item[(ii)] The\textit{ balancing condition}: $|V(H)\cap V_1|=|V(H)\cap V_2|$.
\end{enumerate}
Then
\begin{equation}\label{GS}
\M(G)=\M(H)\, \M(G-H).
\end{equation}

(b) If $H$ satisfies the separating condition and but $|V(H)\cap V_1|>|V(H)\cap V_2|$, then $\M(G)=0$.
\end{lem}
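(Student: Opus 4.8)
The plan is to show that the separating condition forces every perfect matching of $G$ to split into a perfect matching of $H$ together with a perfect matching of $G-H$, and then to read off both parts of the lemma from the resulting bijection. The starting observation is that, because $G$ is bipartite and $H$ is an \emph{induced} subgraph, condition (i) says exactly that every vertex $v\in V(H)\cap V_1$ has all of its $G$-neighbors lying in $V(H)\cap V_2$ (its neighbors must lie in $V_2$ by bipartiteness, within $V(H)$ by the separating condition, and these are genuine edges of $H$ since $H$ is induced).

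First I would fix an arbitrary perfect matching $M$ of $G$ and examine how the vertices of $V(H)\cap V_1$ are matched. Each such vertex is matched by $M$ to one of its neighbors, which by the above observation lies in $V(H)\cap V_2$, and distinct vertices of $V(H)\cap V_1$ use distinct partners. This already yields part (b): if $|V(H)\cap V_1|>|V(H)\cap V_2|$, there are not enough available partners in $V(H)\cap V_2$, so by the pigeonhole principle no perfect matching can exist and $\M(G)=0$.

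For part (a), under the balancing condition (ii) the $|V(H)\cap V_1|$ edges of $M$ incident to $V(H)\cap V_1$ saturate not only $V(H)\cap V_1$ but, by equality of the two cardinalities, all of $V(H)\cap V_2$ as well. Hence this set of edges is precisely $M\cap E(H)$, and it forms a perfect matching of $H$, while the remaining edges of $M$ touch only vertices of $G-H$ and form a perfect matching of $G-H$. Conversely, the union of any perfect matching of $H$ with any perfect matching of $G-H$ is a perfect matching of $G$, since $V(H)$ and $V(G-H)$ partition $V(G)$. This sets up a bijection $M\leftrightarrow(M\cap E(H),\,M\setminus E(H))$ between perfect matchings of $G$ and pairs consisting of a perfect matching of $H$ and one of $G-H$.

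Finally I would pass to weights: since the weight of a matching is the product of its edge weights, the weight of $M$ factors as the product of the weights of its two pieces, and summing over the bijection gives $\M(G)=\M(H)\,\M(G-H)$, which is (\ref{GS}). The one point requiring care---and the main obstacle---is verifying that $M$ has no edge incident to $V(H)\cap V_2$ other than those coming from $V(H)\cap V_1$; this is exactly where the balancing condition is used to upgrade the one-sided saturation of $V(H)\cap V_1$ into full saturation of $V(H)$, guaranteeing that $M\setminus E(H)$ stays entirely within $G-H$ and that the factorization is clean.
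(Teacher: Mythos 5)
Your proof is correct and follows essentially the same route as the paper: the separating condition (with bipartiteness and the fact that $H$ is induced) forces every vertex of $V(H)\cap V_1$ to be matched inside $H$, the balancing condition upgrades this to full saturation of $V(H)$, so no matching edge crosses between $H$ and $G-H$, yielding the factorization $\M(G)=\M(H)\,\M(G-H)$ and, by pigeonhole, $\M(G)=0$ in the unbalanced case. Your write-up is simply a more detailed account (explicit bijection and weight bookkeeping) of the paper's terse argument.
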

\begin{proof}
Color all vertices of $V_1$ white, and all vertices of $V_2$ black. To prove part (a) of the lemma, it suffices to show that that none of the perfect matchings of $G$ contain any edge connecting a vertex of $H$ to a vertex of $G-H$. Since there are no edges of $G$ connecting a white vertex of $H$ to a vertex of $G-H$ (by the separating condition), the previous assertion follows by the balancing condition.

Part (b) is perfectly analogous, the only difference is that $H$ has no tilings when $|V(H)\cap V_1|>|V(H)\cap V_2|.$
\end{proof}

\section{Proof of Theorem \ref{douggen}}

The proof of Theorem 2.3 --- and as we will see in the next section, also the proof of Theorem 2.2 --- is based on a common generalization of the transformations $T_1$ and $T_2$ of Lemmas \ref{T1} and \ref{T2}, presented below in Proposition \ref{composite}.

The diagonals divide $D_a(d_1,\dotsc,d_k)$ into $k$ parts called \textit{layers}. Recall that $m$ and $n$ are the numbers of rows of triangular black cells of $D_a(d_1,\dotsc,d_k)$ that point up and down, respectively. As one moves down from top, each row of black up-pointing triangles causes the length of the rows of black cells in the next layer to go up by one, and each row of black down-pointing triangles causes the length of the rows of black cells in that layer to be one less than in the previous layer. This implies that the bottom row of cells of $D_a(d_1,\dotsc,d_k)$ consists of $a+m-n$ cells (note that these are always triangular and up-pointing, but they may be either black or white). The \textit{width of a layer} is one less than the length of its rows of blacks cells.

Let $G_a(d_1,\dotsc,d_k)$ be the dual graph of the region $D_a(d_1,\dotsc,d_k)$.

\begin{figure}\centering
\includegraphics[width=12cm]{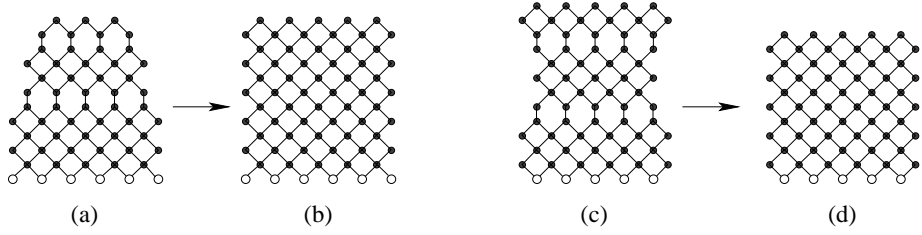}
\caption{Four graphs (a) $G_{3}(1,3,5)$, (b) $\AR_{6-1/2,5}$, (c) $G_{5}(2,4,4,)$, and (d) $\AR_{5,5}$.}
\label{mass}
\end{figure}

\begin{prop}[Composite Transformation]\label{composite} Let $a$ and $d_1,\dotsc,d_k$ be positive integers. Recall that $C$ is the number of black cells of $D_a(d_1,\dotsc,d_k)$ that are either squares or up-pointing triangles, and $h$ is the number of rows of black cells consisting of  squares or up-pointing triangles. Let $G$ be a graph, and consider an ordered subset of its vertices $\{v_1,\dotsc,v_{a+m-n}\}$.

(a) If the bottom row of cells in $D_a(d_1,\dotsc,d_k)$ is white, then
\begin{equation}\label{composeq1}
\M(G_a(d_1,\dotsc,d_k)\#G)=2^{C-h(a+m-n+1)}\M(\AR_{h,a+m-n}\#G),
\end{equation}
where the connected sum acts on $G$ along $\{v_1,\dotsc,v_{a+m-n}\}$, and on
the other two summands along their bottom $a+m-n$ vertices (ordered from left to right). See Figures \ref{mass}(c) and (d) for an example, the white circles indicate the vertices $\{v_1,$ $\dotsc,v_{a+m-n}\}$.

(b) If the bottom row of cells in $D_a(d_1,\dotsc,d_k)$ is black, then
\begin{equation}\label{composeq2}
\M(G_a(d_1,\dotsc,d_k)\#G)=2^{C-h(a+m-n)}\M\left(\AR_{h-\frac{1}{2},a+m-n-1}\#G\right),
\end{equation}
where the connected sum acts on $G$ along $\{v_1,\dotsc,v_{a+m-n}\}$, and on
the other two summands along their bottom $a+m-n$ vertices (ordered from left to right). See Figures \ref{mass}(a) and (b) for an example.

\end{prop}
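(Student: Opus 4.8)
The plan is to deduce both formulas from the two elementary transformations $T_1$ and $T_2$ (Lemmas \ref{T1} and \ref{T2}), of which this proposition is the promised common generalization, by processing the rows of cells of $D_a(d_1,\dotsc,d_k)$ one at a time, from the top diagonal down toward $\ell$. The interface $\{v_1,\dotsc,v_{a+m-n}\}$ of $G_a(d_1,\dotsc,d_k)$ with $G$ along $\ell$ will be held fixed throughout; all of the reorganization happens on the Aztec-rectangle side.

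First I would establish the geometric dictionary. Applying the Vertex-Splitting Lemma (Lemma \ref{VS}) along the lattice diagonals, I claim that the dual graph of each row of cells of the region, taken together with the edges joining that row to the row immediately below it, is exactly a combed full Aztec rectangle ${}_{|}\AR_{p,q}$ or a combed baseless Aztec rectangle ${}_{|}\AR_{p-\frac12,q}$, the comb edges being those interface edges. Which of the two shapes occurs, and whether its width parameter $q$ goes up, stays fixed, or goes down, is dictated entirely by the black/white coloring of the row, i.e.\ by whether the row consists of squares, of up-pointing black triangles, or of down-pointing black triangles. This is where the parameters of the statement get pinned down: a row of up-pointing black triangles matches the input of $T_2$ and raises the width by one, a row of down-pointing black triangles matches the input of $T_1$ and lowers it by one, and a row of squares leaves the width unchanged, so that after all rows are processed the initial width $a$ has become $a+m-n$ and the accumulated Aztec rectangle has acquired height $h$, the number of rows of black regular (square or up-pointing) cells.

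With the dictionary in hand the induction is immediate: I recognize the current topmost combed piece as the ${}_{|}\AR$ appearing on the left of $T_1$ or $T_2$ (the rest of the region together with $G$ playing the role of the graph $G$ in those lemmas), apply the matching transformation---which, by Lemma \ref{spider} and the removal of the resulting forced edges, consumes the comb edges and that row while recording a factor $2^{p}$ (for $T_1$) or $2^{-p}$ (for $T_2$)---and then reads off from the newly exposed interface the comb edges needed for the next step, so that the hypotheses of $T_1$/$T_2$ are restored and the process continues. Carrying this all the way down to $\ell$ reorganizes $G_a(d_1,\dotsc,d_k)\#G$ into a single Aztec rectangle glued to $G$: a full rectangle $\AR_{h,a+m-n}$ when the last (bottom) row consumed is white, giving (\ref{composeq1}), and a baseless rectangle $\AR_{h-\frac12,a+m-n-1}$ when the bottom row is black, giving (\ref{composeq2}). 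Thus parts (a) and (b) are the two faces of the same computation, distinguished only by the color of the bottom row; see Figure \ref{mass} for the two model cases.

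The remaining task is to verify that the accumulated factors of $2$ multiply to the asserted exponents. Collecting the $2^{\pm p}$ from the successive applications of $T_1$ and $T_2$ (together with the powers of $2$ produced internally by the Spider and Star Lemmas, Lemmas \ref{spider} and \ref{star}, inside those transformations), one gets a product whose exponent telescopes; evaluating it using that $C$ is the total number of black squares and up-pointing triangles distributed over the $h$ black regular rows yields $C-h(a+m-n+1)$ in case (a) and $C-h(a+m-n)$ in case (b). I expect the genuine difficulty to be the geometric dictionary of the second paragraph: checking, uniformly across every parity of the distances $d_i$ and both colorings of the bottom row, that each row plus its interface edges really is one of the two combed Aztec-rectangle shapes and that the comb edges regenerate correctly after each reduction, so that $T_1$ and $T_2$ can be chained without mismatch. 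Once that is in place, the power-of-two accounting, though it must be done with care, is routine.
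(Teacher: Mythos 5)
Your high-level plan --- chain $T_1$ and $T_2$ from the top diagonal down toward $\ell$ and let the powers of $2$ telescope --- is the same idea as the paper's, and your net accounting (black down-pointing rows cost one unit of width via $T_1$, black up-pointing rows add one via $T_2$, so the width ends at $a+m-n$ while the height accumulates to $h$) is the correct geometric bookkeeping. But the mechanism you propose for realizing it, the ``geometric dictionary,'' is false, and this is a genuine gap rather than mere imprecision. A single row of cells together with its interface edges to the row below is never a combed Aztec rectangle: ${}_|\AR_{p,q}$ spans $2p+1$ rows of cells and ${}_|\AR_{p-\frac12,q}$ spans $2p$ rows, i.e.\ these graphs are the dual graphs of \emph{entire layers} (the portion between two consecutive drawn diagonals, with $d_i=2p$ even, resp.\ $d_i=2p-1$ odd), including both triangle rows and all intervening square rows; the smallest admissible input to $T_1$ is three rows of cells and to $T_2$ is two. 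Consequently $T_1$ and $T_2$ cannot be invoked one row at a time. Most visibly, your scheme has no operation at all for a row of squares --- neither transformation ``leaves the width unchanged'' while consuming a single row; each consumes a whole layer and changes both the shape and the width --- and your assertion that after consuming one row ``the hypotheses of $T_1$/$T_2$ are restored'' cannot hold, because the leftover graph after such a step is not of the required combed form.

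The missing idea that makes the argument close is the one the paper's induction rests on: the unit of processing is the layer, the choice between $T_1$ and $T_2$ is dictated by the parity of $d_1$ (equivalently, by whether the black triangle row at the interface below the top layer points down or up --- this is where your row/transformation correspondence survives, but only in effect, not as an algorithm), and, crucially, after applying $T_1$ (resp.\ $T_2$) to the whole top layer the resulting graph is again the dual graph of a generalized Douglas region, namely $D_{a-1}(d_1+d_2-1,d_3,\dotsc,d_k)$ (resp.\ $D_{a+1}(d_1+d_2+1,d_3,\dotsc,d_k)$), with one fewer layer. This re-identification is what permits iteration (formally, induction on $k$, with base case $G_a(d_1)=\AR_{h,a}$, $m=n=0$), and it converts your vague ``telescoping'' into the concrete per-step checks the paper performs: $a'=a\mp1$, $m'-n'=m-n\pm1$, $h'=h$, and $C-C'=\pm p$, which together keep the exponent $C-h(a+m-n+1)$ (resp.\ $C-h(a+m-n)$) invariant. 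Without identifying the intermediate object after each step as a Douglas-region dual graph, your induction has nothing to stand on, so the proposal as written fails at its central step.
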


\begin{figure}\centering
\includegraphics[width=12cm]{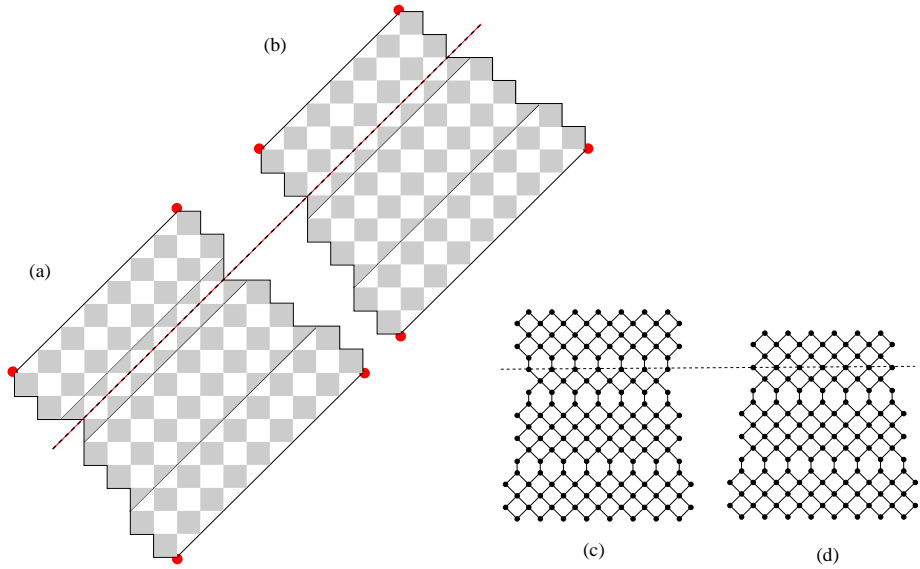}
\caption{Two regions (a) $D_{7}(4,2,5,4)$ and (b) $D_{6}(5,5,4)$; and their dual graphs (c) $G_{7}(4,2,5,4)$ and (d) $G_{6}(5,5,4)$.}
\label{compare1}
\end{figure}

\begin{proof} (a) We prove (\ref{composeq1}) by induction on $k$. For $k=1$, the dual graph $G_a(d_1,\dotsc,d_k)$ of the region is just the Aztec rectangle $\AR_{h,a}$, and $m=n=0$. In particular, the number $C$ of black cells is equal to $h(a+1)$, and (\ref{composeq1}) holds.

For the induction step, suppose $k>1$ and assume (\ref{composeq1}) holds for all regions with strictly less than $k$ \textit{layers} (the layers of $G_a(d_1,\dotsc,d_k)$ correspond to the layers of $D_a(d_1,\dotsc,d_k)$, which are the portions of the latter between consecutive diagonals). Note that the top layer of $G_a(d_1,\dotsc,d_k)$ is either an Aztec rectangle or a baseless Aztec rectangle, based on whether $d_1$ is even or odd, respectively. We treat the two cases separately.

Suppose that the top layer of $G_a(d_1,\dotsc,d_k)$ is the Aztec rectangle $\AR_{p,a}$, i.e. $d_1$ is even and $p=\frac{d_1}{2}$. Applying the transformation $T_1$ of Lemma \ref{T1} with respect to this layer, we obtain
\begin{equation}\label{composeq3}
\M(G_a(d_1,\dotsc,d_k)\#G)=2^p\M(K\#G),
\end{equation}
where $K$ is the graph obtained from $G_a(d_1,\dotsc,d_k)$ by replacing its top part, isomorphic to the combed Aztec rectangle ${}_|\AR_{p,a}$, by the baseless Aztec rectangle $\AR_{p-\frac12,a-1}$. Note that the graph $K$ is also the dual of a generalized Douglas region $D'$ with white bottom row of cells, but the number of layers of $K$ is $k-1$ (see Figures \ref{compare1}(c) and (d) for an example; the subgraph above the dotted line in graph (c) is replaced by the subgraph above that line in graph (d)). In precise, $D'=D_{a-1}(d_1+d_2-1,d_3,\dotsc,d_{k})$. Note that $a-1$ is the width of the second layer in $G_{a}(d_1,\dotsc,d_k)$, so $a-1>0$.

Therefore, by the induction hypothesis, we have
\begin{equation}\label{composeq4}
\M(K\#G)=2^{C'-h'(a'+m'-n'+1)}\M(\AR_{h',a'+m'-n'}\#G),
\end{equation}
where the primed symbols refer to the region $D'$ and denote quantities corresponding to their unprimed counterparts in $D_a(d_1,\dotsc,d_k)$.

It is clear by definition of $D'$ that $a'=a-1$. Two regions $D_a(d_1,\dotsc,d_k)$ and $D'$ are different only in their top parts, and their bottom parts are the same (see Figures \ref{compare1}(a) and (b); the parts below the dotted line in two regions are the same). The number of rows of black up-pointing triangles is the same in $D_a(d_1,\dotsc,d_k)$ and $D'$, but the latter has one fewer rows of black down-pointing triangles (because the former region has such a row just below its top layer, and that is no longer present in $D'$). Thus $m'=m$, and $n'=n-1$. Moreover, the numbers of rows of black squares in $D_{a}(d_1,\dotsc,d_k)$ and $D'$ are equal, so we get $h=h'$.
Finally, one easily sees that $C-C'=p(a+1)-pa=p$. Therefore, we obtain
\begin{equation}\label{composeq5}
C-h(a+m-n+1)-\{C'-h'(a'+m'-n'+1)\}=p.
\end{equation}
Then, by (\ref{composeq3})--(\ref{composeq5}), we have
\begin{equation*}\label{composeq6}
\M(G_a(d_1,\dotsc,d_k)\#G)=2^{C-h(a+m-n+1)}\M(\AR_{h,a+m-n}\#G),
\end{equation*}
which completes the induction step in this case.


\begin{figure}\centering
\includegraphics[width=12cm]{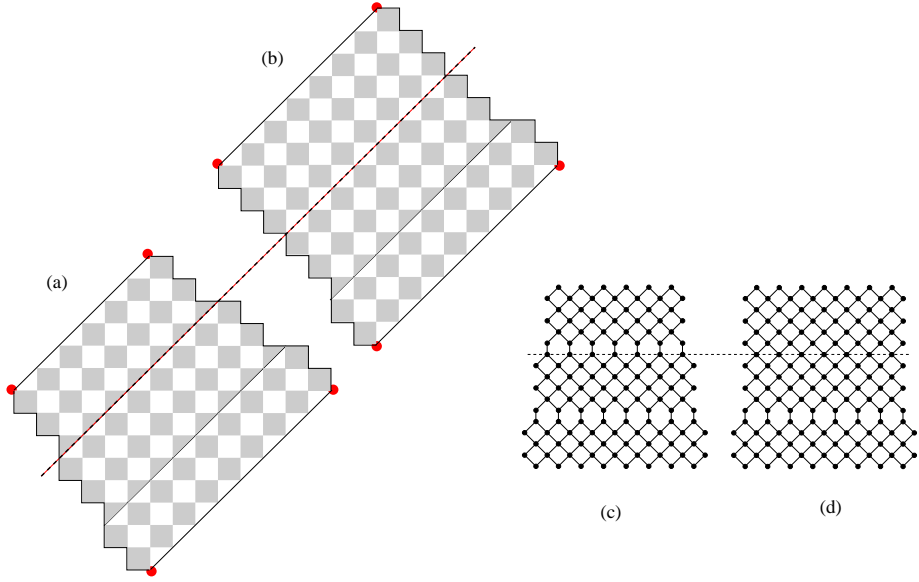}
\caption{Two regions (a) $D_{6}(5,5,4)$ and (b) $D_{7}(11,4)$; and their dual graphs (c) $G_{6}(5,5,4)$ and (d) $G_{7}(11,4)$.}
\label{compare2}
\end{figure}

Suppose now that the top layer of $G_a(d_1,\dotsc,d_k)$ is isomorphic to the baseless Aztec rectangle $\AR_{p-\frac12,a}$, i.e. $d_1$ is odd and $p=\frac{d_1+1}{2}$. Applying this time the transformation $T_2$ of Lemma \ref{T2} with respect to this layer, we obtain that
\begin{equation}\label{composeq7}
\M(G_a(d_1,\dotsc,d_k)\#G)=2^{-p}\M(K\#G),
\end{equation}
where $K$ is now the graph obtained from $G_a(d_1,\dotsc,d_k)$ by replacing its top part, isomorphic to the combed baseless Aztec rectangle ${}_|\AR_{p-\frac12,a}$, by the Aztec rectangle $\AR_{p,a+1}$ (see Figures \ref{compare2}(c) and (d) for an example). Again, $K$ is the dual of a generalized Douglas region $D':=D_{a+1}(d_1+d_2+1,d_3,\dotsc,d_{k})$ of the same kind as $D_a(d_1,\dotsc,d_k)$ (i.e., with a white bottom row of cells), but with fewer layers. 
Therefore, by the induction hypothesis we  have again the equality (\ref{composeq4}).

Again, it is clear that $a'=a+1$, and two regions $D_a(d_1,\dotsc,d_k)$ and $D'$ are different only in their top layers (see Figures \ref{compare2}(a) and (b) for an example). 
One can verify that $m'=m-1$ and $n'=n$ in this case. Moreover, the number of rows of black squares is now one fewer in $D_{a}(d_1,\dotsc,d_k)$ than in $D'$, thus we still have $h=h'$.
Finally, one readily sees that $C-C'$ equals now $p(a+1)-p(a+2)=-p$. This yields
\begin{equation}\label{composeq9}
C-h(a+m-n+1)-\{C'-h'(a'+m'-n'+1)\}=-p,
\end{equation}
which in turn, by (\ref{composeq7}), (\ref{composeq4}) and (\ref{composeq9}), implies
\begin{equation*}\label{composeq10}
\M(G_a(d_1,\dotsc,d_k)\#G)=2^{C-h(a+m-n+1)}\M(\AR_{h,a+m-n}\#G).
\end{equation*}
This completes the induction step in the second case, and thus also the proof of (\ref{composeq1}). Part (b) is proved by a perfectly analogous argument.
\end{proof}


The enumeration of tilings of the region $D_a(d_1,\dotsc,d_k)$ follows easily using the above result.
\medskip
\begin{proof}[Proof of Theorem \ref{douggen}.]
 Apply Proposition \ref{composite} with $G$ chosen to be the graph consisting of $a+m-n$ vertices and no edges. If the bottom row of cells in $D_a(d_1,\dotsc,d_k)$ is black, then (\ref{composeq2}) implies that
\begin{equation}\label{dougeq1}
\M(G_a(d_1,\dotsc,d_k))=2^{C-h(a+m-n)}\M\left(\AR_{h-\frac12,a+m-n-1}\right).
\end{equation}
Since a baseless Aztec rectangle has no perfect matchings (because it is impossible to cover all its bottom vertices by disjoint edges), it follows that in this case $\M(G_a(d_1,\dotsc,d_k))=0$.

Assume therefore that the bottom row of cells in $D_a(d_1,\dotsc,d_k)$ is white. Then with $G$ chosen like above, (\ref{composeq1}) implies that
\begin{equation}\label{dougeq2}
\M(G_a(d_1,\dotsc,d_k))=2^{C-h(a+m-n+1)}\M(\AR_{h,a+m-n}).
\end{equation}
It follows thus that unless $h=a+m-n$, one again has $\M(G_a(d_1,\dotsc,d_k))=0.$ This proves part (a) of the theorem. Part (b) follows directly from (\ref{dougeq2}) and the Aztec diamond theorem (\ref{diamond}). \end{proof}

\section{Proof of Theorem \ref{main}.}

Our proof uses the composite transformations of Proposition \ref{composite} to transform the dual graph of a generalized quasi-hexagon into a honeycomb graph whose perfect matchings are enumerated by MacMahon's formula (\ref{McMahon}).

\begin{figure}\centering

\begin{picture}(0,0)%
\includegraphics{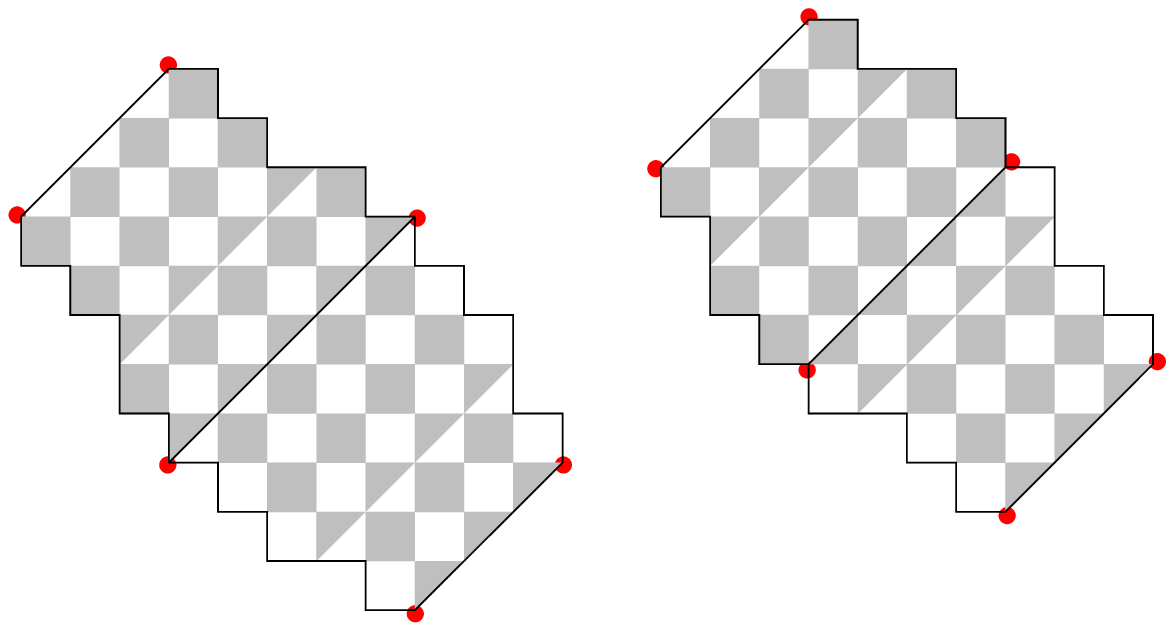}%
\end{picture}%
\setlength{\unitlength}{3947sp}%
\begingroup\makeatletter\ifx\SetFigFont\undefined%
\gdef\SetFigFont#1#2#3#4#5{%
  \reset@font\fontsize{#1}{#2pt}%
  \fontfamily{#3}\fontseries{#4}\fontshape{#5}%
  \selectfont}%
\fi\endgroup%
\begin{picture}(5994,3441)(913,-4120)
\put(928,-1906){\makebox(0,0)[lb]{\smash{{\SetFigFont{12}{14.4}{\familydefault}{\mddefault}{\updefault}{$A$}%
}}}}
\put(1564,-3448){\makebox(0,0)[lb]{\smash{{\SetFigFont{12}{14.4}{\familydefault}{\mddefault}{\updefault}{$B$}%
}}}}
\put(3229,-4105){\makebox(0,0)[lb]{\smash{{\SetFigFont{12}{14.4}{\familydefault}{\mddefault}{\updefault}{$C$}%
}}}}
\put(3889,-3460){\makebox(0,0)[lb]{\smash{{\SetFigFont{12}{14.4}{\familydefault}{\mddefault}{\updefault}{$D$}%
}}}}
\put(3142,-1831){\makebox(0,0)[lb]{\smash{{\SetFigFont{12}{14.4}{\familydefault}{\mddefault}{\updefault}{$E$}%
}}}}
\put(1762,-1183){\makebox(0,0)[lb]{\smash{{\SetFigFont{12}{14.4}{\familydefault}{\mddefault}{\updefault}{$F$}%
}}}}
\put(3913,-1894){\makebox(0,0)[lb]{\smash{{\SetFigFont{12}{14.4}{\familydefault}{\mddefault}{\updefault}{$A$}%
}}}}
\put(4660,-3088){\makebox(0,0)[lb]{\smash{{\SetFigFont{12}{14.4}{\familydefault}{\mddefault}{\updefault}{$B$}%
}}}}
\put(6037,-3634){\makebox(0,0)[lb]{\smash{{\SetFigFont{12}{14.4}{\familydefault}{\mddefault}{\updefault}{$C$}%
}}}}
\put(6742,-2923){\makebox(0,0)[lb]{\smash{{\SetFigFont{12}{14.4}{\familydefault}{\mddefault}{\updefault}{$D$}%
}}}}
\put(6097,-1642){\makebox(0,0)[lb]{\smash{{\SetFigFont{12}{14.4}{\familydefault}{\mddefault}{\updefault}{$E$}%
}}}}
\put(4843,-889){\makebox(0,0)[lb]{\smash{{\SetFigFont{12}{14.4}{\familydefault}{\mddefault}{\updefault}{$F$}%
}}}}
\end{picture}
\caption{Two regions $H_3(5,3;\ 3,5)$ (left) and $H_3(3,4;\ 5,2)$ (right)}
\label{conhex}
\end{figure}

\begin{figure}\centering
\includegraphics[width=8cm]{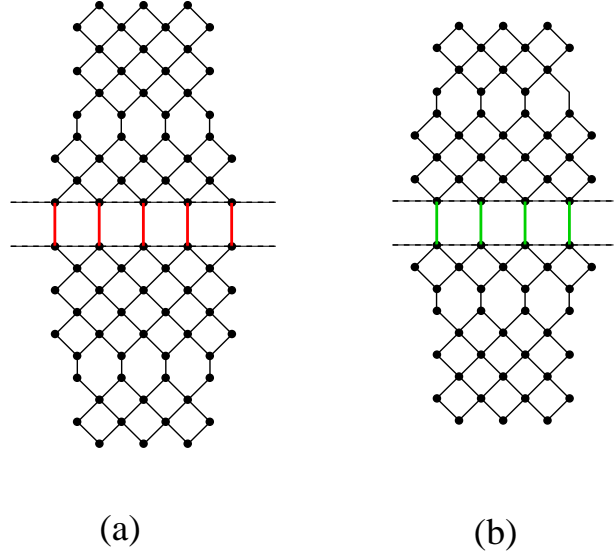}
\caption{The graphs (a) $G_{3}(5,3;\ 3,5)$ and (b) $G_{3}(3,4;\ 5,3)$.}
\label{conhex2}
\end{figure}

\medskip
\begin{proof} [Proof of Theorem \ref{main}.]
First,  we consider the case where the bottom row of cells is white. Then we have the same situation when showing that a baseless Aztec rectangle does not have a perfect matching. It is also impossible to cover the bottom vertices in the dual graph by disjoint edges. Therefore the dual graph has no perfect matchings, i.e. the region has no tilings. Hereafter, we assume that the bottom row of cells of our region is black.


There are two cases to distinguish, depending on whether the row of triangular cells just above $\ell$ and with bases resting on $\ell$ is black or white (these two cases are illustrated in Figure \ref{conhex}; the corresponding dual graphs are shown in Figure \ref{conhex2}).

Suppose the row of cells just above $\ell$ consists of black triangles (note that this is the case for our motivating problem; see Figure \ref{hexagonp}). Denote by
\[G_a(d_1,\dotsc,d_k;\ d'_1,\dotsc,d'_l)\]
 the dual graph of the generalized quasi-hexagon $H_a(d_1,\dotsc,d_k;d'_1,\dotsc,d'_l).$

Apply the composite transformation in Proposition \ref{composite}(b) separately to the portions of $G_a(d_1,\dotsc,d_k;\ d'_1,\dotsc,d'_l)$ corresponding to the parts above and below $\ell$ of the region $H$ (which we will call the \textit{upper} and \textit{lower parts} of the dual graph; illustrated in Figure \ref{conhex2} as the parts above and below the dotted lines). We obtain that
\begin{subequations}\begin{align}\label{maineq1}
&\M(G_a(d_1,\dotsc,d_k;\ d'_1,\dotsc,d'_l))=2^{C-h(a+m-n)}2^{C'-h'(a'+m'-n')}\notag\\
&\qquad\times\M\left({}_|\AR_{h-\frac12,a+m-n-1}\#\AR_{h'-\frac12,a'+m'-n'-1}\right)\\
&\qquad=2^{C-h(a+m-n)}2^{C'-h'(a'+m'-n')}\M(G_{a+m-n-1}(2h-1; 2h'-1)),
\end{align}
\end{subequations}
where the connected sum acts along the bottom vertices of the two modified Aztec rectangles (ordered from left to right), and $h'$, $a'$, $m'$ and $n'$ are quantities having the same significance for the lower part of $G_a(d_1,\dotsc,d_k;\ d'_1,\dotsc,d'_l)$ as their unprimed correspondents have for its upper part (except that --- due to the geometry of $H_a(d_1,\dotsc,d_k;d'_1,\dotsc,d'_l)$ and the resulting coloring of its fundamental regions --- they count \textit{white} cells). The transformation involved in (\ref{maineq1}) is illustrated in Figures \ref{symmetricfig}(a) and~(b).

\begin{figure}\centering
\includegraphics[width=10cm]{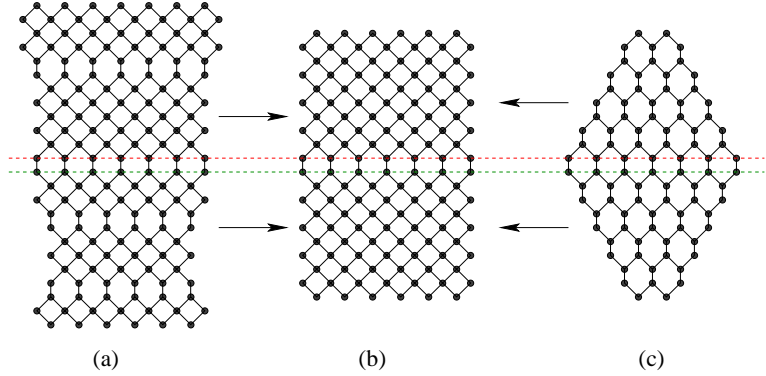}
\caption{The transforming process for generalized quasi-hexagons in the first case. The upper parts are above the dotted lines, the lower parts are below the dotted lines.}
\label{symmetricfig}
\end{figure}

As we have seen in the second paragraph of Section 4, $a+m-n$ is the number of triangular cells above $\ell$ with bases resting on $\ell$. By the same argument, $a'+m'-n'$ is the number of triangular cells below $\ell$, with bases resting on $\ell$. These two numbers are equal by construction, and therefore we have
\begin{equation}\label{maineq2}
a+m-n=a'+m'-n'.
\end{equation}
Clearly, a necessary condition for the graph on the right hand side of (\ref{maineq1}) (which is a bipartite graph) to have a perfect matching is that the numbers of its black and white vertices are the same. Denote for brevity $q:=a+m-n$. Then its upper part has $(q-1)h$ white and $qh$ black vertices, while its lower part has $(q-1)h'$ black and $qh'$ white vertices. The balancing condition is then
\begin{equation}\label{maineq2'}
(q-1)h+qh'=qh+(q-1)h',
\end{equation}
which is equivalent to $h=h'$. This proves in particular the rest of part (a) of the theorem.

To prove part (b), we consider first the case when $a+m-n>h$. The key idea here is to realize that the composite transformation of Proposition \ref{composite}(b) can also be used to turn a honeycomb graph into the graph on the right hand side of (\ref{maineq1}). More precisely, consider the honeycomb graph of sides $a+m-n-h,h,h,a+m-n-h,h,h$ (clockwise from top), 
illustrated in Figure \ref{symmetricfig}(c). Note that this is the dual graph of a generalized quasi-hexagon, in which all the inter-diagonal distances $d_i$'s and $d'_j$'s are equal to one  (i.e. the lozenge hexagonal region of the same sides)! Therefore equality (\ref{maineq1}) applies in this case, and the question is what is exactly its form in this particular case (see Figures \ref{symmetricfig} (c) and (b)). By symmetry, the primed and unprimed quantities of $C$-, $h$-, $a$-, $m$- and $n$-type are equal. Furthermore, one readily sees that the number of black regular cells above $\ell$ is
\begin{equation*}
h(a+m-n)-\binom{h}{2}.
\end{equation*}
The upper height is equal to $h$ and so is the $m$-parameter, while the $n$-parameter is 0. The $a$-parameter equals $a+m-n-h$. It follows that the particular form (\ref{maineq1}) takes is
\begin{equation}\label{maineq3}
\M(H_{a+m-n-h,h,h})=2^{-h(h-1)}\M\left({}_|\AR_{h-\frac12,a+m-n-1}\#\AR_{h-\frac12,a+m-n-1}\right).
\end{equation}

Equations (\ref{maineq1}) (in which $a'+m'-n'=a+m-n$ by (\ref{maineq2}), and $h'=h$ by assumption) and (\ref{maineq3}) imply the statement of part (b) of the theorem, provided $a+m-n-h>0$.


\begin{figure}\centering%
\begin{picture}(0,0)%
\includegraphics{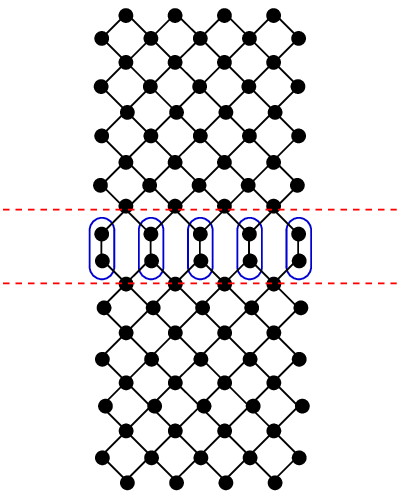}%
\end{picture}%
\setlength{\unitlength}{3947sp}%
\begingroup\makeatletter\ifx\SetFigFont\undefined%
\gdef\SetFigFont#1#2#3#4#5{%
  \reset@font\fontsize{#1}{#2pt}%
  \fontfamily{#3}\fontseries{#4}\fontshape{#5}%
  \selectfont}%
\fi\endgroup%
\begin{picture}(1917,2321)(3057,-2614)
\put(3000,-815){\makebox(0,0)[lb]{\smash{{\SetFigFont{12}{14.4}{\rmdefault}{\mddefault}{\updefault}{$\AR^{(1)}$}%
}}}}
\put(3000,-2114){\makebox(0,0)[lb]{\smash{{\SetFigFont{12}{14.4}{\rmdefault}{\mddefault}{\updefault}{$\AR^{(2)}$}%
}}}}
\end{picture}
\caption{}
\label{Ben1s}
\end{figure}

Note that if $a+m-n=h$, then the lozenge hexagonal region of side-lengths $a+m-n-h(=0),h,h,a+m-n-h(=0),h,h$ is \textit{not} a member of our family of quasi-hexagons anymore (in the definition of a quasi-hexagon, we always assume that $A$ and $F$ are two distinct lattice vertices); and if $a+m-n<h$, then the lozenge hexagonal region with side-lengths above does not exist (two side-lengths are negative). This means that the method used to prove (\ref{maineq}) when $a+m-n>h$ does \textit{not} work for the case when $a+m-n\leq h$.

\medskip
To complete the proof of (\ref{maineq}), we need to show that the graph
\[\overline{G}:={}_|\AR_{h-\frac12,a+m-n-1}\#\AR_{h-\frac12,a+m-n-1}\]
 has $2^{h(h-1)}$ perfect matchings when $a+m-n=h$, and that it has no perfect matchings when $a+m-n<h$. The top $2h-1$ rows of its vertices induce a subgraph isomorphic to $\AR_{h-1,a+m-n-1}$, and the bottom $2h-1$ rows of its vertices induce also a subgraph isomorphic to $\AR_{h-1,a+m-n-1}$. Denote by $\AR^{(1)}$ and $\AR^{(2)}$ the two Aztec rectangles (illustrated by the subgraph above and the subgraph below two dotted lines in Figure \ref{Ben1s}).  If $a+m-n<h$, then $\AR^{(1)}$ satisfies the conditions in  Graph-Splitting Lemma \ref{graphsplitting}(b) as an induced subgraph of $\overline{G}$, and thus $M(\overline{G})=0$.

 Assume that $a+m-n=h$, then the two Aztec rectangles $\AR^{(1)}$ and $\AR^{(2)}$ become two Aztec diamonds of order $(h-1)$. The Aztec rectangle $\AR^{(1)}$ satisfies the conditions in Graph-splitting Lemma \ref{graphsplitting}(a) as an induced subgraph of $\overline{G}$, and the rectangle $AR^{(2)}$ satisfies also the conditions of this lemma as an induced subgraph of $\overline{G}-\AR^{(1)}$. Therefore
\begin{subequations}
\begin{align}\label{power2}
\M(\overline{G})&=\M(\AR^{(1)})\M(\overline{G}-\AR^{(1)})\\
&=\M(\AR^{(1)})\M(\AR^{(2)})\M(\overline{G}-\AR^{(1)}-\AR^{(2)})\\
&=2^{h(h-1)/2}2^{h(h-1)/2},
\end{align}
\end{subequations}
where graph $\overline{G}-\AR^{(1)}-\AR^{(2)}$ consists of $h$ disjoint vertical edges (see the circled edges in Figure \ref{Ben1s}), so it has exactly one perfect matching. This finishes the proof of part (b) of the theorem in the case when the triangular cells above $\ell$ with bases resting on it are black.

\begin{figure}\centering
\includegraphics[width=11cm]{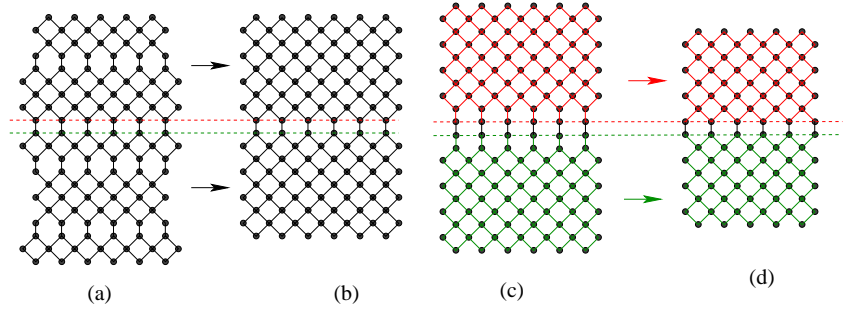}
\caption{The transforming process for generalized quasi-hexagons in the second case.}
\label{symmetricfig2}
\end{figure}

Suppose now that the latter are white. Apply in this case the composite transformation in Proposition \ref{composite}(a) to the upper and lower parts of the graph $G_a(d_1,\dotsc,d_k;d'_1,\dotsc,d'_l)$. We obtain that
\begin{subequations}
\begin{align}\label{maineq4}
&\M(G_a(d_1,\dotsc,d_k;d'_1,\dotsc,d'_l))\notag\\
&
\qquad=2^{C-h(a+m-n+1)}2^{C'-h'(a'+m'-n'+1)}\M\left({}_|\AR_{h,a+m-n}\#\AR_{h',a'+m'-n'}\right)\\
&
\qquad=2^{C-h(a+m-n+1)}2^{C'-h'(a'+m'-n'+1)}\M(G_{a+m-n}(2h;2h')),
\end{align}
\end{subequations}
where the connected sum acts along the bottom vertices of the two Aztec rectangles (ordered from left to right), and $h'$, $a'$, $m'$ and $n'$ are quantities having the same significance for the lower part of $G_a(d_1,\dotsc,d_k;d'_1,\dotsc,d'_l)$ as their unprimed correspondents have for its upper part (except that, as in the first case, they now count white cells). The transformation involved in (\ref{maineq4}) is illustrated in Figures \ref{symmetricfig2}(a) and~(b). Apply the Vertex-splitting Lemma \ref{VS} to all bottom vertices of ${}_|\AR_{h,a+m-n}$ in the graph on the right hand side of (\ref{maineq4}) (see Figures \ref{symmetricfig2}(b) and~(c)). Next, apply the transformation $T_1$ of Lemma \ref{T1} to the top and bottom of the resulting graph, the latter is transformed into the graph on the right hand side of (\ref{maineq1}) (see Figures \ref{symmetricfig2}(c) and (d)). Then both parts (a) and (b) of the theorem are reduced to the case treated above.~\end{proof}


\section{Asymmetric quasi-hexagons}

The generalized quasi-hexagons we treated so far had the special property that the point on the southwestern boundary where its defining property ``black on left'' changed to ``white on left'' (i.e. vertex $B$) was on the same southwest-to-northeast lattice diagonal as the analogous point on the northeastern boundary (i.e. vertex $E$). Therefore, the perpendicular bisector of segment $AF$ is the symmetric axis of the quasi-hexagons. These include as a special case Propp's quasi-hexagons of side-lengths $a$, $b$, $b$, $a$, $b$, $b$ (clockwise from top), and will be called from now on symmetric (generalized) quasi-hexagons.

\begin{figure}\centering
\resizebox{!}{10cm}{
\begin{picture}(0,0)%
\includegraphics{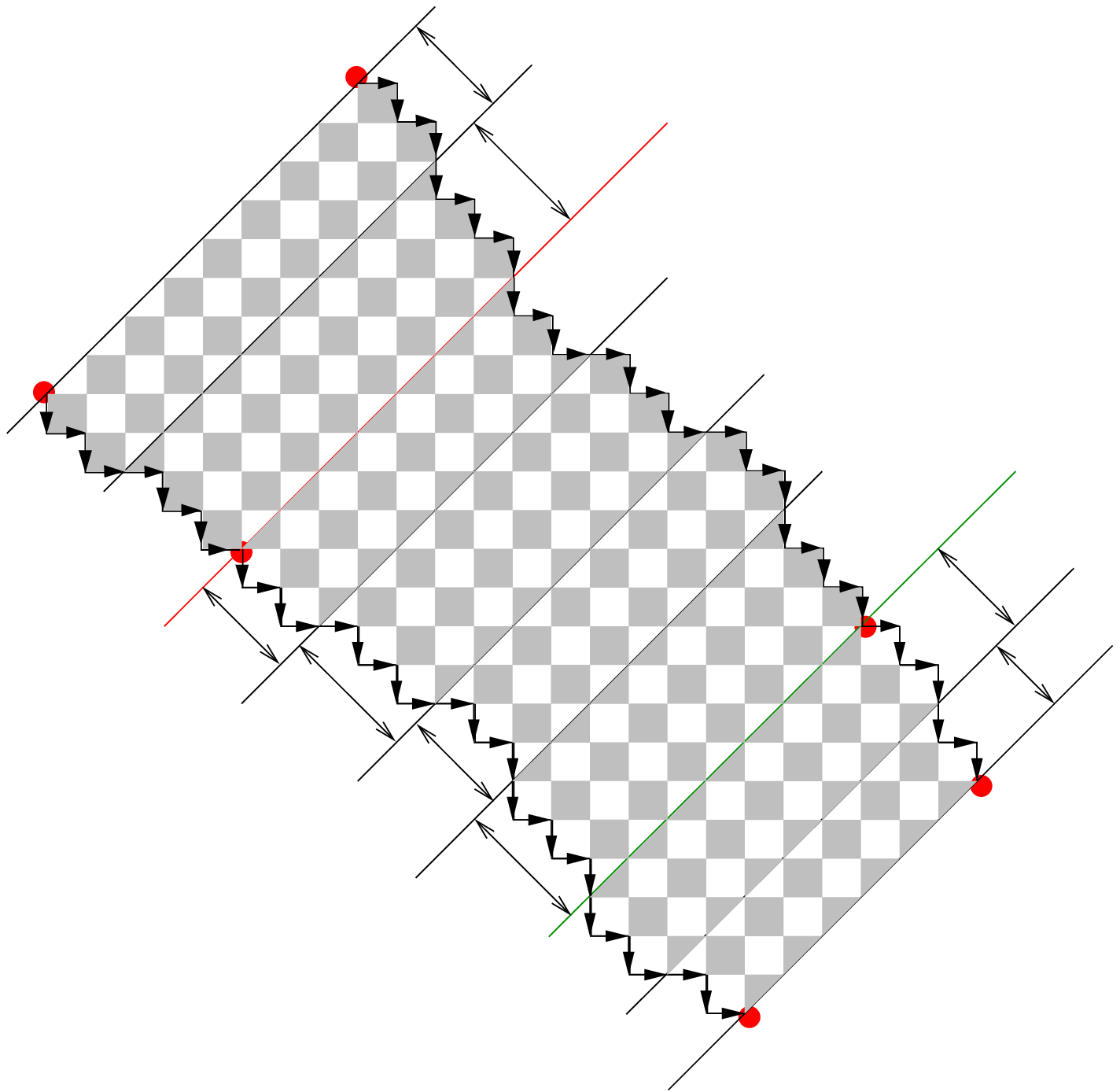}%
\end{picture}%
\setlength{\unitlength}{3947sp}%
\begingroup\makeatletter\ifx\SetFigFont\undefined%
\gdef\SetFigFont#1#2#3#4#5{%
  \reset@font\fontsize{#1}{#2pt}%
  \fontfamily{#3}\fontseries{#4}\fontshape{#5}%
  \selectfont}%
\fi\endgroup%
\begin{picture}(6845,6664)(609,-7199)
\put(4786,-1606){\makebox(0,0)[lb]{\smash{{\SetFigFont{12}{14.4}{\rmdefault}{\mddefault}{\updefault}{$\ell$}%
}}}}
\put(6497,-3413){\makebox(0,0)[lb]{\smash{{\SetFigFont{12}{14.4}{\rmdefault}{\mddefault}{\updefault}{$\ell'$}%
}}}}
\put(624,-2904){\makebox(0,0)[lb]{\smash{{\SetFigFont{12}{14.4}{\rmdefault}{\mddefault}{\updefault}{$A$}%
}}}}
\put(1501,-4081){\makebox(0,0)[lb]{\smash{{\SetFigFont{12}{14.4}{\rmdefault}{\mddefault}{\updefault}{$B$}%
}}}}
\put(5409,-6946){\makebox(0,0)[lb]{\smash{{\SetFigFont{12}{14.4}{\rmdefault}{\mddefault}{\updefault}{$C$}%
}}}}
\put(2814,-796){\makebox(0,0)[lb]{\smash{{\SetFigFont{12}{14.4}{\rmdefault}{\mddefault}{\updefault}{$F$}%
}}}}
\put(5964,-3909){\makebox(0,0)[lb]{\smash{{\SetFigFont{12}{14.4}{\rmdefault}{\mddefault}{\updefault}{$E$}%
}}}}
\put(6781,-5619){\makebox(0,0)[lb]{\smash{{\SetFigFont{12}{14.4}{\rmdefault}{\mddefault}{\updefault}{$D$}%
}}}}
\put(1861,-4561){\makebox(0,0)[lb]{\smash{{\SetFigFont{12}{14.4}{\rmdefault}{\mddefault}{\updefault}{$c_1$}%
}}}}
\put(2476,-4996){\makebox(0,0)[lb]{\smash{{\SetFigFont{12}{14.4}{\rmdefault}{\mddefault}{\updefault}{$c_2$}%
}}}}
\put(3151,-5431){\makebox(0,0)[lb]{\smash{{\SetFigFont{12}{14.4}{\rmdefault}{\mddefault}{\updefault}{$c_3$}%
}}}}
\put(3481,-5934){\makebox(0,0)[lb]{\smash{{\SetFigFont{12}{14.4}{\rmdefault}{\mddefault}{\updefault}{$c_4$}%
}}}}
\put(6796,-3894){\makebox(0,0)[lb]{\smash{{\SetFigFont{12}{14.4}{\rmdefault}{\mddefault}{\updefault}{$d'_2$}%
}}}}
\put(4209,-1321){\makebox(0,0)[lb]{\smash{{\SetFigFont{12}{14.4}{\rmdefault}{\mddefault}{\updefault}{$d_2$}%
}}}}
\put(3564,-751){\makebox(0,0)[lb]{\smash{{\SetFigFont{12}{14.4}{\rmdefault}{\mddefault}{\updefault}{$d_1$}%
}}}}
\put(7156,-4359){\makebox(0,0)[lb]{\smash{{\SetFigFont{12}{14.4}{\rmdefault}{\mddefault}{\updefault}{$d'_1$}%
}}}}
\end{picture}}
\caption{The asymmetric quasi-hexagon $H_{8}(4,5;\ 4,5,4,5;\ 3,4)$.}
\label{asymmetrichex}
\end{figure}

In order to include the general case of Propp's quasi-hexagons when the side-lengths are $a$, $b$, $c$, $a$, $b$, $c$, we extend the definition of our generalized quasi-hexagons as follows. Besides $\ell$, we consider a second distinguished lattice diagonal $\ell'$ below it, and draw in also $t-1$ additional diagonals in between them, so that the distances between successive ones (starting from top) are $c_1,\dotsc,c_t$. As in the symmetric case, we consider $k$ diagonals above $\ell$ at successive distances (starting from top) $d_1,\dotsc,d_k$, and $l$ diagonals below $\ell'$, at successive distances (starting from the bottom) $d'_1,\dotsc,d'_l$. Define our generalized quasi-hexagon as in the symmetric case, with the one change that on the northeastern boundary we make the switch from the rule ``white on left'' to ``black on left'' on the diagonal $\ell'$, rather than on $\ell$. Denote the resulting region by $H_a(d_1,\dotsc,d_k;c_1,\dotsc,c_t;d'_1,\dotsc,d'_l)$; an illustrative example is shown in Figure \ref{asymmetrichex}.

As mentioned in Section 2, there is no simple product formula for the number of tilings of asymmetric quasi-hexagons. However, it turns out that we can construct a conceptually simple graph so that the number of tilings of an asymmetric quasi-hexagon is equal to a certain power of $2$ times the number of perfect matchings of this graph. We present this construction next.

Roughly speaking, the graph we need is a connected sum of a baseless Aztec rectangle and the bottom half of a honeycomb graph. To be precise, let $a$, $b$, $c$ be positive integers with $b\geq c$, and let $B_{a,b,c}$ be the portion of the honeycomb graph of sides $a$, $2b-c$, $c$, $a$, $2b-c$, $c$ (clockwise from top) that is below or on the horizontal through its center. Let $d$ and $e$ be positive integers, and consider the baseless Aztec rectangle $\AR_{d-\frac12,e}$. The latter has $e+1$ vertices on the bottom; label them from left to right by $1,2,\dotsc,e+1$. The graph $B_{a,b,c}$ has $a+c$ vertices at the top; label them from left to right by $1,2,\dotsc,a+c$.


\begin{figure}\centering%
\begin{picture}(0,0)%
\includegraphics{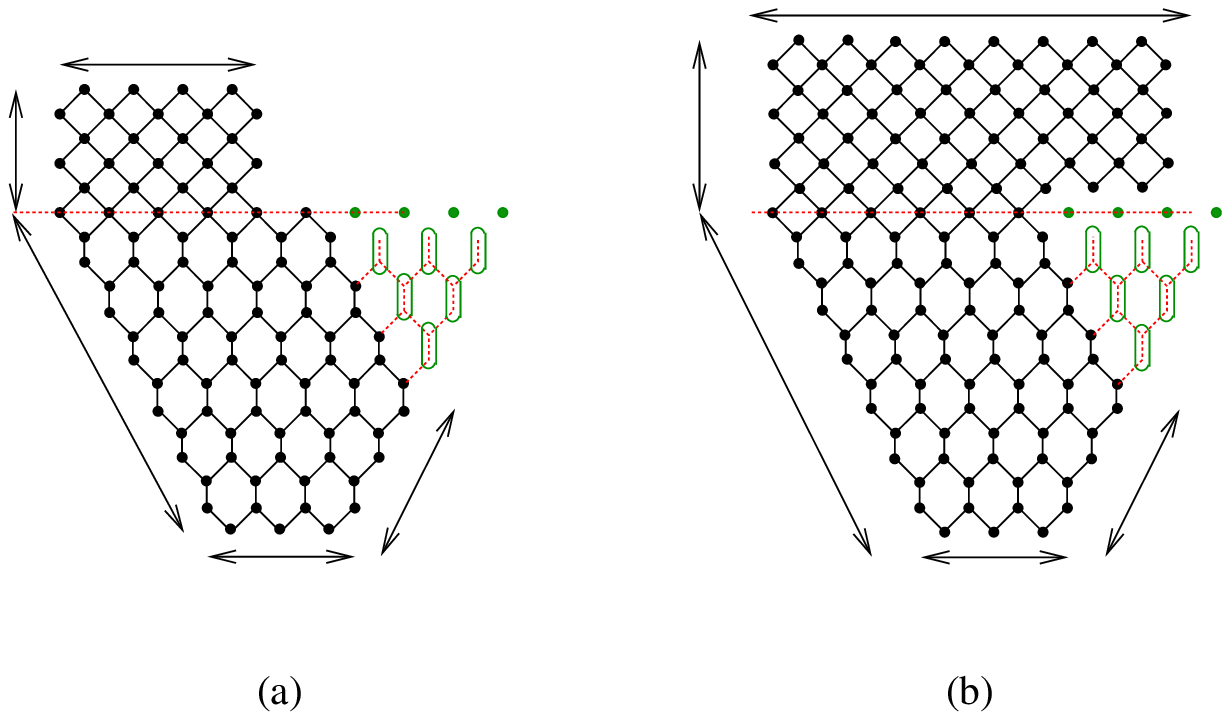}%
\end{picture}
\setlength{\unitlength}{3947sp}%
\begingroup\makeatletter\ifx\SetFigFont\undefined%
\gdef\SetFigFont#1#2#3#4#5{%
  \reset@font\fontsize{#1}{#2pt}%
  \fontfamily{#3}\fontseries{#4}\fontshape{#5}%
  \selectfont}%
\fi\endgroup%
\begin{picture}(6349,3716)(222,-3488)
\put(1891,-2941){\makebox(0,0)[lb]{\smash{{\SetFigFont{12}{14.4}{\rmdefault}{\mddefault}{\updefault}{$a=3$}%
}}}}
\put(2836,-2468){\makebox(0,0)[lb]{\smash{{\SetFigFont{12}{14.4}{\rmdefault}{\mddefault}{\updefault}{$c=3$}%
}}}}
\put(592,-1996){\makebox(0,0)[lb]{\smash{{\SetFigFont{12}{14.4}{\rmdefault}{\mddefault}{\updefault}{$b=7$}%
}}}}
\put(150,-696){\makebox(0,0)[lb]{\smash{{\SetFigFont{12}{14.4}{\rmdefault}{\mddefault}{\updefault}{$d=3$}%
}}}}
\put(3948,-2178){\makebox(0,0)[lb]{\smash{{\SetFigFont{12}{14.4}{\rmdefault}{\mddefault}{\updefault}{$b=7$}%
}}}}
\put(5250,-3005){\makebox(0,0)[lb]{\smash{{\SetFigFont{12}{14.4}{\rmdefault}{\mddefault}{\updefault}{$a=3$}%
}}}}
\put(6203,-2414){\makebox(0,0)[lb]{\smash{{\SetFigFont{12}{14.4}{\rmdefault}{\mddefault}{\updefault}{$c=3$}%
}}}}
\put(3450,-814){\makebox(0,0)[lb]{\smash{{\SetFigFont{12}{14.4}{\rmdefault}{\mddefault}{\updefault}{$d=4$}%
}}}}
\put(1182,-224){\makebox(0,0)[lb]{\smash{{\SetFigFont{12}{14.4}{\rmdefault}{\mddefault}{\updefault}{$e-1=4$}%
}}}}
\put(5080, 12){\makebox(0,0)[lb]{\smash{{\SetFigFont{12}{14.4}{\rmdefault}{\mddefault}{\updefault}{$e-1=8$}%
}}}}
\end{picture}%
\caption{Two graphs of solid edges $\Gamma^{3,4}_{3,7,3}$ (left) and  $\Gamma^{4,8}_{3,7,3}$ (right).}
\label{Cfigure}
\end{figure}

If $a+c\geq e+1$, define the graph  $\Gamma_{a,b,c}^{d,e}$ by
\begin{equation}
\Gamma_{a,b,c}^{d,e}:=\AR_{d-\frac12,e}\#B_{a,b,c},
\end{equation}
where the connected sum identifies the vertices labeled $i$ in the two components, for $i=1,\dotsc,e+1$.

On the other hand, if $a+c< e+1$, define $\Gamma_{a,b,c}^{d,e}$ by
\begin{equation}
\Gamma_{a,b,c}^{d,e}:=\AR^*_{d-\frac12,e}\#B_{a,b,c},
\end{equation}
where $\AR^*_{d-\frac12,e}$ is the subgraph of $\AR_{d-\frac12,e}$ obtained by deleting its vertices with labels greater than $a+c$, and the connected sum identifies the vertices labeled $i$ in the two components, for $i=1,\dotsc,a+c$. Examples illustrating the two cases are shown in Figure~\ref{Cfigure}.

For the sake of simplicity, we detail below the case when all distances between successive diagonals are odd (recall that the original problem, Problem 16 in \cite{Propp}, corresponds to the special case when all these distances are equal to 3). The general case is addressed in Theorem~\ref{asymmetric1}.

\begin{thm}\label{asymodd}
 Let $d_1,\dotsc,d_k$, $d'_1,\dotsc,d'_l$ and $c_1,\dotsc,c_k$ be odd integers, and set
\[h_0:=\sum_{i=1}^t\frac{c_i-1}{2}, \qquad h:=\sum_{i=1}^k\frac{d_i+1}{2}, \qquad h':=\sum_{i=1}^l\frac{d'_i+1}{2}.\]
Let $H:=H_a(d_1,\dotsc,d_k;c_1,\dotsc,c_t;d'_1,\dotsc,d'_l)$ be an asymmetric quasi-hexagon, and let $C$ be the number of black cells above $\ell$, and $C'$ the number of white cells below $\ell'$.

$(${\rm a}$)$. If $h\neq h'$, then $\M(H)=0$.

$(${\rm b}$)$. Suppose that $h=h'$ and $a+k>h$.  Then for any positive integer $a$ we have
\begin{align}\label{asymoddeq}
\M(H) &=2^{C+C'-h(2a+2k-h+1)}\notag\\
&\times
\begin{cases}
\M(H_{h,a+k-h,h+t}), &\text{\rm if  $h_0=0$,}\\
2^{-h_0(h_0-1)/2}\M\left(\Gamma_{a+k-h,h_0+2h+t,h}^{h_0,h_0+h-1}\right),
&\text{\rm if  $h_0>0$.}
\end{cases}
\end{align}

$(${\rm c}$)$. Suppose that $h=h'$. If $a+k<h$, then $\M(H)=0$. If $a+k=h$, then $\M(H)=2^{C+C'-h(2a+2k-h+1)}$.

\end{thm}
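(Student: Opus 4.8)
The plan is to mirror the three-step argument that proved the symmetric Theorem~\ref{main}, the only new ingredient being that the band of diagonals lying strictly between $\ell$ and $\ell'$ no longer collapses to a single diagonal. First I would record the degenerate cases. If the bottom row of cells were white, the dual graph could not match its bottom vertices and $\M(H)=0$, exactly as in the opening of the proof of Theorem~\ref{main}; since every $d'_j$ is odd this case does not in fact arise, but it should be noted. For part~(a), the dual graph $G(H)$ is bipartite, and I would run the black/white vertex count of~(\ref{maineq2'}): the part above $\ell$ carries a surplus of black over white vertices measured by $h$, the part below $\ell'$ carries the opposite surplus measured by $h'$, and the middle band is colour-balanced. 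Hence the two colour classes have equal cardinality precisely when $h=h'$, and when $h\neq h'$ the Graph Splitting Lemma~\ref{graphsplitting}(b) gives $\M(H)=0$.

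For parts (b) and (c) assume $h=h'$ and a black bottom row. The first substantive step is to apply Proposition~\ref{composite}(b) to the two Douglas pieces of $H$, namely the part above $\ell$ and the part below $\ell'$, treating the entire middle band as the auxiliary graph $G$. Because each $d_i$ is odd, every layer above $\ell$ is a baseless Aztec rectangle, and the width-propagation argument behind Corollary~\ref{odd-main} gives $m-n=k$; thus the width along $\ell$ is $a+k$ and the upper height is $h$, and symmetrically along $\ell'$. Mirroring the symmetric reduction~(\ref{maineq1}), the two applications replace $G(H)$ by
\[
2^{\,C-h(a+k)}\,2^{\,C'-h(a+k)}\,
\M\!\left({}_{|}\AR_{h-\frac12,\,a+k-1}\ \#\ \mathcal M\ \#\ \AR_{h-\frac12,\,a+k-1}\right),
\]
where $\mathcal M$ is the dual graph of the middle band and the connected sums act along the $a+k$ vertices on $\ell$ and on $\ell'$.

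The second step processes $\mathcal M$ and reassembles the graph. Since all $c_i$ are odd, the $t$ layers of the band are again baseless Aztec rectangles, and I would collapse them by the same alternating use of Transformations $T_1$ and $T_2$ (Lemmas~\ref{T1} and~\ref{T2}) and the Vertex-Splitting Lemma~\ref{VS} that drives the induction in Proposition~\ref{composite}; this is exactly where the band fails to disappear and instead leaves behind a baseless Aztec rectangle of parameter $h_0=\sum_i\frac{c_i-1}{2}$, accompanied by the factor $2^{-h_0(h_0-1)/2}$ visible in~(\ref{asymoddeq}). Inserting this between the two Douglas-derived rectangles, I would then recognize the whole reduced graph as a baseless Aztec rectangle glued onto the lower half of a honeycomb, i.e. as $\Gamma_{a+k-h,\,h_0+2h+t,\,h}^{\,h_0,\,h_0+h-1}$; the honeycomb-half $B_{a+k-h,h_0+2h+t,h}$ is built from the height-$h$ pieces by the reverse Composite Transformation, exactly as the full honeycomb was built in~(\ref{maineq3}). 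The $2^{h(h-1)}$ produced by that reverse step combines with the two prefactors $2^{C-h(a+k)}$ and $2^{C'-h(a+k)}$ to give precisely $2^{C+C'-h(2a+2k-h+1)}$. When $h_0=0$ the Aztec-rectangle summand is empty and $\Gamma$ is a genuine honeycomb, which is the source of the alternative $\M(H_{h,a+k-h,h+t})$.

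Part~(c) is the boundary analysis of this final graph, carried out just as in the proof of Theorem~\ref{main}. If $a+k<h$ the honeycomb-half contains an induced Aztec sub-rectangle with more white than black vertices, so Lemma~\ref{graphsplitting}(b) forces $\M(H)=0$. If $a+k=h$ the honeycomb-half degenerates and, as in~(\ref{power2}), the graph splits into Aztec diamonds together with a family of forced vertical edges, leaving a single weighted matching, so that only the prefactor $2^{C+C'-h(2a+2k-h+1)}$ survives. The step I expect to be the real obstacle is the reduction of $\mathcal M$: one must verify that the middle band collapses to a \emph{baseless} Aztec rectangle of exactly the parameter $h_0$ with exactly the power $2^{-h_0(h_0-1)/2}$, and that the three reduced blocks fit together with the correct side-lengths $h_0+2h+t$ and $h$ to form $\Gamma$. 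This is a careful but essentially routine propagation of widths and colours through the layers, after which the counts $C,C'$ and the exponent $2a+2k-h+1$ emerge by the same arithmetic as in~(\ref{composeq5}) and~(\ref{composeq9}).
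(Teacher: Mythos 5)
Your part (a) and the opening reduction of part (b) do follow the paper: applying Proposition \ref{composite}(b) to the portions above $\ell$ and below $\ell'$, with the middle band as the auxiliary graph, is exactly the paper's first step (it writes the result as $2^{C+C'-2h(a+k)}\M(H_1)$ with $H_1=H_{a+k-1}(2h-1;c_1,\dotsc,c_t;2h-1)$, which is your connected sum ${}_{|}\AR_{h-\frac12,a+k-1}\#\mathcal M\#\AR_{h-\frac12,a+k-1}$), and the color-count argument for $h\neq h'$ is the same. The genuine gap is in your second step, the reduction of the middle band $\mathcal M$. You propose to collapse it ``by the same alternating use of Transformations $T_1$ and $T_2$,'' but those lemmas apply to \emph{combed (baseless) Aztec rectangles}, i.e. to layers whose two lateral boundaries step symmetrically. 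Between $\ell$ and $\ell'$ both boundaries of an asymmetric quasi-hexagon follow the \emph{same} stepping rule (``white on left''), so every middle layer is sheared --- its dual graph is an Aztec rectangle with its leftmost or rightmost column of vertices deleted (the graphs $LR_{m,n}$, $RR_{m,n}$ of Section 6, cf. the left-odd/right-odd layers of Figure \ref{middle1}) --- and $T_1$, $T_2$ simply do not apply to it. This shear is precisely why the paper must introduce new machinery: Transformation $T_3$ (Lemma \ref{T3}) to consolidate the middle layers into a single one (Stage 1), and the two families of L-shaped graphs with the L-transformations of Lemma \ref{Ltransform}, used in Stages 3 and 4 to migrate the resulting ``missing vertices'' of the Aztec-rectangle part from the upper left around to the bottom right, which is what finally produces $\Gamma^{h_0,h_0+h-1}_{a+k-h,h_0+2h+t,h}$ with the baseless Aztec rectangle sitting \emph{on top of} the half-honeycomb. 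None of this is a routine width-and-color propagation, and your proposal never invokes (or reconstructs) any of these lemmas; the factor $2^{-h_0(h_0-1)/2}$ and the side-length $h_0+2h+t$ come out of the Stage 2--4 bookkeeping ($2^{-h}2^{h(h-1)/2}$, then $2^{h(h+1)/2}$, then $2^{-1},\dotsc,2^{-(h_0-1)}$), not from a single collapse of the band.

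Two smaller points. First, the paper organizes part (b) by a self-reduction: it checks that for $H_1$ the exponent $C+C'-h(2a+2k-h+1)$ equals $h(h-1)$, so the general case follows from the case $k=l=1$ of the theorem itself; your version implicitly does this, which is fine. Second, in part (c) you analyze ``the final graph,'' but the graph $\Gamma$ (and the honeycomb half $B_{a+k-h,\cdot,\cdot}$) does not exist when $a+k\leq h$; the paper instead applies the Graph Splitting Lemma directly to the dual graph $\widetilde G$ of $H_1$, splitting off the two induced order-$(h-1)$ Aztec diamonds and the forced vertical edges. Your description of the splitting is correct in substance but must be carried out on $\widetilde G$, not on a degenerate $\Gamma$.
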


In order to prove this result we will need some more graph transformations, which we present in the next two lemmas. Let $LR_{m,n}$ be the graph obtained from the Aztec rectangle $\AR_{m,n}$ by deleting its leftmost $m$ vertices, and let $RR_{m,n}$ be the graph obtained from $\AR_{m,n}$ by deleting its rightmost $m$ vertices.
\begin{figure}\centering
\includegraphics[width=14cm]{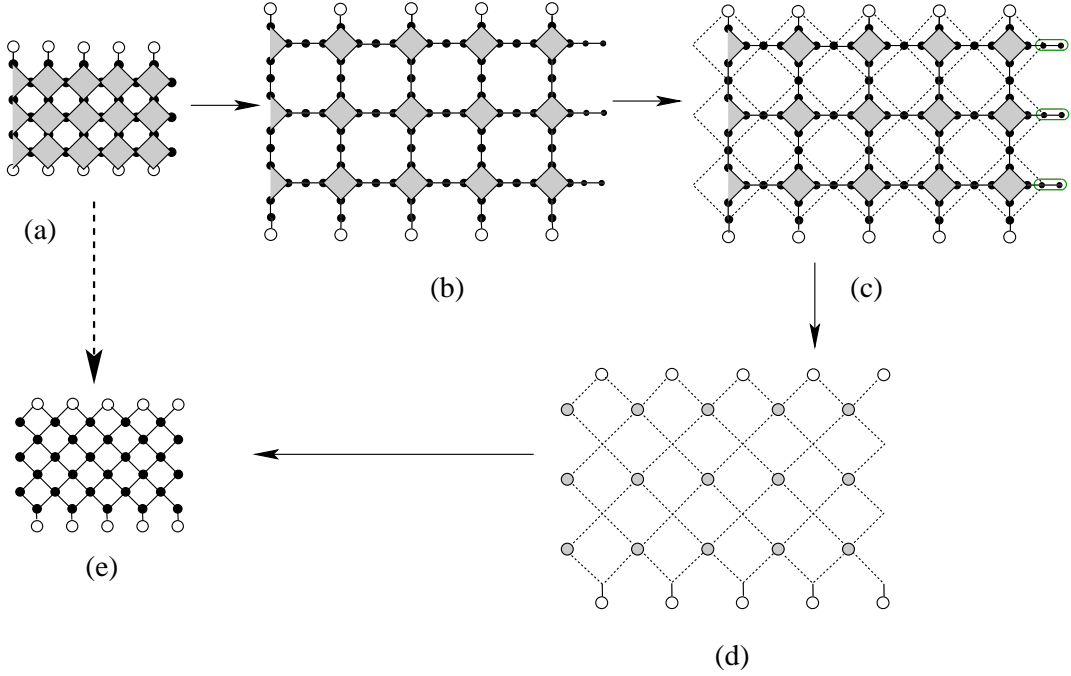}
\caption{Transformation $T_3$. The dotted edges have weight $1/2$.}
\label{LemmaT3}
\end{figure}

\begin{lem} [Transformation $T_3$]\label{T3} Let $G$ be a graph, and let $U$ be an ordered $2n$-element subset of its vertex set.

Let ${}^|LR_{m,n}$ be the graph obtained from the graph $LR_{m,n}$ by appending vertical edges to its top vertices, and ${}_|RR_{m,n}$ the graph obtained from $RR_{m,n}$ by appending vertical edges to its bottom vertices.
Then
\begin{equation}\label{T3eq1}
\M\left(G\#{}^|LR_{m,n}\right)=\M\left(G\#{}_|RR_{m,n}\right),
\end{equation}
where the connected sum acts along the top and bottom vertices of ${}^|LR_{m,n}$ and ${}_|RR_{m,n}$ (ordered from left to right and from top to bottom), and along the ordered set $U$ of vertices of $G$.
\end{lem}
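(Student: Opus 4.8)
The plan is to prove $T_3$ by the same mechanism used for Lemmas \ref{T1} and \ref{T2}: transform the graph $G\#{}^|LR_{m,n}$ step by step into $G\#{}_|RR_{m,n}$ using only the Vertex-Splitting, Spider, and Star Lemmas, and then check that the accumulated weight factor is exactly $2^0=1$ (in contrast to the factors $2^{\pm p}$ produced by $T_1$ and $T_2$). Since $LR_{m,n}$ and $RR_{m,n}$ each arise from $\AR_{m,n}$ by deleting one extreme column of vertices, the two graphs ${}^|LR_{m,n}$ and ${}_|RR_{m,n}$ are interchanged by the $180^\circ$ rotation of the underlying chessboard; I would use this rotation only as a guide for what the target looks like, since (as one sees already from the degrees of the connection vertices, e.g.\ a top pendant vertex has degree $1$ while the top vertex it must correspond to has degree $2$) the rotation reverses the left-to-right/top-to-bottom order of the $2n$ connection vertices and so does not by itself give the identification along the \emph{fixed} ordered set $U$ required by \eqref{T3eq1}.

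First I would set up the diamonds: apply the Vertex-Splitting Lemma \ref{VS} to every interior vertex of ${}^|LR_{m,n}$ that is neither a top connection (pendant) vertex, nor a bottom connection vertex, nor adjacent to one of these, exactly as in the passage from Figure \ref{LemmaT1}(a) to (b). This produces a grid of shaded four-cycles spanning the body of the rectangle. Next I would apply the Spider Lemma \ref{spider} to each diamond; each application replaces weight-$1$ edges by the dual configuration carrying weight $1/2$ (the dotted edges of Figure \ref{LemmaT3}), contributes a factor of $2$, and creates new degree-$1$ vertices whose incident edges are forced and may be removed. The crucial geometric effect to verify is that, after the forced edges are removed, the top pendant edges have migrated to the bottom and the deleted left column has become a deleted right column, so that the resulting weighted graph is precisely the ``all weights $1/2$'' version of ${}_|RR_{m,n}$, still joined to $G$ along the same ordered set $U$.

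To finish, I would apply the Star Lemma \ref{star} with factor $t=2$ at each vertex carrying weight $1/2$, restoring all weights to $1$ and producing $G\#{}_|RR_{m,n}$ on the nose. The final arithmetic is to collect exponents: the number of Spider applications (one per diamond) should match the number of Star applications (one per weight-$1/2$ vertex), so that the factor $2^{(\#\,\mathrm{diamonds})}$ from the Spider Lemma is cancelled exactly by the factor $2^{-(\#\,\mathrm{weight}\text{-}1/2\ \mathrm{vertices})}$ from the Star Lemma, giving overall factor $1$ and hence \eqref{T3eq1}.

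The main obstacle I expect is precisely this bookkeeping, in two parts. First, because $U$ is attached simultaneously along the top and the bottom of the Aztec piece (unlike in $T_1$ and $T_2$, where the attachment was one-sided), I must be sure that throughout the sequence the top connection vertices stay untouched as external vertices while the body is reorganized, and that the final left-to-right/top-to-bottom labelling of ${}_|RR_{m,n}$ agrees with the labelling of $U$ inherited from ${}^|LR_{m,n}$; this is the step whose correctness the bare rotation fails to guarantee, and it is the real content of the lemma. Second, showing that the counts of Spider and Star applications agree \emph{exactly} — which is what forces the clean factor $2^0$ and distinguishes $T_3$ from $T_1$ and $T_2$ — requires a precise enumeration of the diamonds and of the weight-$1/2$ vertices produced, and this is where the deleted column (rather than a full rectangle) must be accounted for carefully.
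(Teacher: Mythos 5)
Your proposal is correct and takes essentially the same route as the paper: the paper's own proof of Lemma \ref{T3} is just a one-line instruction to repeat the Vertex-Splitting/Spider/Star argument from the proof of Lemma \ref{T1}, guided by Figure \ref{LemmaT3}, which is precisely the sequence of steps (including the exponent bookkeeping showing the Spider and Star factors cancel to give $2^0$) that you describe. Your preliminary observation that the $180^\circ$ rotation interchanging ${}^|LR_{m,n}$ and ${}_|RR_{m,n}$ reverses the ordering of the $2n$ connection vertices, and therefore cannot by itself yield \eqref{T3eq1}, is also correct and identifies exactly why the local transformation argument is needed.
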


The above transformation is illustrated by Figures \ref{LemmaT3}(a) and (e), for $m=3$ and $n=5$. One can prove Lemma \ref{T3} by following the argument in the proof of Lemma \ref{T1}, based on Figure \ref{LemmaT3}.

A special role will be played in our proof of Theorem \ref{asymodd}  by two families of L-shaped graphs, which we describe next.

Let $a,b,c,d$ be positive integers. The graph $L^{a,b}_{c,d}$ is obtained from the baseless Aztec rectangle $AR_{a-\frac12,b}$ and $LR_{c,d}$ as follows.

\begin{figure}\centering
\includegraphics[width=11cm]{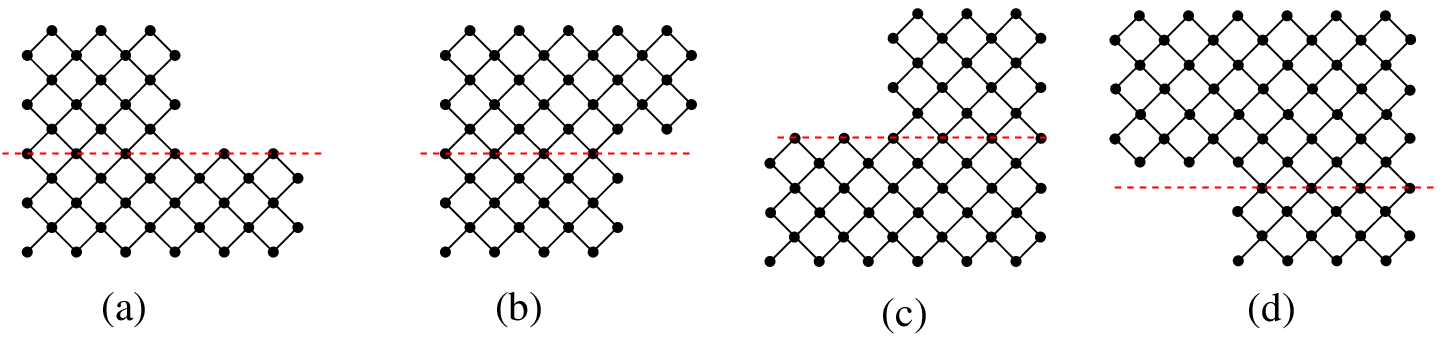}
\caption{Four L-shaped graphs (a) $L^{3,3}_{2,6}$, (b) $L^{3,5}_{2,4}$, (c) $\overline{L}^{3,3}_{3,6}$, and (c) $\overline{L}^{4,6}_{2,4}$.}
\label{Lgraph}
\end{figure}

Note that $AR_{a-\frac12,b}$ has $b+1$ vertices on the bottom, and $LR_{c,d}$ has $d$ vertices on top. If $b+1\leq d$, define
\begin{equation}\label{Lgrapheq1}
L^{a,b}_{c,d}:=AR_{a-\frac12,b}\#LR_{c,d},
\end{equation}
where the connected sum identifies the $i$th vertex on the bottom in $AR_{a-\frac12,b}$ with the $i$th vertex on top in $LR_{c,d}$ (both counted from left to right), for $i=1,\dotsc,b+1$ (the graph $L^{3,3}_{2,6}$ is shown in Figure \ref{Lgraph}(a)).

On the other hand, if $b+1>d$, define our graph by
\begin{equation}\label{Lgrapheq2}
L^{a,b}_{c,d}:=\AR^*_{a-\frac12,b}\#LR_{c,d},
\end{equation}
where $\AR^*_{a-\frac12,b}$ is the graph obtained from  $\AR_{a-\frac12,b}$ by deleting the rightmost $b+1-d$ of its bottom vertices, and the connected sum identifies the $i$th vertex on the bottom in $\AR^*_{a-\frac12,b}$ with the $i$th vertex on top in $LR_{c,d}$, for $i=1,$ $2,\dotsc,d$ ($L^{3,5}_{2,4}$ is pictured in Figure~\ref{Lgraph}(b)).

Our second family of graphs is the result of an analogous construction, in which the role of $LR_{m,n}$ is replaced by the graph $TLR_{m,n}$ obtained from it by deleting its top $n$ vertices (i.e., $TLR_{m,n}$ is obtained from $AR_{m,n}$ by deleting the $m$ leftmost and $n$ topmost vertices).

Let $a,b,c,d$ be positive integers. Note that $TLR_{c,d}$ has $d$ vertices on top (just like $LR_{c,d}$). In analogy to (\ref{Lgrapheq1}), if  $b+1\leq d$, define
\begin{equation}
\overline{L}^{\,a,b}_{c,d}:=\AR_{a-\frac12,b}\#TLR_{c,d},
\end{equation}
where the connected sum identifies the $i$th vertex on the bottom in $\AR_{a-\frac12,b}$ with the $i$th vertex on top in $LR_{c,d}$ -- but now counted from {\it right to left} -- for $i=1,\dotsc,b+1$ (the graph $\overline{L}^{\,3,3}_{3,6}$ is shown in Figure \ref{Lgraph}(c)).

Finally, if $b+1>d$, define $\overline{L}^{\,a,b}_{c,d}$ by
\begin{equation}
\overline{L}^{\,a,b}_{c,d}:=\AR^{**}_{a-\frac12,b}\#TLR_{c,d},
\end{equation}
where $\AR^{**}_{a-\frac12,b}$ is the graph obtained from  $\AR_{a-\frac12,b}$ by deleting the {\it leftmost} $b+1-d$ of its bottom vertices, and the connected sum identifies the $i$th vertex on the bottom in $\AR^{**}_{a-\frac12,b}$ with the $i$th vertex on top in $TLR_{c,d}$ (both counted from right to left), for $i=1,\dotsc,d$ ($\overline{L}^{\,4,6}_{2,4}$ is shown in Figure \ref{Lgraph}(d)).

To state our second lemma, it will be useful to introduce the following notation. Given an L-shaped graph $G$ from one of the two families defined above, define
$G_{\text{\rm bot}}$ to be the subgraph obtained from $G$ by removing all its bottommost vertices.

In the spirit of Section 3, if $G$ belongs to one of the above two families, or if it is obtained from a member of these families by deleting all its bottom vertices, we denote by ${}_|G$ (the ``combed version'' of $G$) the graph obtained from $G$ by appending a vertical edge to each of its bottommost vertices.

\begin{figure}\centering%
\includegraphics[width=14cm]{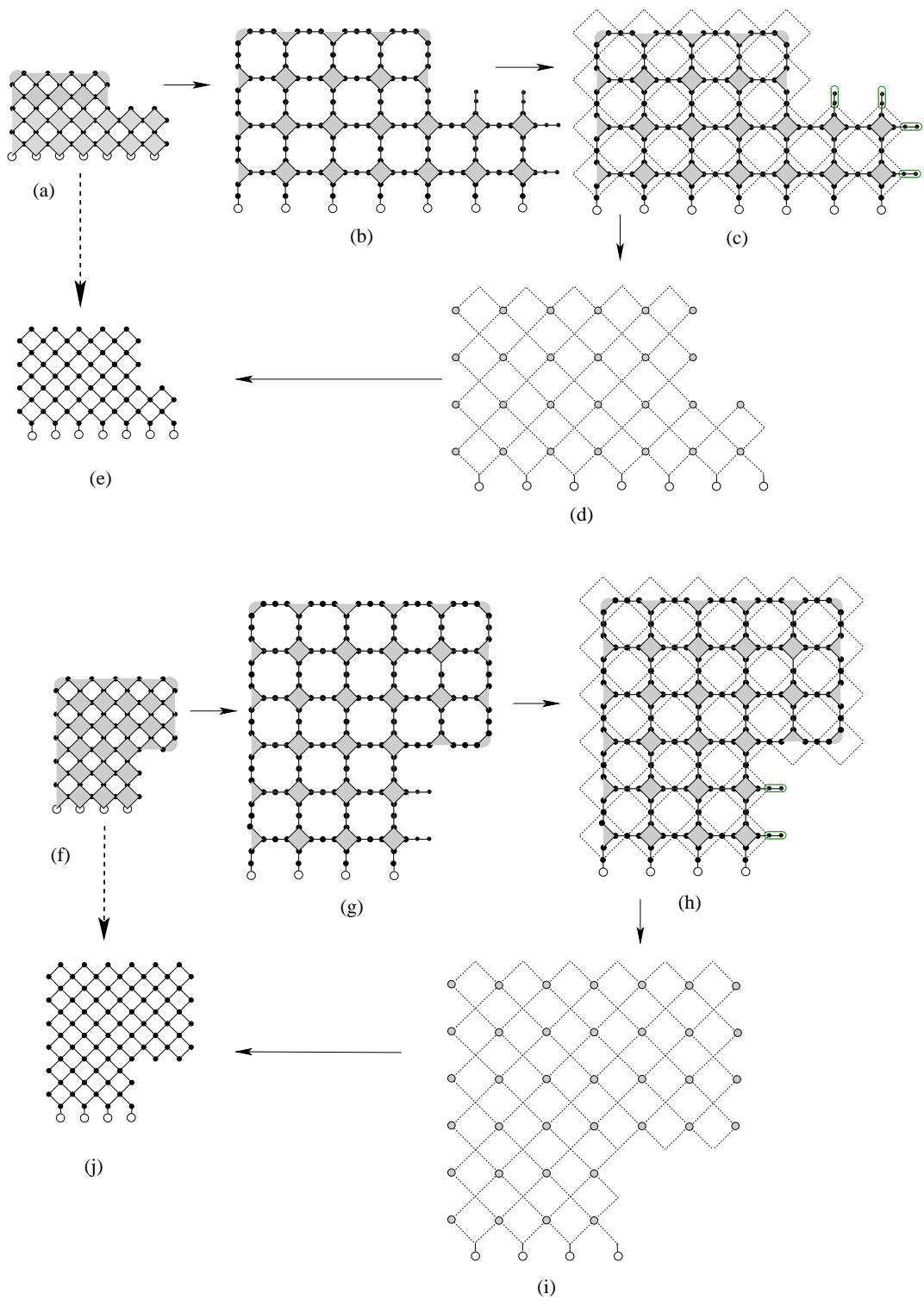}
\caption{Illustrating the L-transformations in Lemma \ref{Ltransform}(a). The dotted edges have weight $1/2$.}
\label{LemmaL}
\end{figure}

\begin{figure}\centering%
\includegraphics[width=12cm]{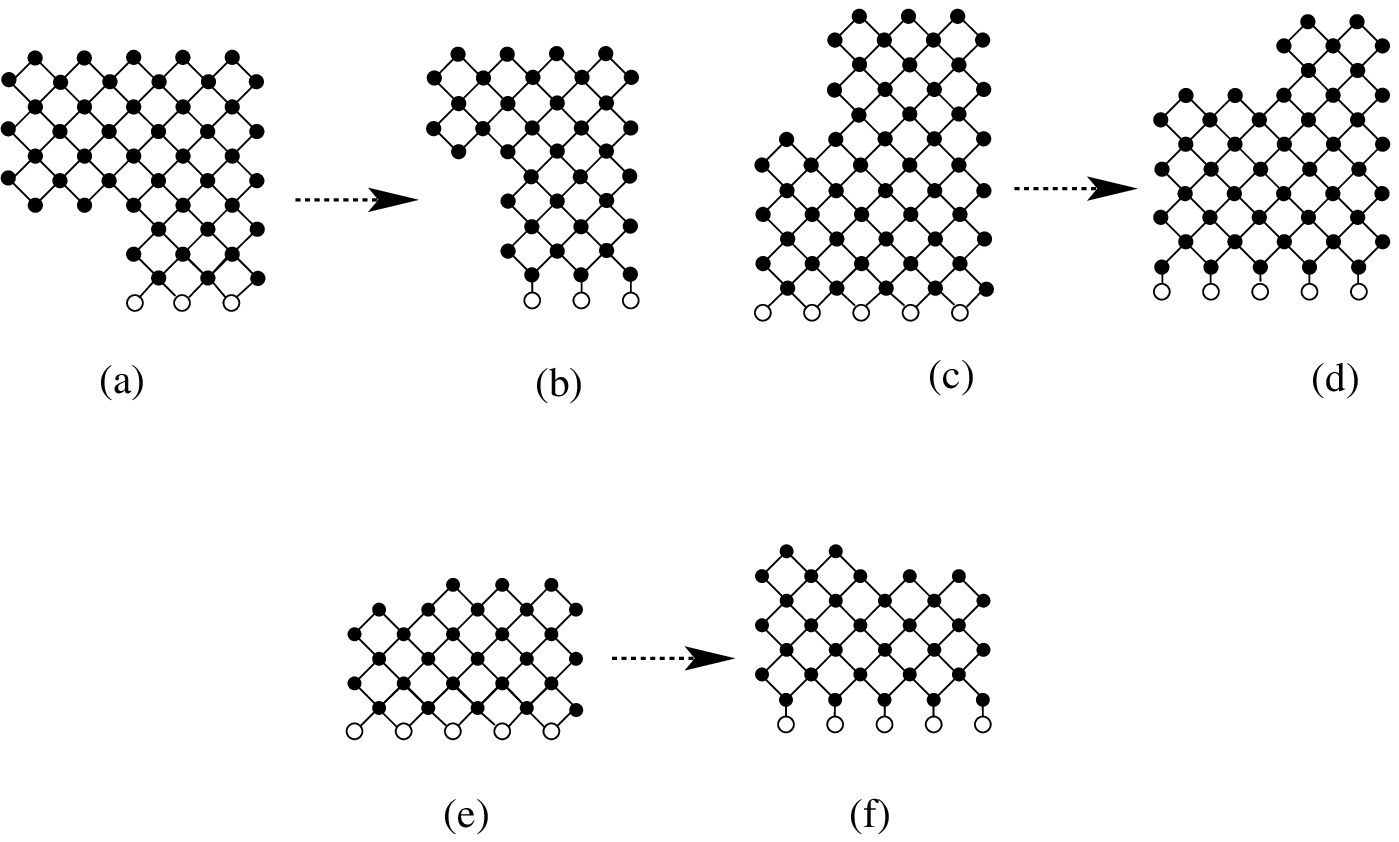}
\caption{Illustrating the L-transformations in Lemma \ref{Ltransform}(b) and (c).}
\label{LemmaL'}
\end{figure}

\begin{lem}[$L$-transformations]\label{Ltransform} Let $a$, $b$, $c$, $d$ be positive integers. Let $G$ be a graph, and let $U$ be an ordered $d$-element subset of the vertex set of $G$.

$(${\rm a}$)$. We have
\begin{equation}\label{Lgrapheq3}
\M(G\#L^{a,b}_{c,d})
=
2^{-a}
\M\left(G\#\, {}_{|}\!\!\left((L^{a+1,b+1}_{c,d})_{\text{\rm bot}}\right)\right).
\end{equation}
This transformation and its proof are illustrated in Figures \ref{LemmaL}(a)--(e), for $b+1\leq d$, and in Figures \ref{LemmaL}(f)--(j), for $b+1> d$.

$(${\rm b}$)$. If $a>1$, we have
\begin{equation}\label{Lgrapheq4}
\M(G\#\overline{L}^{\,a,b}_{c,d})
=
2^{a-1}
\M\left(G\#\, {}_{|}\!\!\left((\overline{L}^{\,a-1,b-1}_{c+1,d})_{\text{\rm bot}}\right)\right)
\end{equation}
(see Figures \ref{LemmaL'}(a) and (b) for an example when $b+1\leq d$, and see Figures \ref{LemmaL'}(c) and (d), for $b+1> d$).

$(${\rm c}$)$. If $b<d$, we have
\begin{equation}\label{Lgrapheq5}
\M(G\#\overline{L}^{\,1,b}_{c,d})
=
\M\left(G\#\, {}_{|}\!\!\left((L^{1,d-b}_{c,d})_{\text{\rm bot}}\right)\right).
\end{equation}
This is illustrated in Figures \ref{LemmaL'}(e) and~(f).

In each of the equalities (\ref{Lgrapheq3}), (\ref{Lgrapheq4}) and (\ref{Lgrapheq5}), the connected sum acts on $G$ along $U$, and on the other two summands along their bottommost vertices, ordered from left to right.
\end{lem}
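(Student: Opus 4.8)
The plan is to prove all three identities by the same local graph-surgery pipeline already used to establish the transformations $T_1$, $T_2$ and $T_3$ (Lemmas \ref{T1}, \ref{T2} and \ref{T3}): repeated applications of the Vertex-Splitting Lemma \ref{VS}, the Spider Lemma \ref{spider}, removal of forced edges, and the Star Lemma \ref{star}. The three multiplicative constants $2^{-a}$, $2^{a-1}$ and $1$ appearing in (\ref{Lgrapheq3}), (\ref{Lgrapheq4}) and (\ref{Lgrapheq5}) already signal that parts (a), (b) and (c) are the $L$-shaped analogues of $T_2$, $T_1$ and $T_3$, respectively. The essential point is that each of these operations is \emph{local}: a single application of the Spider, Vertex-Splitting or Star Lemma alters only a bounded neighborhood of one vertex or one shaded cell. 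Consequently, when we perform the surgery on the (baseless) Aztec-rectangle arm $\AR_{a-\frac12,b}$ of an $L$-shaped graph, the truncated arm (the $LR_{c,d}$ or $TLR_{c,d}$ part) together with $G$ is merely carried along as the ambient graph to which the transformation is applied, exactly as the arbitrary graph $G$ is carried along in $T_1$, $T_2$ and $T_3$.

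For part (a) I would apply the Vertex-Splitting Lemma at the interior vertices of the arm $\AR_{a-\frac12,b}$, then the Spider Lemma at each shaded cell of this arm, discard the resulting degree-one (hence forced) edges, and finally rescale with the Star Lemma; the bookkeeping of Spider factors against Star factors collapses to the net constant $2^{-a}$, exactly as in the proof of $T_2$, and comparison with Figure \ref{LemmaL} identifies the outcome with the right-hand side of (\ref{Lgrapheq3}). Part (b) is handled by the mirror-image pipeline modeled on $T_1$, producing the factor $2^{a-1}$; here the hypothesis $a>1$ is exactly what guarantees that the resulting baseless Aztec rectangle $\AR_{a-\frac32,b-1}$ has positive half-index, so that the target $\overline{L}^{\,a-1,b-1}_{c+1,d}$ is well-defined. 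For part (c), where $a=1$, the arm degenerates and the operation reduces to the weight-preserving left-right reflection transformation $T_3$ of Lemma \ref{T3}, which accounts for the absence of any power of $2$; the hypothesis $b<d$ is precisely the condition ensuring that the width parameter $d-b$ of $L^{1,d-b}_{c,d}$ is positive.

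The main obstacle is not any individual lemma application, each of which is routine, but the combinatorial verification that the graph produced by the pipeline really is the claimed target, carried out consistently across the two regimes $b+1\leq d$ and $b+1>d$ that are built into the definitions of $L^{a,b}_{c,d}$ and $\overline{L}^{\,a,b}_{c,d}$. In the overhang regime $b+1>d$, some bottom vertices of the Aztec-rectangle arm do not meet the truncated arm, and I must check that these become degree-one vertices whose incident edges are forced and carry weight $1$; removing them then leaves the weight unchanged and produces exactly the starred Aztec rectangle ($\AR^{*}$ or $\AR^{**}$) appearing in the target. A second delicate point is orientation: the family $\overline{L}^{\,a,b}_{c,d}$ identifies boundary vertices from \emph{right to left}, so in parts (b) and (c) the reflection inherent in $T_3$ and in the $TLR$ construction must be tracked carefully to match the left-to-right ordering prescribed for the connected sum along the bottommost vertices. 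Once these identifications are confirmed against Figures \ref{LemmaL} and \ref{LemmaL'}, the three equalities follow.
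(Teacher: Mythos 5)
Your proposal is correct and follows essentially the paper's own route: the paper gives no written proof of Lemma \ref{Ltransform} beyond pointing to Figures \ref{LemmaL} and \ref{LemmaL'}, which depict exactly the pipeline you describe --- Vertex-Splitting (Lemma \ref{VS}), Spider (Lemma \ref{spider}), removal of forced edges, and Star rescaling (Lemma \ref{star}) --- applied to the $L$-shaped graphs in the two regimes $b+1\leq d$ and $b+1>d$, just as in the proofs of Lemmas \ref{T1}, \ref{T2} and \ref{T3}. Your identification of parts (a), (b), (c) as the $L$-shaped analogues of $T_2$, $T_1$ and $T_3$ (with net constants $2^{-a}$, $2^{a-1}$ and $1$), and of the roles of the hypotheses $a>1$ and $b<d$, is consistent with the statement and with how the lemma is deployed in Stages 3 and 4 of the proof of Theorem \ref{asymodd}.
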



We call the portions of an asymmetric quasi-hexagon $H$ above $\ell$, below $\ell'$ and between $\ell$ and $\ell'$ \textit{the upper}, \textit{the lower} and \textit{the middle parts} of the region, respectively. We also use these terms for the corresponding parts of the dual graph $G$ of the region. It means the upper, the lower and the middle parts of $G$ are the dual graphs of the upper, the lower and the middle parts of the region $H$, respectively.  We are now ready to present the proof of our theorem.

\begin{proof} [Proof of Theorem \ref{asymodd}.] Write for short
\begin{equation*}
H=H_a(d_1,\dotsc,d_k;c_1,\dotsc,c_t;d'_1,\dotsc,d'_l).
\end{equation*}
Denote by
\begin{equation*}
G:=G_a(d_1,\dotsc,d_k;c_1,\dotsc,c_t;d'_1,\dotsc,d'_l)
\end{equation*}
the dual graph of $H$.

(a) Apply the composite transformation of Proposition \ref{composite}(b) to the upper part of $G$, and also to its lower part. This yields
\begin{equation}\label{asyproofeq1}
\M(H)=2^{C+C'-h(a+k)-h'(a'+l)}\M(H_{a+k-1}(2h-1;c_1,\dotsc,c_t;2h'-1)),
\end{equation}
where $a'$ is the length of the southeastern side of $H$.

The arguments that led to (\ref{maineq2}) still work, because the middle part of $G$  is balanced (i.e. it has the same number of vertices in the two color classes of a proper 2-coloring), and they give
\begin{equation}\label{asyproofeq2}
a+k=a'+l.
\end{equation}
Furthermore, the arguments that gave (\ref{maineq2'}) also work, because of the same reason (the middle part of $G$ is balanced). It follows that $G$ is balanced if and only if $h=h'$, which proves part (a).

(b) Assume that $h=h'$, and write for simplicity
\begin{equation*}
H_1=H_{a+k-1}(2h-1;c_1,\dotsc,c_t;2h-1).
\end{equation*}
Then by (\ref{asyproofeq2}), equation (\ref{asyproofeq1}) becomes
\begin{equation}\label{asyproofeq3}
\M(H)=2^{C+C'-2h(a+k)}\M(H_1).
\end{equation}

Consider first the first equality in (\ref{asymoddeq}), provided $h_0=0$. In this case $c_i=1$, for $1\leq i\leq t$. To prove this we use the same key idea in the proof of Theorem \ref{main} as follows. The lozenge hexagon $H_{h,a+m-n-h,h+t}$ can be viewed as an asymmetric quasi-hexagon, in which all the inter-diagonal distances $d_i$'s, $d'_j$'s and $c_s$'s are equal to one. By the same argument in the equality (\ref{maineq3}), we obtain 
\begin{equation}\label{asyproofeq3'}
\M(H_{h,a+m-n-h,h+t})=2^{-h(h-1)}\M(H_1).
\end{equation}
Therefore, the first equality in (\ref{asymoddeq}) follows from (\ref{asyproofeq3}) and (\ref{asyproofeq3'}).

Consider next  the second equality in (\ref{asymoddeq}), provided $h_0>0$. By (\ref{asyproofeq3}), to prove this case it suffices to show that
\begin{equation}\label{asyproofeq4}
\M(H_1)=2^{h(h-1)-h_0(h_0-1)}\M\left(\Gamma_{a+k-h,h_0+2h+t,h}^{h_0,h_0+h-1}\right).
\end{equation}

One readily checks that, for $H_1$, the expression $C+C'-h(2a+2k-h+1)$ in the exponent of 2 in the second equality of (\ref{asymoddeq}) becomes precisely $h(h-1)$, and thus (\ref{asyproofeq4}) follows from the case $k=l=1$ of the second equality in (\ref{asymoddeq}).


Assume therefore that $k=l=1$. We will use the graph transformations presented earlier in this section to transform the graph $G$ into $\Gamma_{a+1-h,h_0+2h+t,h}^{h_0,h_0+h-1}$. We divide this process into four stages.

\begin{figure}\centering
\includegraphics[width=10cm]{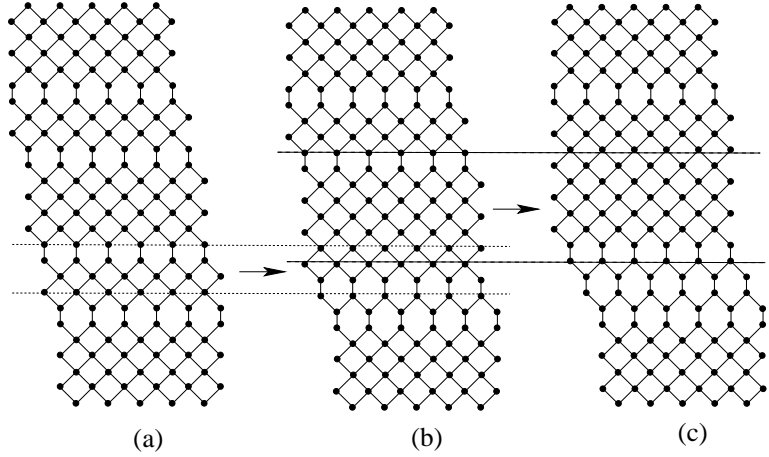}
\caption{The process in Stage 1.}
\label{middle7}
\end{figure}

\textbf{ Stage 1.} \textit{Reduction to the case $c_2=\cdots=c_t=1$.} If $c_t>1$ (and $t>1$), apply the transformation $T_3$ of Lemma \ref{T3} to the graph $G$, regarded as being obtained as a connected sum of type (\ref{T3eq1}), in which the second summand is the portion of $G$  that corresponds to the portion of $H$ in between $\ell'$ and the first diagonal above it (see Figure \ref{middle7}; we replace the subgraph between the two dotted lines in a graph by the subgraph between those lines in the next graph). One readily sees that the resulting graph is isomorphic to the graph
\begin{equation*}
G_1:=G_a(2h-1;c_1,\dotsc,c_{t-2},c_{t-1}+c_t-1,1;2h-1).
\end{equation*}
By Lemma \ref{T3}, we have $\M(G)=\M(G_1)$. If $c_t=1$, then $G\equiv G_1$ and the latter equality is obviously true. Repeating this argument, we get that $\M(G_i)=\M(G_{i+1})$ for $i=1,\dotsc,t-2$, where
\begin{equation*}
G_i:=G_a(2h-1;c_1,\dotsc,c_{t-i-1},c_{t-i}+\cdots+c_t-i,\underbrace{1,\dotsc,1}_{i};2h-1).
\end{equation*}
Thus
\begin{equation}\label{asyproofeq5}
\M(G)=\M(G_1)=\cdots\M(G_{t-1}).
\end{equation}


\begin{figure}\centering
\includegraphics[width=6cm]{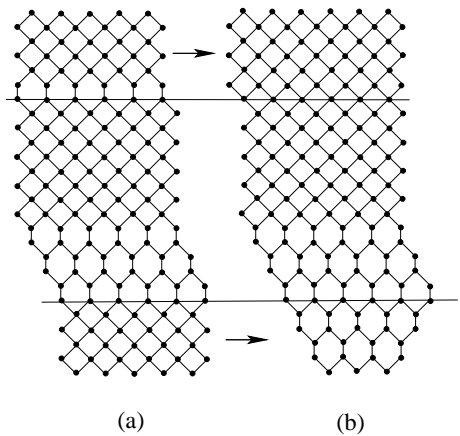}
\caption{The transformation in Stage 2.}
\label{Stage2}
\end{figure}

\textbf{Stage 2.}  \textit{Transforming $G$ into the connected sum of an Aztec rectangle with missing vertices and a half-honeycomb.} Apply the transformation $T_2$ of Lemma \ref{T2} to the part of $G_{t-1}$ above $\ell$, and then apply (in reverse) the composite transformation of Proposition~\ref{composite}(b) to replace the part of $G$ below $\ell'$ by a half-honeycomb (these transformations are illustrated in Figure \ref{Stage2}). Denote the resulting graph by $G_t$. The quoted results imply that
\begin{equation}\label{asyproofeq6}
\M(G_{t-1})=2^{-h}2^{h(h-1)/2}\M(G_t).
\end{equation}
Note that $G_t$ is not far from being a $\Gamma$-type graph --- the only difference is in how its upper part relates to an Aztec rectangle. This difference is eliminated in the remaining two stages.

\begin{figure}\centering
\includegraphics[width=12cm]{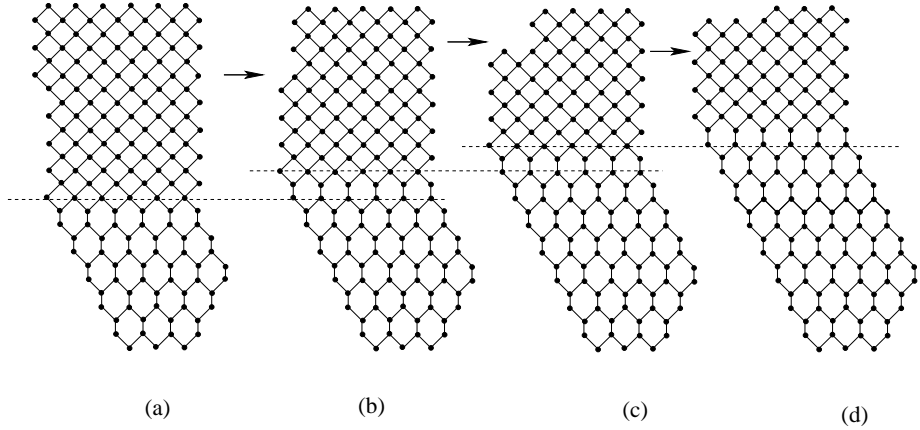}
\caption{The process in Stage 3.}
\label{middle8}
\end{figure}

\textbf{ Stage 3.}  \textit{Bringing the missing vertices of the Aztec rectangle to the top left. } This stage is illustrated in Figures \ref{middle8}(a)--(d); we replace the subgraph above the dotted line in a graph by the subgraph above this line in the next graph. Note that we have the graph isomorphism
\begin{equation}\label{asyproofeq7}
G_t\simeq\overline{L}^{\,h+1,a+1}_{h_0,a+1}\#B_{a+1-h,h+t,h}
\end{equation}
(recall that we are assuming $k=l=1$).

Apply Lemma \ref{Ltransform}(b) to the graph $G_t$ regarded as a connected sum as shown on the right hand side of (\ref{asyproofeq7}). We obtain that
\begin{equation}\label{asyproofeq8}
\M(G_t)=2^{h}\M(G_{t+1}),
\end{equation}
where
\begin{equation*}\label{asyproofeq9}
G_{t+1}\simeq\overline{L}^{\,h,a}_{h_0,a+1}\#B_{a+1-h,h+t+1,h};
\end{equation*}
shown in Figures \ref{middle8}(a) and (b).

Continue applying Lemma \ref{Ltransform}(b), and let $G_{t+i+1}$ be the graph resulting this way from $G_{t+i}$, for $i=1,\dotsc,h-1$. This yields
\begin{equation}\label{asyproofeq10}
\M(G_t)=2^{h}\M(G_{t+1})=2^{h}2^{(h-1)}\M(G_{t+2})=\cdots=2^{h(h+1)/2}\M(G_{t+h}),
\end{equation}
where
\begin{equation*}\label{asyproofeq11}
G_{t+h}\simeq\overline{L}^{\,1,a+1-h}_{h_0,a+1}\#B_{a+1-h,2h+t,h}.
\end{equation*}

\begin{figure}\centering
\includegraphics[width=14cm]{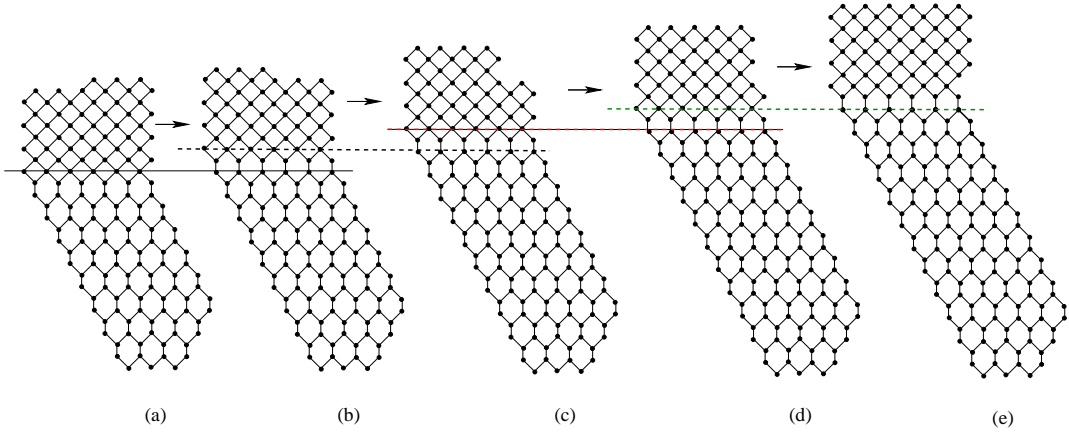}
\caption{The process in Stage 4}
\label{midnew3}
\end{figure}

\textbf{Stage 4.} \textit{Moving the missing vertices of the Aztec rectangle to the bottom right.} This stage is illustrated in Figures \ref{midnew3}(a)--(e). Apply Lemma \ref{Ltransform}(c) to the graph $G_{t+h}$. We obtain that
\begin{equation}\label{asyproofeq12}
\M(G_{t+h})=\M(G_{t+h+1}),
\end{equation}
where
\begin{equation*}\label{asyproofeq13}
G_{t+h+1}\simeq L^{1,h}_{h_0,a+1}\#B_{a+1-h,2h+t+1,h}
\end{equation*}
(see Figures \ref{midnew3}(a) and (b)).

Finally, apply Lemma 6.3(a) $h_0-1$ times to $G_{t+h+1}$ to obtain
\begin{equation}\label{asyproofeq14}
\M(G_{t+h+i})=2^{-i}\M(G_{t+h+i+1}),
\end{equation}
for $i=1,\dotsc,h_0-1$, where
\begin{equation*}\label{asyproofeq15}
G_{t+h+i}\simeq L^{i,h+i-1}_{h_0-i+1,a+1}\#B_{a+1-h,2h+t+i,h},
\end{equation*}
for $i=1,\dotsc,h_0$.

Note that for the last of these graphs we have
\begin{equation}\label{asyproofeq16}
G_{t+h+h_0}\simeq L^{h_0,h+h_0-1}_{1,a+1}\#B_{a+1-h,2h+t+h_0,h}\simeq \Gamma^{h_0,h_0+h-1}_{a+1-h,h_0+2h+t,h}.
\end{equation}
The second equality in (\ref{asymoddeq}) follows now by (\ref{asyproofeq5}), (\ref{asyproofeq6}), (\ref{asyproofeq10}), (\ref{asyproofeq12}), (\ref{asyproofeq14}) and (\ref{asyproofeq16}).

(c) From (\ref{asyproofeq3}), to prove part (c) we need to show that the region $H_1$ has no tiling when $a+k<h$, and that it has $2^{h(h-1)}$ tilings when $a+k=h$.

 Denote by $\widetilde{G}$ the dual graph of $H_1$. The top $2h-1$ rows of the vertices of $\widetilde{G}$ induce a subgraph isomorphic to $\AR_{h-1,a+m-n-1}$, and the bottom $2h-1$ rows of its vertices induce also a subgraph isomorphic to $\AR_{h-1,a+m-n-1}$. Denote by $\AR^{(1)}$ and $\AR^{(2)}$ the two Aztec rectangles (shown by the subgraph above and the subgraph below two dotted lines in Figure \ref{Ben2s}).  If $a+k<h$, then $\AR^{(1)}$ satisfies the conditions in Graph-splitting Lemma \ref{graphsplitting}(b) as an induced subgraph of $\widetilde{G}$, so $\M(\widetilde{G})=0$.

Assume that $a+k=h$, then two Aztec rectangles $\AR^{(1)}$ and $\AR^{(2)}$ are now two Aztec diamonds of order $h-1$. They satisfy the conditions in the part (a) of the Graph-splitting Lemma \ref{graphsplitting} as two induced subgraphs of $\widetilde{G}$ and $\widetilde{G}-\AR^{(1)}$, respectively.
\begin{subequations}
\begin{align}\label{power2b}
\M(\widetilde{G})&=\M(\AR^{(1)})\M(\widetilde{G}-\AR^{(1)})\\
&=\M(\AR^{(1)})\M(\AR^{(2)})\M(\widetilde{G}-\AR^{(1)}-\AR^{(2)})\\
&=2^{h(h-1)}\M(\widetilde{G}-\AR^{(1)}-\AR^{(2)}).
\end{align}
\end{subequations}
We can check that the graph $\widetilde{G}-\AR^{(1)}-\AR^{(2)}$ has a unique perfect matching (it consists of all vertical edges in the middle part of $\widetilde{G}$ when $h_0=0$, and its pattern is similar to the set of circled edges in Figure \ref{Ben2s} when $h_0>0$). Therefore,  we get $\M(\widetilde{G})=2^{h(h-1)}$ from (\ref{power2b}). This completes the proof for part (c).
\end{proof}

\begin{figure}\centering
\begin{picture}(0,0)%
\includegraphics{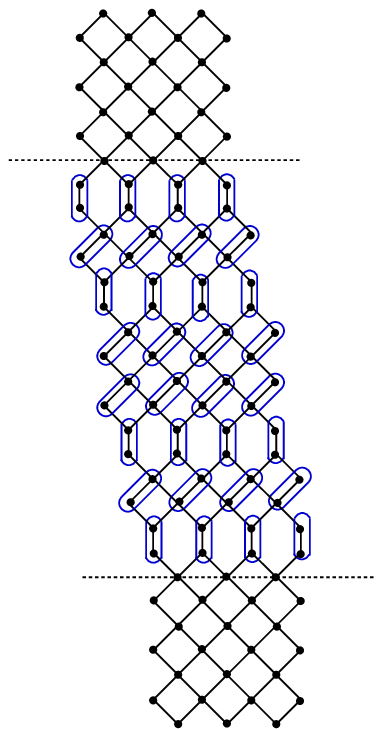}%
\end{picture}%
\setlength{\unitlength}{3947sp}%
\begingroup\makeatletter\ifx\SetFigFont\undefined%
\gdef\SetFigFont#1#2#3#4#5{%
  \reset@font\fontsize{#1}{#2pt}%
  \fontfamily{#3}\fontseries{#4}\fontshape{#5}%
  \selectfont}%
\fi\endgroup%
\begin{picture}(1927,3460)(1687,-4369)
\put(1702,-1198){\makebox(0,0)[lb]{\smash{{\SetFigFont{12}{14.4}{\rmdefault}{\mddefault}{\updefault}{$\AR^{(1)}$}%
}}}}
\put(2086,-4051){\makebox(0,0)[lb]{\smash{{\SetFigFont{12}{14.4}{\rmdefault}{\mddefault}{\updefault}{$\AR^{(2)}$}%
}}}}
\end{picture}
\caption{}
\label{Ben2s}
\end{figure}

The main reason we chose to relate quasi-hexagons to the graphs $\Gamma_{a,b,c}^{d,e}$ as in Theorem~\ref{asymodd} is that the number of perfect matchings of the latter can be written as a sum of quantities expressed by simple product formulas. This is presented in Corollary \ref{sumformulas} below, whose proof employs the following two lemmas.

The result below is due to Helfgott and Gessel (see Lemma 3 in \cite{Gessel}; see also \cite{Ciucu1}, (4.4) for a close analog).

\begin{lem}\label{Aztecdent} Label the bottom vertices of the baseless Aztec rectangle $\AR_{m-\frac12,n}$ from left to right by $1,\dotsc,n+1$, and denote by $G$ the graph obtained from it by deleting the vertices with labels in the set $\{t_1,\dotsc,t_m\}$, where $1\leq t_1<\cdots<t_{m}\leq n+1$ are given integers. Then we have
\begin{equation}
\M(G)=2^{\binom{m}{2}}\prod_{1\leq i<j\leq m}\frac{t_j-t_i}{j-i}.
\end{equation}
\end{lem}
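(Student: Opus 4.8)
The plan is to follow the Lindström--Gessel--Viennot (LGV) strategy, realizing the perfect matchings of $G$ as families of non-intersecting lattice paths and then evaluating the resulting determinant in closed product form. First I would observe that $\AR_{m-\frac12,n}$ is bipartite, with the white squares in the odd rows of the underlying board forming one color class (containing all $n+1$ bottom vertices) and those in the even rows forming the other; a direct count shows the former class exceeds the latter by exactly $m$, so deleting the $m$ bottom vertices $t_1,\dots,t_m$ is precisely what is needed to balance $G$ and make perfect matchings possible (compare the balancing condition of Lemma \ref{graphsplitting}). The retained structure then lets me route each perfect matching into a family of $m$ vertex-disjoint monotone lattice paths whose sinks are pinned at the horizontal positions $t_1<\cdots<t_m$ and whose sources $S_1,\dots,S_m$ are fixed by the (undented) left and top boundary of the baseless Aztec rectangle.

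Granting this correspondence, LGV gives $\M(G)=\det\bigl(A_{ij}\bigr)_{1\le i,j\le m}$, where $A_{ij}$ is the weighted number of single lattice paths from $S_i$ to the sink at position $t_j$, the weight recording the local two-fold choices intrinsic to the Aztec structure (the same factors of $2$ that produce the $2$'s in the Spider Lemma \ref{spider}). The combinatorial core of the argument is the evaluation of this single-path generating function: I expect $A_{ij}=2^{\,i-1}\binom{t_j-1}{\,i-1}$, the binomial counting the placements of the $i-1$ nontrivial steps among the $t_j-1$ available columns, and the power $2^{\,i-1}$ collecting the associated local weights. Pulling the factor $2^{\,i-1}$ out of the $i$-th row by multilinearity of the determinant then yields
\[
\M(G)=2^{\,0+1+\cdots+(m-1)}\det\Bigl(\tbinom{t_j-1}{\,i-1}\Bigr)_{i,j}=2^{\binom m2}\det\Bigl(\tbinom{t_j-1}{\,i-1}\Bigr)_{i,j},
\]
and the classical Lindström--Vandermonde evaluation $\det\bigl(\tbinom{t_j-1}{i-1}\bigr)_{i,j}=\prod_{1\le i<j\le m}\frac{t_j-t_i}{j-i}$ completes the proof.

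The step I expect to be the main obstacle is establishing the path correspondence together with the exact single-path weight: one must check that superimposing a perfect matching of $G$ with a fixed reference configuration produces precisely $m$ vertex-disjoint monotone paths with the asserted sources and sinks, and then track the factors of $2$ carefully enough to confirm both the exponent $i-1$ and the binomial $\binom{t_j-1}{i-1}$. Once $A_{ij}$ is identified, the determinant manipulation and the Vandermonde evaluation are entirely routine. Since this lemma is quoted from Helfgott--Gessel \cite{Gessel}, one may alternatively just cite their argument; the sketch above indicates the mechanism behind their formula.
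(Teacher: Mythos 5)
The first thing to note is that the paper contains no proof of Lemma \ref{Aztecdent}: it is quoted as a known result of Helfgott and Gessel (Lemma 3 of \cite{Gessel}), with (4.4) of \cite{Ciucu1} mentioned as a close analog. So your closing fallback --- ``one may alternatively just cite their argument'' --- is exactly what the paper does, and the only thing to compare your sketch with is the cited source, whose method (lattice paths and determinants) your outline does resemble. The routine parts of your sketch are correct: in $\AR_{m-\frac12,n}$ the color class containing the $n+1$ bottom vertices exceeds the other class by exactly $m$ (the rows alternate between $n+1$ and $n$ vertices, with $m$ rows of each kind), so deleting $m$ bottom vertices is necessary for a perfect matching to exist; the evaluation $\det\bigl(\binom{t_j-1}{i-1}\bigr)_{i,j=1}^{m}=\prod_{1\le i<j\le m}\frac{t_j-t_i}{j-i}$ is standard (each $\binom{x}{i-1}$ is a degree-$(i-1)$ polynomial with leading coefficient $1/(i-1)!$); and pulling $2^{i-1}$ out of row $i$ gives $2^{\binom m2}$. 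One can also verify on small cases (e.g.\ $m=2$, $n=2$, where direct counting of matchings gives $2$, $4$, $2$ for the three dent sets) that $\det\bigl(2^{i-1}\binom{t_j-1}{i-1}\bigr)$ returns the right numbers, so your candidate matrix is at least consistent with the formula.

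The genuine gap is the step you yourself flag, and it is not a minor one: nothing in the proposal establishes either (i) a bijection between perfect matchings of $G$ and families of $m$ vertex-disjoint paths with the asserted sources and sinks, or (ii) the entry formula $A_{ij}=2^{i-1}\binom{t_j-1}{i-1}$; these two items \emph{are} the combinatorial content of the lemma, everything else being bookkeeping. Two concrete warnings about the route you indicate. First, the superposition idea (overlaying $M$ with a fixed reference configuration) does not directly produce an LGV setup here: since the undeleted graph is unbalanced, the reference must be a near-perfect matching, and the symmetric difference $M\,\triangle\, M_0$ consists of $m$ alternating paths \emph{together with alternating cycles}; the cycles destroy injectivity of the matching-to-paths map, so one needs instead the direct domino-to-path encoding (Schr\"oder/Delannoy type), in which the path family determines the tiling outright. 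Second, attributing the factors of $2$ to ``weights \ldots the same factors of $2$ that produce the $2$'s in the Spider Lemma'' is conceptually off: $G$ is an unweighted graph and $\M(G)$ is a plain count, so the $2^{i-1}$ must arise as genuine path multiplicities in the digraph you construct (e.g.\ two flavors of each rising step), not as edge weights --- unless you actually perform weighted graph transformations first and track them, which would be a different and longer argument. Until the step set is specified, the bijection proved, and $A_{ij}$ computed from it, what you have is a plausible plan pointing at the Helfgott--Gessel mechanism, not a proof.
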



The next result is due to Cohn, Larsen and Propp (see \cite{Cohn}, Proposition 2.1), see also Lemma 2 in \cite{Gessel}.

\begin{lem}\label{Hexdent} Consider the dual of the bottom half of a lozenge hexagon  of side-lengths $b$, $a$, $a$, $b$, $a$, $a$ (clockwise from top) on the triangular lattice. Label its topmost vertices from left to right by $1,\dotsc,a+b$, and the number of perfect matchings of the graph obtained from it by removing the vertices with labels in the set $\{r_1,\dotsc,r_{a}\}$ is equal to
\begin{equation}\label{hexdenteq}
\prod_{1\leq i<j\leq a}\frac{r_j-r_i}{j-i},
\end{equation}
where $1\leq r_1<\cdots<r_{a}\leq a+b$ are given integers.
\end{lem}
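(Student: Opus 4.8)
The plan is to recast the perfect-matching count as a count of families of non-intersecting lattice paths, use the Lindström–Gessel–Viennot (LGV) lemma to write it as a determinant of binomial coefficients, and then evaluate that determinant as a Vandermonde-type product equal to the right-hand side of (\ref{hexdenteq}). Write $G$ for the graph in the statement.

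First I would invoke the tiling/matching duality to identify perfect matchings of $G$ with lozenge tilings of the trapezoidal region $R$ --- the bottom half of the hexagon --- from which the $a$ up-pointing unit triangles along the top edge indexed by $r_1<\dots<r_a$ have been deleted (the remaining $b$ top triangles, together with the interior of $R$, must be covered). I would then apply the standard bijection between lozenge tilings of such a semihexagon and families of non-intersecting monotone lattice paths, obtained by tracing the lozenges of a single fixed orientation. There are exactly $a$ paths; their variable endpoints are pinned to the positions $r_1,\dots,r_a$ of the deleted triangles, while their other endpoints are the fixed lattice points staggered along a slanted leg of $R$. The staggering of these fixed endpoints is what makes the $i$-th path traverse one more row than the $(i-1)$-th.

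With the path model in hand, the LGV lemma gives $\M(G)=\det(M_{ij})_{1\le i,j\le a}$, where $M_{ij}$ counts the monotone paths from the $i$-th fixed endpoint to the sink at position $r_j$; the compatible linear order of the two endpoint families forces every non-identity permutation to contribute crossing (hence cancelling) families, so the determinant computes the matching number on the nose. Each entry $M_{ij}$ is a single binomial coefficient of the form $\binom{r_j+c_i}{\,i-1\,}$, and hence a polynomial in $r_j$ of degree $i-1$ with leading coefficient $1/(i-1)!$. The convenient structural fact is that the determinant of a matrix whose $i$-th row consists of the values $P_i(r_1),\dots,P_i(r_a)$ of a degree-$(i-1)$ polynomial depends only on the leading coefficients of the $P_i$: reducing each row to the monomial basis is a triangular change of basis of determinant $\prod_{i}1/(i-1)!$, after which one is left with the Vandermonde matrix $(r_j^{\,i-1})$. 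Therefore
\begin{equation*}
\M(G)=\Bigg(\prod_{i=1}^{a}\frac{1}{(i-1)!}\Bigg)\prod_{1\le i<j\le a}(r_j-r_i)=\prod_{1\le i<j\le a}\frac{r_j-r_i}{j-i},
\end{equation*}
where the final equality uses the identity $\prod_{i=1}^{a}(i-1)!=\prod_{1\le i<j\le a}(j-i)$. This is precisely (\ref{hexdenteq}).

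The main obstacle is the geometric bookkeeping of the second step: one must set up the path model so that the deleted top triangles become the $a$ variable endpoints while the fixed endpoints are correctly staggered along a leg of $R$, guaranteeing that the LGV entries emerge as binomial coefficients of degrees $0,1,\dots,a-1$ in the $r_j$. Once this is verified, the LGV application and the determinant evaluation are routine; moreover, the final product is insensitive to the precise lower-order form of the path counts, since only their leading coefficients enter the answer.
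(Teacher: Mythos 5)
Your proposal is correct, but there is nothing in the paper to compare it against: the paper does not prove Lemma \ref{Hexdent} at all. It is quoted as a known result of Cohn, Larsen and Propp (Proposition 2.1 of \cite{Cohn}), with Lemma 2 of \cite{Gessel} cited as an alternative source, and is then used as a black box in the proof of Corollary \ref{sumformulas}. Your blind reconstruction is essentially the standard proof of this classical ``dented semihexagon'' formula, close in spirit to the cited sources: identify matchings with lozenge tilings, convert to non-intersecting paths, apply Lindstr\"om--Gessel--Viennot, and evaluate the resulting binomial determinant. The finish is sound and robust as you set it up: with the fixed endpoints staggered one per row along a leg, the entry counting paths from the $i$-th fixed endpoint to the dent at position $r_j$ comes out to $\binom{r_j-1}{i-1}$ (your $\binom{r_j+c_i}{i-1}$), a polynomial in $r_j$ of degree exactly $i-1$ with leading coefficient $1/(i-1)!$; your observation that the determinant depends only on these leading coefficients (triangular row reduction to the Vandermonde matrix), together with $\prod_{i=1}^{a}(i-1)!=\prod_{1\le i<j\le a}(j-i)$, correctly yields (\ref{hexdenteq}). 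Two cosmetic points, neither of which affects the argument: the cells dual to the topmost vertices are \emph{down}-pointing triangles (their horizontal edge lies on the top boundary), not up-pointing ones --- and removing $a$ of them is precisely what balances the region, since down-pointing cells outnumber up-pointing cells by $a$; and the LGV ``compatibility'' you invoke does hold here because the sources and sinks sit on the outer face of a planar network in compatible order, so any family realizing a non-identity permutation must contain two intersecting paths.
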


Denote by $V_{a,b}(r_1,\dotsc,r_{a})$ the product in (\ref{hexdenteq}), where $1\leq r_1<\cdots<r_{a}\leq a+b.$

\begin{cor}\label{sumformulas} $(${\rm a}$)$. If $e\neq c+d-1$, then $\M(\Gamma_{a,b,c}^{d,e})=0$.

 $(${\rm b}$)$. If $d\leq a$, then
\begin{equation}
\M\left(\Gamma_{a,b,c}^{d,c+d-1}\right)=2^{\binom{d}{2}}\sum V_{b,a}(A\cup\{a+c+1,\dots,a+b\}) V_{d,c}(B),
\end{equation}
where the sum is taken over all pairs of disjoint sets $A$ and $B$ whose union is $\{1,\dots,c+d\}$ and whose cardinalities are given by $|A|=c$ and $|B|=d$.

 $(${\rm c}$)$. If $d< a$, then
\begin{align}
&\M\left(\Gamma_{a,b,c}^{d,c+d-1}\right)=2^{\binom{d}{2}}\notag\\
&\quad\times\sum V_{b,a}(A\cup\{a+c+1,\dots,a+b\}) V_{d,c}(B\cup\{c+a+1,\dots,c+d\}),
\end{align}
where the sum is taken over all pairs of disjoint sets $A$ and $B$ whose union is $\{1,\dots,c+a\}$ and whose cardinalities are given by $|A|=c$ and $|B|=a$.
\end{cor}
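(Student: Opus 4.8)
The plan is to feed the two ``dent'' enumerations of Lemmas~\ref{Aztecdent} and~\ref{Hexdent} through the connected-sum structure of $\Gamma_{a,b,c}^{d,e}$. The guiding principle is that a connected sum $X\#Y$ is formed only by identifying the ordered boundary vertices and introduces no new edges; hence every edge of a perfect matching lies inside a single summand, and each shared (identified) vertex is covered on exactly one of the two sides. A perfect matching is therefore equivalent to the data of which shared vertices are matched on which side, together with a matching of each side separately. Writing $\mathcal{A}$ for the Aztec-rectangle summand ($\AR_{d-\frac12,e}$, or its truncation $\AR^*_{d-\frac12,e}$ when $a+c<e+1$), letting $A$ denote the set of shared vertices matched on the $\mathcal{A}$-side (hence \emph{deleted} from $B_{a,b,c}$) and $B$ its complement (matched on the honeycomb side, hence deleted from $\mathcal{A}$), I would record the decomposition
\[
\M\!\left(\Gamma_{a,b,c}^{d,e}\right)=\sum_{A\sqcup B}\M\!\left(B_{a,b,c}\setminus A\right)\,\M\!\left(\mathcal{A}\setminus B\right),
\]
the sum running over all splittings of the shared vertex set.

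Part (a) I would dispose of first by a bipartite color count. The baseless rectangle $\AR_{d-\frac12,e}$ has an excess of $d$ vertices of the (monochromatic) color of its bottom row, and $B_{a,b,c}$ has an excess of $c$ vertices of the color of its top row; since the connected sum identifies those two boundary classes, a short computation shows that the two color classes of $\Gamma_{a,b,c}^{d,e}$ differ by exactly $(c+d)-(e+1)$, in both the full and truncated cases. Thus the graph is balanced only when $e=c+d-1$, and an unbalanced bipartite graph has no perfect matching, giving $\M=0$ otherwise. The same count shows that in the displayed sum every term vanishes unless $|A|=c$ (so $B_{a,b,c}\setminus A$ is balanced) and $|B|=d$ in the full case, respectively $|B|=a$ in the truncated case (so $\mathcal{A}\setminus B$ is balanced); only these terms survive.

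Next I would evaluate the two factors. For the Aztec factor, Lemma~\ref{Aztecdent} gives $\M(\AR_{d-\frac12,c+d-1}\setminus B)=2^{\binom{d}{2}}V_{d,c}(B)$ whenever the deleted set has size $d$. When $d\le a$ the summand is the full rectangle with shared set $\{1,\dots,c+d\}$, so this applies verbatim with $|B|=d$; when $d>a$ (the regime of part (c), where $\AR^*$ has the bottom vertices labelled $c+a+1,\dots,c+d$ already removed) the total deleted set is $B\cup\{c+a+1,\dots,c+d\}$, of size $a+(d-a)=d$, so Lemma~\ref{Aztecdent} yields the factor $2^{\binom{d}{2}}V_{d,c}(B\cup\{c+a+1,\dots,c+d\})$. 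For the honeycomb factor I need the identity
\[
\M\!\left(B_{a,b,c}\setminus A\right)=V_{b,a}\!\left(A\cup\{a+c+1,\dots,a+b\}\right),\qquad |A|=c .
\]
This is the heart of the argument: starting from the bottom half $P$ of the lozenge hexagon of sides $a,b,b,a,b,b$ (whose $a+b$ top vertices are enumerated by $V_{b,a}$ via Lemma~\ref{Hexdent}), I would delete the $b-c$ rightmost top vertices $a+c+1,\dots,a+b$ and remove the lozenges these deletions force. A direct check should show that the forced region peels away cleanly and that what remains is exactly $B_{a,b,c}$, with its surviving top vertices still labelled $1,\dots,a+c$ from left to right. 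Granting this reduction, deleting a further set $A\subseteq\{1,\dots,a+c\}$ from $B_{a,b,c}$ corresponds to deleting $A\cup\{a+c+1,\dots,a+b\}$ from $P$, and Lemma~\ref{Hexdent} supplies the stated value.

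Finally I would assemble the pieces. Substituting the two evaluations and pulling out $2^{\binom{d}{2}}$ gives, for $d\le a$,
\[
\M\!\left(\Gamma_{a,b,c}^{d,c+d-1}\right)=2^{\binom{d}{2}}\sum V_{b,a}\!\left(A\cup\{a+c+1,\dots,a+b\}\right)V_{d,c}(B)
\]
over partitions $A\sqcup B=\{1,\dots,c+d\}$ with $|A|=c$, $|B|=d$, which is part (b); the same substitution in the regime $d>a$ inserts the block $\{c+a+1,\dots,c+d\}$ into the second factor and changes the summation range to $\{1,\dots,c+a\}$ with $|A|=c$, $|B|=a$, which is part (c) (the case split being dictated by the two definitions of $\Gamma$ via $a+c\ge e+1$ versus $a+c<e+1$). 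The main obstacle is the honeycomb reduction identity of the previous paragraph: verifying that deleting the rightmost $b-c$ top vertices of $P$ forces \emph{precisely} the lozenges whose removal returns $B_{a,b,c}$, and that this respects the left-to-right labelling, is the one genuinely geometric step, and it must be done carefully so that the two hexagon shapes $a,b,b,\dots$ and $a,2b-c,c,\dots$ are matched correctly.
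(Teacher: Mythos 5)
Your proposal is correct and follows essentially the same route as the paper: partition the perfect matchings of $\Gamma_{a,b,c}^{d,e}$ according to whether each shared vertex is matched into the Aztec part or the honeycomb part, deduce part (a) from bipartite balance, and evaluate the two resulting factors via Lemmas \ref{Aztecdent} and \ref{Hexdent}. The one step you flag as delicate --- that deleting the rightmost $b-c$ top vertices of the half-hexagon of sides $a,b,b,a,b,b$ forces a corner of vertical lozenges whose removal leaves exactly $B_{a,b,c}$ --- is precisely what the paper handles (in the opposite direction, by \emph{adding} the forced vertical edges circled in its Figure \ref{Cfigure}), so your more explicit verification of this point is a faithful elaboration rather than a departure.
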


\begin{proof} Consider the vertices of $\Gamma_{a,b,c}^{d,e}$ that belong both to the baseless Aztec rectangle part and to the half-honeycomb part (these are the vertices on the dotted line in Figure \ref{Cfigure}). In any given perfect matching, some of them are matched upward, and the remaining ones are matched downward. Let $k$ be the number of vertices belonging to both parts, and partition the set of perfect matchings of $\Gamma_{a,b,c}^{d,e}$ into $2^k$ classes corresponding to all the possible choices for each of these vertices to be matched upward or downward. Each class is then the set of perfect matchings of a disjoint union of two graphs, the top one being of the kind in Lemma \ref{Aztecdent}, and the bottom one (after adding some vertical forced edges that are shown as circled ones in Figure \ref{Cfigure}) being of the kind in Lemma \ref{Hexdent}. Part (a) follows from the requirement that these (bipartite) graphs are balanced, while parts (b) and (c) follow from Lemmas \ref{Aztecdent} and \ref{Hexdent}.
\end{proof}

We end this section by presenting the general version of Theorem \ref{asymodd}, corresponding to the case when the distances between successive diagonals are arbitrary (and not necessarily odd). To state the general result we need some more definitions.

We define the upper and the lower heights of the region to be the number of rows of regular black cells above $\ell$ and the number of rows of regular white cells below $\ell'$, respectively. The diagonals divide the middle part of an asymmetric quasi-hexagon into parts, called \textit{middle layers}.

\begin{rmk}
In the case of odd distances (as in Theorem \ref{asymodd}), the bottom row of cells is black. However, we face the same problem in investigating symmetric quasi-hexagons: The bottom row of cells may be white. By the same argument that we used in the proof of Theorem \ref{main}(a), when the latter happens the region has no tilings. Thus, we assume from now on that the bottom row of cells is black.
\end{rmk}

\begin{figure}\centering
\includegraphics[width=11cm]{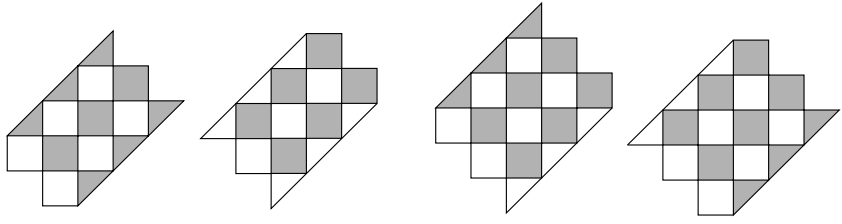}
\caption{From the left: a left-even middle layer, a right-even middle layer, a left-odd middle layer, and a right-odd middle layer.}
\label{middle1}
\end{figure}

Let $H:=H_{a}(d_1,\dotsc,d_k; c_1,\dotsc,c_t; d'_1,\dotsc, d'_l)$. Define the function
\[\phi_H:\{1,\dotsc,t\}\to\{-1,0,1\}\] by
\begin{equation*}
\phi_H(j)=
\begin{cases}
1& \text{\rm if $j$th middle layer is right-odd,}\\
-1& \text{\rm if $j$th middle layer is left-odd,}\\
1&\text{\rm otherwise,}
\end{cases}
\end{equation*}
where the four possible types of middle layers are illustrated in Figure \ref{middle1}.

Define the \textit{slope} of the asymmetric quasi-hexagon by
\begin{equation*}
\Phi(H)=\sum_{j=1}^t\phi_H(j).
\end{equation*}
Set
\begin{equation*}
h_0=\sum_{j=1}^t(c_i-\phi_H(j))/2.
\end{equation*}
Since the middle part is balanced, one can verify that the numbers of black and white vertices in the dual graph of $H$ are equal if and only if the upper and lower heights are equal. The same arguments that we used to prove Theorem \ref{asymodd} can be used to obtain the following result.

\begin{thm}\label{asymmetric1} Let $H:=H_a(d_1,\dotsc,d_k;c_1,\dotsc,c_t;d'_1,\dotsc,d'_l)$ be an asymmetric quasi-hexagon. Denote by $C$ the number of regular black cells above $\ell$, and by $C'$ the number of regular white cells below $\ell'$, and assume the upper and lower heights of $H$ are both equal to $h$. Let $m$ be the number of rows of regular black triangular cells above $\ell$, and $n$ be the number of rows of irregular black triangular cells above $\ell$. Then we have
\begin{align}
&\M(H)=2^{C+C'-h(2q-h+1)} \notag\\
&\quad\times
\begin{cases}
2^{-h_0(h_0-1)/2}\M\left(\Gamma^{h_0,h+h_0-1}_{q-h,h_0+\Phi(H)+2h,h}\right),&\text{if $h_0>0$ and $q>h$,}\\
\M(H_{h,q-h,h+t}),&\text{if $h_0=0$ and $q>h$,}\\
1, &\text{if  $q=h$,}\\
0, &\text{if  $q<h$,}
\end{cases}
\end{align}
where $q=a+m-n$.
\end{thm}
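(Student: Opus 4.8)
The plan is to run the proof of Theorem~\ref{asymodd} essentially verbatim, isolating the one place where even inter-diagonal distances force genuinely new bookkeeping, namely the treatment of the middle layers. First I would pass to the dual graph $G$ of $H$ and apply the composite transformation of Proposition~\ref{composite}(b) separately to the upper part (above $\ell$) and the lower part (below $\ell'$); this is legitimate because the bottom row of cells is black by the Remark preceding the statement. Analogously to (\ref{asyproofeq1}), this replaces the upper and lower parts by single baseless Aztec rectangles (of parameter $2h-1$ on each side, since both heights equal $h$), leaving the middle part untouched. The width $q=a+m-n$ of the bottom row of the upper part resting on $\ell$ equals, by the balancing of the middle part and the argument giving (\ref{maineq2}) and (\ref{asyproofeq2}), the analogous width of the lower part; and the balancing argument of (\ref{maineq2'}) shows the whole graph is balanced precisely when the upper and lower heights coincide, which we are assuming. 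Thus everything reduces to computing $\M(H_1)$, where $H_1$ is the region whose upper and lower parts are single layers of distance $2h-1$ and whose middle part is unchanged, with prefactor $2^{C+C'-h(2q-h+1)}$.

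The heart of the argument is to transform the middle part into the connected sum of a (possibly vertex-deleted) baseless Aztec rectangle with a half-honeycomb, exactly as in Stages~1--4 of Theorem~\ref{asymodd}, but now allowing even distances $c_j$. I would process the middle layers one at a time, using the slide transformation $T_3$ of Lemma~\ref{T3} to bring each layer into standard position and then $T_2$ of Lemma~\ref{T2} on an odd layer, or $T_1$ of Lemma~\ref{T1} on an even layer, according to its parity. Each layer contributes a signed horizontal displacement to the interface between the Aztec-rectangle part and the honeycomb part; this displacement, read off from the four layer types of Figure~\ref{middle1}, is exactly $\phi_H(j)$, and summing over the layers produces the slope $\Phi(H)$. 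The slope fixes the honeycomb width parameter $h_0+\Phi(H)+2h$, while the number of rows absorbed into the Aztec-rectangle part accumulates to $h_0=\sum_j(c_j-\phi_H(j))/2$, the deletion parameter of the $\Gamma$-graph.

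With the middle part reduced, I would turn the lower part into a half-honeycomb by running the composite transformation of Proposition~\ref{composite}(b) in reverse and applying $T_2$ to the upper part, as in Stage~2 (giving the factors in (\ref{asyproofeq6})), so that the resulting graph is an $\overline{L}$-type graph sitting atop a half-honeycomb. I would then mirror Stages~3 and~4: repeated application of Lemma~\ref{Ltransform}(b) migrates the deleted vertices of the Aztec rectangle to the top left, contributing the powers of $2$ that assemble as in (\ref{asyproofeq10}); Lemma~\ref{Ltransform}(c) slides them across the bottom; and $h_0-1$ applications of Lemma~\ref{Ltransform}(a) bring the graph into the canonical form $\Gamma^{h_0,h_0+h-1}_{q-h,\,h_0+\Phi(H)+2h,\,h}$, producing the residual factor $2^{-h_0(h_0-1)/2}$. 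Collecting all factors yields the $h_0>0$, $q>h$ branch. When $h_0=0$ the Aztec-rectangle part is empty and the middle part is a genuine lozenge sub-hexagon, so the identity (\ref{asyproofeq3'}) used in Theorem~\ref{asymodd} collapses $\M(H_1)$ directly to a multiple of $\M(H_{h,q-h,h+t})$, giving the $h_0=0$, $q>h$ branch.

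Finally, the two degenerate branches follow part~(c) of the proof of Theorem~\ref{asymodd} with no change. When $q\le h$, the top and bottom $2h-1$ rows of the dual graph of $H_1$ each induce a copy of $\AR_{h-1,q-1}$; for $q<h$ the upper copy is an unbalanced induced subgraph satisfying the separating condition, so Graph-splitting Lemma~\ref{graphsplitting}(b) forces $\M(H_1)=0$, while for $q=h$ the two copies are Aztec diamonds of order $h-1$ that split off by Lemma~\ref{graphsplitting}(a), leaving a unique matching on a set of forced vertical edges and hence $\M(H_1)=2^{h(h-1)}$, which cancels the prefactor down to the constant $1$. The main obstacle I anticipate is not any individual transformation but the calibration of the displacement function $\phi_H$ across all four layer types simultaneously: one must check that even and right-odd layers contribute the same interface displacement while differing in their contribution to $h_0$, so that the honeycomb width $h_0+\Phi(H)+2h$ and the deletion parameter $h_0$ of the Aztec rectangle emerge consistently no matter how even and odd layers are interleaved.
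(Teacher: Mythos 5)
Your overall skeleton is the same as the paper's: the paper proves this theorem precisely by rerunning the argument of Theorem~\ref{asymodd} (with the middle-layer bookkeeping encoded in $\phi_H$, $\Phi(H)$ and $h_0$), and your stages and degenerate cases track that argument. However, there is a genuine gap at your very first step. You apply Proposition~\ref{composite}(b) unconditionally to the upper and lower parts, justified by the Remark that ``the bottom row of cells is black.'' That Remark concerns the bottommost row of the whole region $H$; what decides between parts (a) and (b) of Proposition~\ref{composite} is a \emph{different} row, namely the row of triangular cells resting on $\ell$ (for the upper part) and on $\ell'$ (for the lower part, after flipping and swapping colors). With arbitrary parities of the $d_i$ these colors are not forced: the row resting on $\ell$ is white exactly when $\sum_{i=1}^k(d_i+1)$ is odd (for instance $d_1$ even and $d_2,\dotsc,d_k$ odd), and this is perfectly compatible with the bottom row of $H$ being black. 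In that situation Proposition~\ref{composite}(b) does not apply at all; one must use Proposition~\ref{composite}(a), which produces the Aztec rectangle $\AR_{h,q}$ rather than a baseless one, and then convert ${}_|\AR_{h,q}$ into $\AR_{h-\frac12,q-1}$ by the Vertex-splitting Lemma~\ref{VS} followed by transformation $T_1$ of Lemma~\ref{T1} --- exactly the maneuver in the second half of the proof of Theorem~\ref{main} (see (\ref{maineq4}) and the reduction that follows it). The extra factor $2^{h}$ arising from $T_1$ is what makes the uniform exponent $C+C'-h(2q-h+1)$, stated in terms of \emph{regular} cells, come out right; as written, your derivation is valid only when the rows resting on $\ell$ and $\ell'$ happen to be black, i.e.\ your proof silently reimposes a parity restriction that the theorem is designed to remove.

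A secondary problem lies in your treatment of the middle layers. You propose to handle each middle layer by a $T_3$ slide followed by $T_2$ (odd layer) or $T_1$ (even layer). But Lemmas~\ref{T1} and~\ref{T2} require the subgraph being replaced to be a full combed (baseless) Aztec rectangle glued to the rest of the graph along \emph{all} of its bottom vertices; this is the structure of layers of the upper and lower parts, not of middle layers, whose dual graphs are the one-sided truncations of type $LR_{m,n}$, $RR_{m,n}$, $TLR_{m,n}$. The middle part must instead be handled by $T_3$-type slides and the $L$-transformations of Lemma~\ref{Ltransform}, as in Stages 1--4 of the proof of Theorem~\ref{asymodd}, and for even middle layers one needs analogues of $T_3$ matching the four layer types of Figure~\ref{middle1}; it is in proving those analogues that the values $\phi_H(j)$, the slope $\Phi(H)$, and the deletion count $h_0=\sum_j(c_j-\phi_H(j))/2$ are actually forced, rather than calibrated after the fact. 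Note also that your closing claim that even and right-odd layers ``contribute the same interface displacement'' is suspect: since each summand $(c_j-\phi_H(j))/2$ of $h_0$ must be an integer, an even layer must contribute an even value of $\phi_H$ (namely $0$, the one value of the declared codomain $\{-1,0,1\}$ the paper's case list never uses), not the value $1$ assigned to right-odd layers.
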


\thispagestyle{headings}

\end{document}